\theoremstyle{plain}
\newtheorem{theorem}{Theorem}
\newtheorem*{theorem*}{Theorem}
\newtheorem*{corollary*}{Corollary}
\newtheorem{lemma}[theorem]{Lemma}
\newtheorem{proposition}[theorem]{Proposition}
\newtheorem{corollary}[theorem]{Corollary}
\newtheorem{conjecture}[theorem]{Conjecture}
\theoremstyle{remark}
\newtheorem*{remark}{Remark}
\newtheorem{definition}[theorem]{Definition}
\newtheorem*{definition*}{Definition}
\newtheorem*{claim}{Claim}
\def\R{\mathbb{R}}
\def\N{\mathbb{N}}
\def\Z{\mathbb{Z}}
\def\C{\mathbb{C}}
\def\P{\mathbb{P}}
\def\E{\mathbb{E}}
\def\t#1{\widetilde{#1}}
\def\1{\mathbbold{1}}
\def\eps{\varepsilon}
\def\d{\mathrm{d}}
\def\rdown#1{\left\lfloor #1 \right\rfloor}
\def\D{\mathbb{D}}
\def\H{\mathbb{H}}
\def\T{\mathbb{T}}
\def\cparam{\mathbf{c}}
\def\cudisc{\overline{\mathbb{D}}} 
\def\deadtime{\tau_D}
\def\etasize{\nu}
\def\thetafake{\theta^{\bot}}
\def\thetalegit{\theta^{\top}}
\def\discoeff{2(e^{\cparam} - 1)^{\frac{1}{4}}} 
\def\bp{\widehat{z}}
\def\cadlag{c\`adl\`ag }
\def\sigmaub{\cparam^{2^{2^{1/\cparam}}}}
\def\Lj#1{\frac{\beta}{4}\left( \frac{L}{\beta} \right)^{2^{#1}}}
\title{SLE scaling limits for a Laplacian growth model}
\author{Frankie Higgs \\ Department of Mathematics and Statistics, Lancaster University, 
Lancaster LA1 4YF, UK.
\thanks{f.higgs@lancaster.ac.uk}
}
\begin{document}

\maketitle

\begin{abstract}
	We consider a model of planar random aggregation from the ALE$(0,\eta)$ family
	where particles are attached preferentially in areas of low harmonic measure.
	We find that the model undergoes a phase transition in negative $\eta$,
	where for sufficiently large values
	the attachment distribution of each particle becomes atomic in the small particle limit,
	with each particle attaching to one of the two points at the base of the
	previous particle.
	This complements the result of Sola, Turner and Viklund
	for large positive $\eta$,
	where the attachment distribution condenses to a single atom
	at the tip of the previous particle.
	
	As a result of this condensation of the attachment distributions
	we deduce that
	in the limit as the particle size tends to zero
	the ALE cluster converges to a Schramm--Loewner evolution
	with parameter $\kappa = 4$ (SLE$_4$).
	
	We also conjecture that using other particle shapes
	from a certain family, we have a similar SLE scaling result,
	and can obtain SLE$_\kappa$ for any $\kappa \geq 4$.
\end{abstract}


\tableofcontents

\section{Introduction}

\subsection{Conformal aggregation}
\label{sec:conformal-aggregation}

There has been a great deal of research into models
of random aggregation, where particles are added at each time step
to the existing cluster at random locations.
These models are perhaps most easily defined on the lattice $\Z^d$,
where each particle is one vertex, for example
diffusion-limited aggregation (DLA) \cite{witten1981diffusion}
or the Eden model \cite{eden1961two}.
However, the underlying anisotropy of $\Z^d$
may be retained by the cluster on large scales,
making these models a poor approximation of reality
under some conditions \cite{dlasim-gb} \cite{eden-high-dim}.

In two dimensions we may change
to a setting without this problem;
models of \emph{conformal growth}
existing in the complex plane $\C$
rather than $\Z^2$.
In this paper we will study
the aggregate Loewner evolution (ALE($\alpha,\eta$))
model introduced in \cite{stv-ale},
which is a generalisation of
the Hastings--Levitov model (HL($\alpha$)) \cite{hl-model}.\\

In a conformal aggregation model,
we add particles to our cluster by composing conformal maps
from a fixed reference domain to smaller domains.
Our initial cluster will be the closed unit disc
$K_0 = \overline{\D} = \{ z \in \C : |z| \leq 1 \}$.
We attach a particle to $K_0$ by applying a map
from
its complement in the Riemann sphere $\C_\infty$,
$\Delta := \C_\infty \setminus \overline{\D}$,
to a smaller domain, and then the new cluster will
be the complement of the image of $\Delta$.
We will use particles of the form
$(1, 1+d]$ for $d > 0$.
%
%
\begin{definition}
For any $d > 0$,
by the Riemann mapping theorem
there exists a unique bijective conformal map
\[
 f^d \colon \Delta \to \Delta \setminus (1, 1+d]
\]
such that $f^d(z) = e^{\cparam}z + O(1)$ near $\infty$,
for some $\cparam = \cparam(d) \in \R$.
\end{definition}
One advantage of the slit particles
we use in this paper
over more general particle shapes
is that
we have an explicit expression
for $f^d(z)$ \cite{marshall-rohde}.
We call $\cparam > 0$
the \emph{(logarithmic) capacity} of the particle.
As the name suggests, we can view $\cparam$
as measuring the ``size'' of a set in a certain sense.
As we consider the ``small-particle limit''
we will parameterise the model
by the particle capacity $\cparam$
(equivalently by $d$).
\begin{definition}
The preimage of the particle $(1,1+d]$ under $f$
is $\{ e^{i\theta} : -\beta \leq \theta \leq \beta \}$,
where $0 < \beta(\cparam) < \pi$
is uniquely determined by $f(e^{i\beta}) = 1$.
\end{definition}
We can explicitly relate the quantities $\cparam$,
$\beta$ and $d$ using two equations
found in \cite{marshall-rohde}
and \cite{stv-ale}:
$4e^{\cparam} = (d+2)^2/(d+1)$ and
$e^{i\beta} = 2e^{-\cparam} - 1 + 2ie^{-\cparam}\sqrt{e^{\cparam} - 1}$.
Asymptotically,
as $\cparam \to 0$,
these give us
$\beta(\cparam) \sim d(\cparam)
\sim 2 \cparam^{1/2}$.\\


We have maps which can attach one particle,
so now we want to be able to build a cluster with multiple particles
by composing maps which attach particles in different positions.
For $\theta \in \R$ and $\cparam > 0$, define the rotated map
\begin{align*}
 &f^{\theta,\cparam} \colon \Delta \to \Delta \setminus e^{i\theta}(1,1+d(\cparam)],\\
 &f^{\theta,\cparam}(z) = e^{i\theta} f^{d(\cparam)}( e^{-i\theta}z ),
\end{align*}
and note that it has the same behaviour $f^{\theta,\cparam}(z) = e^{\cparam} z + O(1)$
near $\infty$ as does $f^{d(\cparam)}$.

Now we want to attach multiple particles.
\begin{definition}
Given a sequence of angles $(\theta_n)_{n \in \N}$
and of capacities $(c_n)_{n \in \N}$,
if we write
$f_j = f^{\theta_j,c_j}$ then
we can define
\begin{equation}
\label{eq:phi-n-def}
 \Phi_n = f_1 \circ f_2 \circ \dotsb \circ f_n,
\end{equation}
and define the $n$th cluster
$K_n$
as the complement of
$\Phi_n(\Delta)$, so
\[
 \Phi_n \colon
 \Delta \to \C_\infty \setminus K_n.
\]
Note that the total capacity is
$\cparam(K_n) = \sum_{k=1}^{n} c_k$,
i.e.\ $\Phi_n(z) = e^{\sum_{k=1}^n c_k}z + O(1)$
near $\infty$.
\end{definition}

We can now use this setup
to construct various models
of \emph{random}
growth,
by choosing the angles
$(\theta_n)_{n \in \N}$
and capacities $(c_n)_{n \in \N}$
according to a stochastic
process.
%

\subsection{Aggregate Loewner evolution}
\label{sec:ale}

The \emph{aggregate Loewner
evolution} model introduced in
\cite{stv-ale}
is a conformal aggregation model
as in
\autoref{sec:conformal-aggregation},
where for the $(n+1)$th particle
the distribution of its attachment angle $\theta_{n+1}$
and its capacity $c_{n+1} = \cparam(P_{n+1})$
are functions of the
density of \emph{harmonic measure}
on the boundary of $K_n$.
The conditional distribution of
$\theta_{n+1}$ and the way
we obtain $c_{n+1}$
are respectively
controlled by the two
parameters $\eta$ and $\alpha$.

\begin{definition}
Inductively,
we choose $\theta_{n+1}$
for $n \geq 0$
conditionally on
$\theta_1, \dotsc, \theta_n$
according to the probability
density function
\begin{equation}
\label{eq:density}
 h_{n+1}(\theta)
 =
 \frac{1}{Z_n}
 \left|
  \Phi_n'\left(
   e^{\sigma + i\theta}
  \right)
 \right|^{-\eta},\,
 \theta \in (-\pi, \pi],
\end{equation}
where
$Z_n 
=
\int_\T
 |
  \Phi_n'(e^{\sigma + i\theta})
 |^{-\eta}
\,\d\theta$
is a normalising factor.
We have introduced a
$\sigma = \sigma(\cparam) > 0$
as the poles and zeroes
of $\Phi_n'$
on the boundary
mean the measure $h_{n+1}$
is not necessarily well-defined
if $\sigma = 0$,
but we take $\sigma \to 0$ as $\cparam \to 0$.

Since $\sigma$ controls the level of
boundary detail captured by $h_{n+1}$,
and in the $\eta < -2$ regime $h_{n+1}$
is concentrated about the \emph{least} prominent
points on the boundary,
in this paper $\sigma$ will decay extremely
quickly.

On the other hand, in \cite{subcritical}
it is shown that
if $\sigma$ does not decay faster
than $\cparam^{1/2}$
as $\cparam \to 0$,
including if $\sigma$
is kept fixed,
the resulting scaling limit is a disc.

By this definition the first attachment point
$\theta_1$ is chosen uniformly on $\T$.
For convenience we work with $\theta_1 = 0$,
and the random case can be recovered by applying
a random rotation to the final cluster.
\end{definition}

After choosing
$\theta_{n+1}$,
we choose the capacity
of the $(n+1)$th particle
to be
\begin{equation}
\label{eq:capacity}
 c_{n+1}
 =
 \cparam
 |
  \Phi_n'(
   e^{\sigma + i\theta_{n+1}}
  )
 |^{-\alpha}
\end{equation}
where $\cparam$ is a 
capacity parameter and
$c_1 = \cparam$,
and we will later consider
the limit shape of the cluster
as $\cparam \to 0$.

\begin{figure}[h]
\def\lss{1} 
 \centering
 \begin{tikzpicture}[rotate=45]
  \draw node (cluster) at (-0.12,-0.13) {
    \includegraphics[trim=0 0 0 0, clip, scale=\lss]{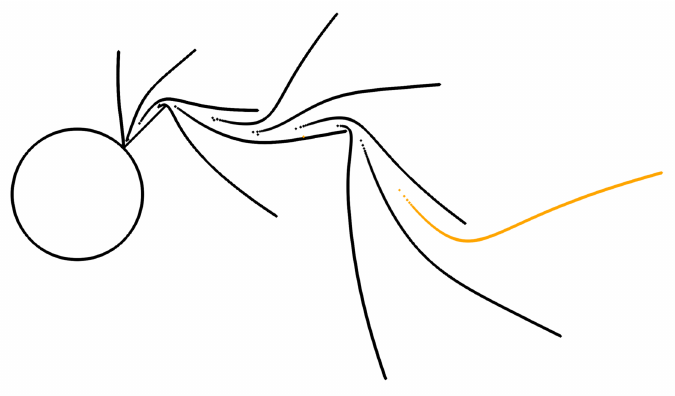}
  };
  \draw[color=red, line width=2pt, dashed]
  (0.8*\lss, -0.7*\lss)
  .. controls
   (0.85*\lss, -0.5*\lss) and (0.9*\lss,0.1*\lss)
  ..
  (0.95*\lss, 0.55*\lss)
  .. controls
   (0.9*\lss, 0.65*\lss)
  ..
  (0.85*\lss, 0.8*\lss)
  .. controls
   (0.6*\lss, 0.85*\lss) and (0.5*\lss, 0.9*\lss)
  ..
  (0.3*\lss, 1*\lss)
  .. controls
   (0*\lss, 1.3*\lss) and (-0.25*\lss, 1.5*\lss)
  ..
  (-0.45*\lss, 1.7*\lss)
  .. controls
   (-0.5*\lss, 1.8*\lss) and (-0.55*\lss, 1.9*\lss)
  ..
  (-0.6*\lss, 2*\lss)
  .. controls
   (-0.7*\lss, 2.3*\lss) and (-0.8*\lss, 2.6*\lss)
  ..
  (-0.9*\lss, 2.9*\lss)
  .. controls
   (-1*\lss, 3.1*\lss)
  ..
  (-1.0*\lss, 3.1*\lss)
  .. controls
   (-1.1*\lss, 3.2*\lss)
  ..
  (-1.25*\lss, 3.15*\lss)
  .. controls
   (-1.5*\lss, 3.1*\lss)
  ..
  (-2*\lss, 3.05*\lss);
 \end{tikzpicture}
 \caption{\label{fig:last-slit}The final particle (the rightmost, in orange)
 of the cluster $K_n$ is highly distorted by the application
 of the first $n-1$ maps $f_{n-1}, f_{n-2}, \dotsc, f_1$.
 The distortion is much greater near the base of the particle:
 we have had to fill in a guess (the red dashed line)
 for the behaviour of the particle deep into the cluster,
 as the distortion is so large there that we are unable to find the exact
 location of enough points to draw a sensible diagram.
 In fact, the red dashed section corresponds to only 1/500\,000th
 the length of the original, undistorted slit.
 }
\end{figure}

\subsection{Our results}
\label{sec:results}

\begin{figure}
 \centering
 \includegraphics[width=1\textwidth, trim= 0 30 0 120, clip]{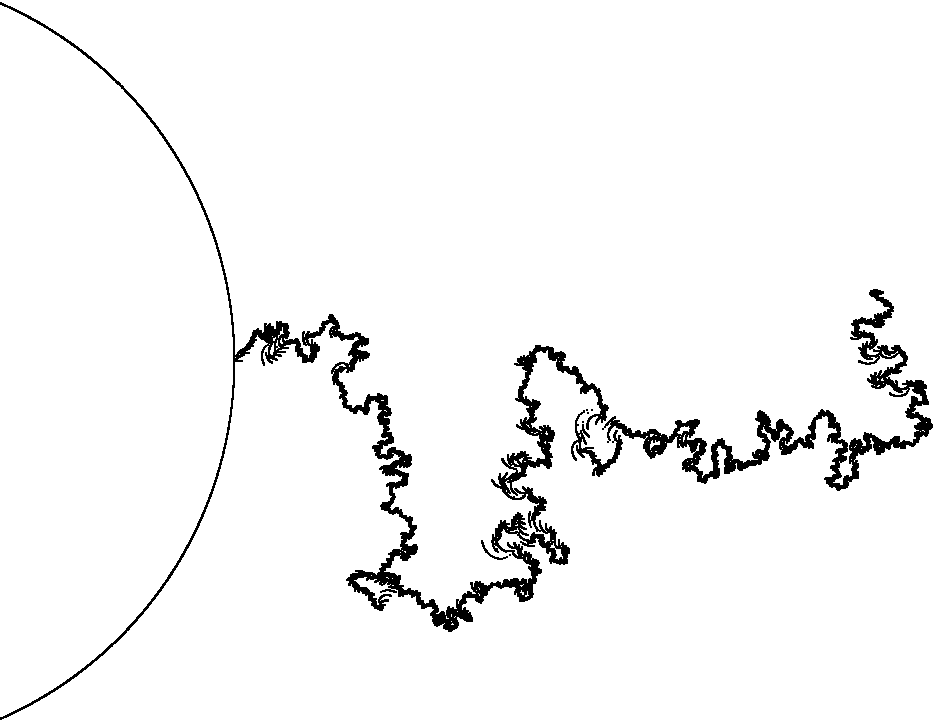}
 \caption{
\label{fig:sle-cluster}
 One cluster of the ALE$(0,-\infty)$ process,
 with 3000 particles each of capacity $\cparam = 0.0001$.
 } 
\end{figure}

In this paper,
we study the ALE model defined in
\autoref{sec:ale}
with
$\alpha = 0$
and large negative
values of the parameter
$\eta$,
which controls the influence of harmonic measure
on our attachment locations.



The case $\alpha = 2$ often gives a model in which
each particle is approximately the same size.
In this paper we take $\alpha = 0$,
where the model can be easier to analyse as the capacities
are deterministic.
In this case, the distortion of particles can lead to physically unrealistic
outcomes, as in \cite{george-hl}
where the distorted size of the final particle in the cluster
does not disappear in the limit.
For the model we are considering here,
\autoref{fig:last-slit} shows that the distortion
affects the shape as well as the size of the particles.\\

For $\eta > 0$ the density $h_{n+1}$ in \eqref{eq:density}
is an exaggeration of harmonic measure,
and in \cite{stv-ale}
the authors find that for $\eta > 1$
the attachment distribution is concentrated around
the point of highest harmonic measure,
converging to a single atom as $\cparam \to 0$.
For a slit particle the point of highest harmonic
measure is the tip (see \autoref{fig:harmonic-measure}),
so this corresponds to the growth of a straight line.

In this paper, we find the equivalent phase transition
in negative $\eta$:
for $\eta < -2$ the attachment distribution is concentrated
around the points of lowest harmonic measure.
For a slit particle the two points of lowest harmonic measure
are either side of the base (see \autoref{fig:harmonic-measure} again),
and so
$\theta_2 \approx \theta_1 \pm \beta$
with the probability of each tending to $1/2$
as $\cparam \to 0$.
We go on to find that for all $n$ the distribution of
$\theta_{n+1}$ is concentrated around $\theta_n \pm \beta$,
and so the angle sequence approximates a random walk
of step length $\beta \sim 2\cparam^{1/2}$.

This gives us the following statement about the \emph{driving function}
generating the cluster (see \autoref{sec:sle}):
\begin{proposition}
\label{thm:driver-limit}
	Fix some $T > 0$.
	For $\eta < -2$
	and if $\sigma(\cparam) \leq \sigmaub$
	for all $\cparam < 1$
	let $(\theta_n^\cparam)_{n \geq 1}$
	be the sequence of angles we obtain from the
	$\mathrm{ALE}(0,\eta)$ process
	with capacity parameter $\cparam$.
	Let $\deadtime = \inf\{ n \geq 2 : \min_{\pm}|\theta_n - (\theta_{n-1} \pm \beta_\cparam)| > D\}$,
	where $D = \cparam^{9/2}\sigma^{1/2}$.
	As $\cparam \to 0$,
	\[
		\P\left[ \deadtime \leq \rdown{T/\cparam} \right]
		= O(\cparam^3).
	\]
	Let $\xi_t^{\cparam} = \theta_{\rdown{t/\cparam}+1}^{\cparam}$
	for all $0 \leq t \leq T$.
	Then
	\[
		(\xi_t^{\cparam})_{t \in [0,T]} \to (2B_t)_{t \in [0,T]}
		\text{ in distribution as } \cparam \to 0,
	\]
	as a random variable in the Skorokhod space $D[0,T]$.
\end{proposition}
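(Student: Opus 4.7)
The strategy is a Donsker-type invariance principle. Earlier sections tell us that, in the small-$\cparam$ limit, each $\theta_{n+1}-\theta_n$ takes values close to $\pm\beta$ with probability near $1/2$ each; since $\beta \sim 2\cparam^{1/2}$, this is a random walk with step variance $\beta^2 \sim 4\cparam$ per time step $\cparam$, which rescales to $(2B_t)$.

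First I would upgrade the single-step concentration result (for $\eta<-2$) to quantitative moment estimates on $\Delta_n := \theta_{n+1}-\theta_n$. The density $h_{n+1}$ in \eqref{eq:density} places mass near the maxima of $|\Phi_n'(e^{\sigma+i\cdot})|^{|\eta|}$, and these are the preimages of the base of the last particle, namely $\theta_n \pm \beta$. The aim is to show, uniformly in $n \leq \rdown{T/\cparam}$,
\[
 \E\bigl[\Delta_n \mid \c F_n\bigr] = o(\cparam), \qquad
 \E\bigl[\Delta_n^2 \mid \c F_n\bigr] = 4\cparam\bigl(1 + o(1)\bigr),
 \qquad
 \E\bigl[\Delta_n^4 \mid \c F_n\bigr] = O(\cparam^2),
\]
where $\c F_n = \sigma(\theta_1,\dots,\theta_n)$. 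The first moment captures the approximate symmetry of the two atoms and the second gives the correct diffusive scale; the fourth-moment bound provides the Lindeberg condition. For this one writes $\Phi_n = \Phi_{n-1}\circ f_n$ and uses that, in an angular window of size $\ll \beta$ around each of $\theta_n\pm\beta$, the factor $|\Phi_{n-1}'|$ is approximately constant by standard distortion, so $h_{n+1}$ is locally determined by the explicit map $f_n$.

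With these moments, set $M_n^\cparam := \sum_{k=1}^n \bigl(\Delta_k - \E[\Delta_k\mid \c F_k]\bigr)$. Then $(M^\cparam_{\rdown{t/\cparam}})$ is a \cadlag martingale whose predictable quadratic variation at time $t$ is $4t + o(1)$ uniformly in $t\in[0,T]$, and whose jumps are $O(\cparam^{1/2})$. The martingale central limit theorem in the Skorokhod space (e.g.\ Ethier--Kurtz, Theorem~7.1.4) gives
\[
 \bigl(M^\cparam_{\rdown{t/\cparam}}\bigr)_{t\in[0,T]} \to (2B_t)_{t\in[0,T]}
 \quad \text{in distribution in } D[0,T].
\]
The drift remainder $\sum_{k=1}^{\rdown{t/\cparam}} \E[\Delta_k\mid \c F_k]$ is bounded by $(T/\cparam)\cdot o(\cparam) = o(1)$, uniformly in $t$, so $\xi^\cparam_t - \xi^\cparam_0$ has the same limit. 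Tightness in $D[0,T]$ follows from Aldous's criterion using the uniform jump bound and the second-moment estimate, which together give the usual modulus-of-continuity control.

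\textbf{Main obstacle.} The hard step is the uniform-in-$n$ concentration of $h_{n+1}$ with the right quantitative error. A pointwise concentration result is essentially the local behaviour of $f_n$ and is easy, but to push it uniformly across $O(1/\cparam)$ steps one must control the composition $\Phi_n$, whose distortion near the boundary is enormous (see \autoref{fig:last-slit}). The assumption $\sigma(\cparam) \leq \sigmaub$ is designed for exactly this: it puts the evaluation circle $|z|=e^\sigma$ close enough to $\partial\D$ to localise $h_{n+1}$ to two narrow windows around $\theta_n \pm \beta$, while still allowing the Koebe distortion theorem to give tight control of $|\Phi_{n-1}'|$ in each window. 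Making these estimates summable over $n$ (so that the errors $o(\cparam)$ and $o(1)$ above are genuine) is where most of the work would go.
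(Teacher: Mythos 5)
Your overall strategy — a martingale/functional CLT driven by conditional moment estimates on $\Delta_n := \theta_{n+1}-\theta_n$ — is the same as the paper's. The paper appeals to Corollary~3.8 of \cite{mcleish-near-mg}, which packages the martingale decomposition, tightness, and diffusion limit into three checks (a truncated Lindeberg condition, convergence of the conditional second moments to $4t$, and convergence of the summed absolute conditional first moments to $0$); your route via Ethier--Kurtz plus Aldous's criterion is essentially equivalent, and since the jumps are uniformly bounded by $\pi$ your proposed fourth-moment bound is more than is needed.

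The genuine gap is in your sketch of how to establish the single-step concentration and moment estimates. You write that in a window of size $\ll\beta$ around $\theta_n\pm\beta$, ``the factor $|\Phi_{n-1}'|$ is approximately constant by standard distortion, so $h_{n+1}$ is locally determined by the explicit map $f_n$.'' This is false. Writing $\Phi_n'(w) = \Phi_{n-1}'(f_n(w))\,f_n'(w)$, the map $f_n$ sends $e^{i(\theta_n\pm\beta)}$ to $e^{i\theta_n}$, and $e^{i\theta_n}$ is within distance $D = \cparam^{9/2}\sigma^{1/2}$ of $e^{i\thetalegit_n}$, which \emph{is} a pole of $\Phi_{n-1}'$. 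For $\theta$ near $\theta_n\pm\beta$ the point $f_n(e^{\sigma+i\theta})$ is at distance on the order of $\cparam^{1/4}\sigma^{1/2}$ from $e^{i\theta_n}$, so it approaches the pole of $\Phi_{n-1}'$ as $\sigma\to 0$, and Koebe distortion gives no control: $|\Phi_{n-1}'|$ blows up there. In fact Lemma~\ref{thm:deriv-estimate} shows that $|\Phi_n'(e^{\sigma+i(\theta_n\pm\beta+\varphi)})|$ behaves like $(\sigma^2+\varphi^2)^{-\frac{1}{2}(1-2^{-n})}$, an exponent that \emph{compounds} through the composition — not the $(\sigma^2+\varphi^2)^{-1/4}$ that would result if $\Phi_{n-1}'$ were locally constant. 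This compounding is precisely what drives the stringent requirement $\sigma \leq \cparam^{2^{2^{1/\cparam}}}$ and why the paper devotes \autoref{sec:tracking}--\autoref{sec:small-contributions} to tracking the images $\Phi_{n-j,n}(w)$ inductively (Proposition~\ref{thm:sticky}), bounding $Z_n$ from below (Proposition~\ref{thm:pf-bound}), bounding $|\Phi_n'|$ away from the poles on both regular points (Proposition~\ref{thm:distant-points}) and the singular points near old bases (Lemma~\ref{thm:old-bp-bound}), and separately establishing the near-symmetry via telescoping cancellation of the distortion factors (Proposition~\ref{thm:symmetry}). Your first-moment estimate $\E[\Delta_n\mid\mathcal F_n]=o(\cparam)$ likewise cannot be read off from symmetry of $f_n$ alone, since $|\Phi_n'(e^{\sigma+i(\theta_n+\beta)})| \neq |\Phi_n'(e^{\sigma+i(\theta_n-\beta)})|$ in general; it requires the explicit cancellation argument in Proposition~\ref{thm:symmetry} giving an error of order $\cparam^{11/4}$. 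In short: the CLT machinery at the end is right, but the estimate you called ``easy'' (the pointwise concentration per step) is not, because the relevant derivative is a composition evaluated at its own pole, and controlling it is the bulk of the paper.
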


We explain in \autoref{sec:sle}
that by using Loewner's equation
we can immediately turn a result
about convergence of such a driving function
into a result about convergence of clusters
in an appropriate space $\mathcal{K}$.
The main theorem of the paper therefore follows
immediately from the proposition:

\begin{theorem}
\label{thm:main-result}
	For $\eta, \sigma$
	as in Proposition \ref{thm:driver-limit},
	let the corresponding $\mathrm{ALE}(0,\eta)$ cluster
	with $N = \rdown{T/\cparam}$
	particles each of capacity $\cparam$ be $K_N^\cparam$.
	Then as $\cparam \to 0$,
	$K_N^\cparam$ converges in distribution
	as a random variable in $\mathcal{K}$
	to a radial $\mathrm{SLE}_4$ curve of capacity $T$.
\end{theorem}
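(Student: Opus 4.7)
The plan is to transport the convergence of driving functions established in Proposition \ref{thm:driver-limit} through the radial Loewner correspondence, turning it into convergence of the associated hulls.

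The first step is to identify the ALE cluster $K_N^{\cparam}$ with the hull of a radial Loewner chain. The slit map $f^{\theta,\cparam}$ is precisely the inverse of the exterior radial Loewner map at time $\cparam$ driven by the constant function $\theta$: this is built into the normalisation $f^{d}(z) = e^{\cparam}z + O(1)$ at infinity, and is consistent with the explicit formula of Marshall--Rohde from which the relation $4e^{\cparam} = (d+2)^2/(d+1)$ was read off. Composing, $\Phi_N$ is the inverse radial Loewner map at time $T = N\cparam$ driven by $\xi_t^{\cparam} = \theta_{\rdown{t/\cparam}+1}^{\cparam}$, and consequently $K_N^{\cparam}$ coincides exactly with the Loewner hull of $\xi^{\cparam}$ at time $T$.

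The second step is to invoke continuity of the Loewner-to-hull map. By Proposition \ref{thm:driver-limit}, $\xi^{\cparam} \to 2B$ in distribution in $D[0,T]$. Since the limit $2B$ is a.s.\ continuous, Skorokhod convergence to $2B$ is equivalent to uniform convergence, and by the Skorokhod representation theorem we may choose a coupling in which $\xi^{\cparam} \to 2B$ uniformly on $[0,T]$ almost surely. A standard continuity result for the radial Loewner equation then says that if driving functions converge uniformly to a continuous limit $\xi$, the corresponding hulls converge in the Carath\'eodory topology on $\c{K}$ to the hull driven by $\xi$. Since the hull driven by $2B_t = \sqrt{4}\,B_t$ is by definition radial $\mathrm{SLE}_4$ run to capacity $T$, this yields $K_N^{\cparam} \to K_T$ in distribution, as required.

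The substantive probabilistic work is entirely contained in Proposition \ref{thm:driver-limit}. The only potential obstacle in the present theorem is the continuity of the Loewner-to-hull map, which in general is delicate for driving functions with jumps such as our piecewise constant $\xi^{\cparam}$. Here that difficulty dissolves because the limit is continuous: Skorokhod convergence in $D[0,T]$ upgrades to uniform convergence, after which the classical stability estimates for the radial Loewner equation apply directly. Thus \autoref{thm:main-result} reduces to an essentially deterministic corollary of Proposition \ref{thm:driver-limit} combined with standard Loewner continuity.
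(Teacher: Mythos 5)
Your proof is correct and follows essentially the same approach as the paper: the theorem is a near-immediate consequence of Proposition \ref{thm:driver-limit} together with continuity of the driving-function-to-hull map. The only routing difference is minor: the paper directly invokes the fact (citing the relevant Loewner-continuity reference) that $\xi \mapsto K_T$ is continuous on all of $D[0,T]$ with the Skorokhod topology and then applies the continuous mapping theorem, whereas you reduce to the case of uniform convergence via the Skorokhod representation theorem (valid because the limit $2B$ is a.s.\ continuous, so Skorokhod convergence upgrades to uniform convergence) and then need only the more elementary Loewner stability result for uniformly convergent drivers with a continuous limit. Both routes are valid; yours trades a stronger cited continuity result for a standard probabilistic reduction.
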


We can see in \autoref{fig:sle-cluster} a cluster corresponding to a random walk,
which despite being visibly composed of slits resembles an SLE$_4$ curve.



\begin{remark}
	We can give $\eta$ a physical interpretation if we think
	of growth in which access to environmental resources
	(proportional to harmonic measure)
	affects the growth rate in a \emph{non-linear} manner.
	For negative $\eta$ we could also interpret
	ALE$(\alpha,\eta)$ as modelling a cluster
	in an environment	which inhibits growth,
	so growth is concentrated
	in areas with the least exposure to the environment.
	
	The most physically-relevant models
	are those with $\alpha = 2$,
	where each particle in the cluster
	has approximately the same size.
	The case we consider, $\alpha = 0$,
	is somewhat unphysical as the later particles
	have a macroscopic size
	(in our case the final particle has
	a shape approximating the whole path of the SLE$_4$).
	In our case the ``visible'' portion of each particle
	which is not hidden between other particles
	is microscopic,
	although the ``visible'' part
	of the later particles
	is still significantly longer
	than the first particles.
	
 	In any case, the remarkable thing about the $\eta < -2$, $\alpha = 0$ case
 	is that it is drawn from a family of models which naturally extend
 	DLA-type growth,
 	and we obtain an SLE$_4$ scaling-limit for a whole range of parameters.
 	To this author's
 	knowledge no other
 	conformal growth model
 	in the plane has been rigorously proved
 	to converge to a random limit such as the SLE.
\end{remark}

\begin{remark}
	The convergence of attachment distributions
	to atomic measures for $\eta < -2$
	complements the phase transition result of \cite{stv-ale}
	in which it is shown that the limiting attachment measures
	are atomic for $\eta > 1$.
	For $-2 < \eta < 1$
	the distribution $h_2$ of the second particle
	is supported on all of $\T$ even in the limit
	$\cparam \to 0$,
	showing that we do indeed have three qualitative phases:
	for extreme values of $\eta$ the attachment measures
	are degenerate,
	but this is not the case for $-2 < \eta < 1$.
\end{remark}

In \autoref{sec:alt-particles} we conjecture that
similar scaling results to
Proposition \ref{thm:driver-limit}
and Theorem \ref{thm:main-result}
can be obtained with particles other than the slit.
Using suitable particles which have a single point of contact
with the circle, we believe that the limiting cluster
is always an SLE$_\kappa$ for some $\kappa \in [4,\infty]$
(where by SLE$_\infty$ we mean a uniformly growing disc).


\subsection{Loewner's equation and the Schramm--Loewner evolution}
\label{sec:sle}



We obtain a \emph{Schramm--Loewner evolution} (SLE) cluster
as the scaling limit of our model,
so we will give a brief overview here of what SLE is
and a few useful facts from Loewner theory which we use
to establish our scaling limit.
For a more detailed treatment, see
\cite{cm-aggregation}, \cite{lawler-conformal-book}
and \cite{duren-univalent}.\\

%

Firstly, we look at \emph{Loewner's equation},
which encodes our growing cluster
by a ``driving function'' taking values
on the circle.

%

\begin{definition}
Let $\xi : [0, T] \to \R$ be a \cadlag function.
Then there is a unique solution to \emph{Loewner's equation}
\begin{equation}
\label{eq:loewner}
	\varphi_0(z) = z,
	\quad
	\frac{\partial}{\partial t} \varphi_t(z) 
	=
	\varphi_t'(z) z \frac{z + e^{i\xi_t}}{z - e^{i\xi_t}},
	\quad
	z \in \Delta,
\end{equation}
corresponding to a growing cluster
via $\varphi_t(\Delta) = \C_\infty \setminus K_t$.\\
\end{definition}

For a sequence of angles $(\theta_n)_{n \geq 1}$
and a capacity $\cparam$,
a growth model constructed as in 
\autoref{sec:conformal-aggregation}
corresponds to the
cluster obtained
by solving Loewner's equation
with the driving function
\[
	\xi_t = \theta_{\rdown{t/\cparam}+1}.
\]

%

\begin{definition}
If $(B_t)_{t \in [0,T]}$ is a standard Brownian motion,
then the \emph{Schramm--Loewner evolution} with parameter $\kappa > 0$
(SLE$_\kappa$)
is the random cluster obtained by solving Loewner's equation
with the driving function given by $\xi_t = \sqrt{\kappa} B_t$.
\end{definition}

\begin{remark}
One very useful property of Loewner's equation for this paper
is that the map
$D[0,T] \to \mathcal{K}$ given by
$\xi \mapsto K_T$
is continuous \cite{levy-loewner-hulls},
where $D[0,T]$ is the usual Skorokhod space
and $\mathcal{K}$ is the set of compact subsets of $\C$ containing $0$,
equipped with the Carath\'{e}odory topology described in \cite{duren-univalent}.

This property of Loewner's equation means we can deduce
convergence of an ALE(0,$\eta$) cluster to an SLE$_4$
for $\eta < -2$
by showing that the cluster corresponds to a driving function
converging to $2B$ for a standard Brownian motion $B$
as $\cparam \to 0$.
\end{remark}

Schramm--Loewner evolutions describe the scaling limits
of many discrete models,
such as the loop-erased random walk,
which converges to an SLE$_2$ curve
\cite{lawler2011conformal},
or critical percolation,
the boundaries
of which has been related to SLE$_6$ \cite{smirnov-sle6}.
SLEs have also been used to construct the
\emph{quantum Loewner evolution} (QLE) \cite{qle} family of clusters,
which have been proposed as the scaling limits
of the dielectric breakdown model
on a number of random surfaces.

\subsection{Structure of paper}

Our proof of Proposition \ref{thm:driver-limit}
will involve showing that the distribution of $\theta_{n+1}$
conditional on the previous angles $(\theta_1, \dotsc, \theta_n)$
converges to $\frac{1}{2}( \delta_{\theta_n + \beta} + \delta_{\theta_n - \beta} )$,
and so the whole path $\xi^\cparam$ converges to the same limit as a simple random
walk with step length $\beta \sim 2\cparam^{1/2}$.\\ 

We can use a heuristic approach to see why we might expect this to be the case.
If we formally take $\eta = -\infty$ and $\sigma = 0$,
so the $n$th attachment point
$\theta_{n+1}$ is chosen uniformly from the finite set
$\{ \theta : \liminf_{\sigma \to 0} \inf_{w \in \T} | \Phi_n'(e^{\sigma}e^{i\theta}) |/| \Phi_n'(e^{\sigma}w) | > 0 \}$
(i.e. among the ``strongest poles'' of $\Phi_n'$),
and let $\tau = \inf\{ n : |\theta_{n} - \theta_{n-1}| \not= \beta \}$,
then we can calculate that for $N = \rdown{T/\cparam}$
in the limit $\cparam \to 0$ we have
$\P[ \tau \leq N ] \to 0$ as $\cparam \to 0$.
In other words, at each step the distribution $h_{n+1}$
is equal to $\frac{1}{2}( \delta_{\theta_n - \beta} + \delta_{\theta_n + \beta} )$.

Our approach for finite $\eta < -2$ will therefore be
to find a small upper bound on $h_{n+1}(\theta)$ for $\theta$
away from the poles of $\Phi_n'$
to deduce that $h_{n+1}$ is an approximation to a sum of atoms
at the poles.
Then we show separately
that the contribution to $Z_n = \int_\T h_{n+1}(\theta) \,\d\theta$
from poles other than $e^{i(\theta_n \pm \beta)}$ is small.

\begin{figure}
 \centering
 \includegraphics[width=0.46\textwidth, trim=0 0 0 0, clip]{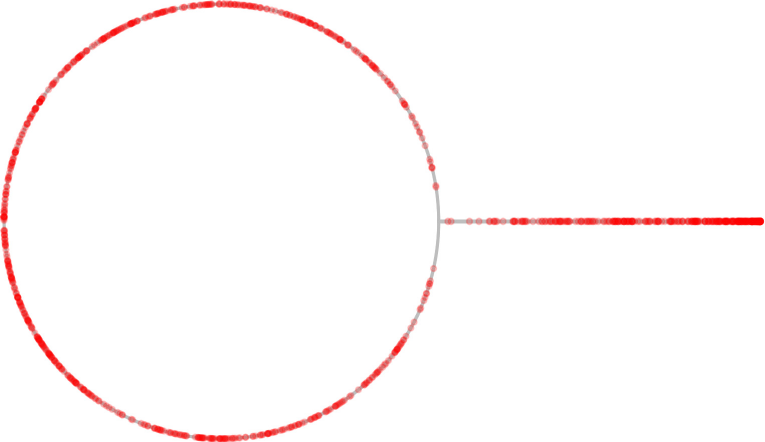}
 \hfill
 \includegraphics[width=0.46\textwidth, trim=0 0 0 0, clip]{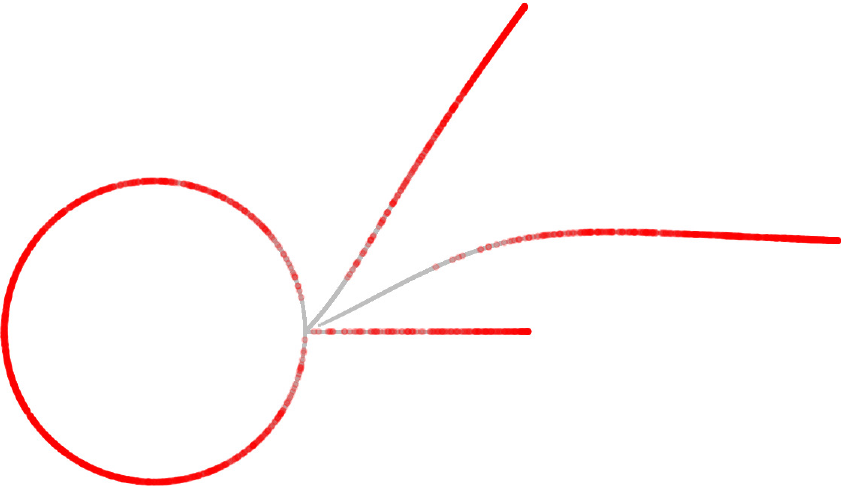}
 \caption{
 \label{fig:harmonic-measure}
 Left: the one-slit cluster of our process with 1,000 points in red sampled according to harmonic measure on the boundary.
 Right: the three-slit cluster of the process with 3,000 points sampled according to harmonic measure.
 Note in the second image that there are almost no points landing near the base
 of the most recent (longest) particle.}
\end{figure}

In the actual model with $-\infty < \eta < -2$, we can only show that
$h_{n+1}$ \emph{approximates}
$\frac{1}{2}(\delta_{\theta_n - \beta} + \delta_{\theta_n + \beta})$
as $\cparam \to 0$.
However, weak convergence of these measures
is not enough to prove Proposition \ref{thm:driver-limit},
so we will need to introduce some extra notation to describe the possible behaviour
of the process $(\theta_n)_{n \geq 1}$,
and make precise the way in which its steps converge to the SSRW steps as above.

\begin{definition}
\label{def:deadtime}
 For a small $D = D(\cparam)$
 (which we will specify later),
 define the stopping time
 \[
  \deadtime
  :=
  \inf\{
   n \geq 2
   :
   \min(
    | \theta_n - (\theta_{n-1} + \beta) |,
    | \theta_n - (\theta_{n-1} - \beta) |
   )
   > D
  \}.
 \]
\end{definition}

\begin{remark}
 Given that $n < \deadtime$, we have a lot of information about the angle
 sequence $(\theta_1, \dotsc, \theta_n)$,
 and so can say quite a lot about the conditional distribution of $\theta_{n+1}$.
 In particular, we can say that the probability that $n+1 = \deadtime$ is very low,
 and that the distribution of $\theta_{n+1} - \theta_n$ is (approximately) symmetric.
 The results of all the following sections will be used to establish these two facts.
\end{remark}

\begin{theorem}
\label{thm:step-convergence}
 Suppose that $\nu > 2$.
 There exists a constant $A > 0$
 depending only on $\nu$ and $T$
 such that when
 $\sigma \leq
 \cparam^{2^{2^{1/\cparam}}}$,
 then for $D = \cparam^{9/2}\sigma^{1/2}$,
 whenever $n < N \wedge \deadtime$
 and $\cparam$ is sufficiently small,
 \begin{equation}
 \label{eq:concentration-result}
  \int_{F_n}
   h_{n+1}(\theta)
  \,\d \theta
  \leq
  A \cparam^{4}
 \end{equation}
 with probability $1$,
 where
 $F_n = \{
  \theta \in \T
  :
  |\theta - (\theta_n + \beta)| \geq D
  \text{ and }
  |\theta - (\theta_n - \beta)| \geq D
 \}$,
 and with probability $1$
 \begin{equation}
 \label{eq:symmetry-result}
  \left|
   \int_{\theta_n + \beta - D}^{\theta_n + \beta + D}
    h_{n+1}(\theta)
   \,\d\theta
   -
   \int_{\theta_n - \beta - D}^{\theta_n - \beta + D}
    h_{n+1}(\theta)
   \,\d\theta
  \right|
  \leq
  A \cparam^{11/4}.
 \end{equation}
\end{theorem}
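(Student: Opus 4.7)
My plan is to analyse the density $h_{n+1}$ through the chain-rule decomposition
\begin{equation*}
|\Phi_n'(e^{\sigma + i\theta})|^{-\eta}
 = |\Phi_{n-1}'(f_n(e^{\sigma + i\theta}))|^{-\eta}
 \cdot |f_n'(e^{\sigma + i\theta})|^{-\eta}.
\end{equation*}
The singularities of $\Phi_n'$ on $\T$ split into two ``legit'' points $e^{i(\theta_n \pm \beta)}$ coming from $f_n'$ and a family of ``fake'' points obtained as pullbacks under $f_n$ of the singularities of $\Phi_{n-1}'$. The hypothesis $n < \deadtime$ forces the previous angles to follow a lattice walk of step $\beta$ with fluctuations at most $D$, so every fake singular angle on $\T$ lies at distance at least $\beta - 2D \gg D$ from either legit angle.

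For the concentration estimate \eqref{eq:concentration-result} I would first use the explicit Marshall-Rohde formula for $f_n$ to derive the local expansion
\begin{equation*}
|f_n'(e^{\sigma+i\theta})| \asymp \bigl(\sigma^2 + (\theta - \theta_n \mp \beta)^2\bigr)^{-1/4}
\end{equation*}
near its poles. Since $\eta < -2$ the function $(1+u^2)^{\eta/4}$ is integrable on $\R$, and a change of variable gives $\int_{|\theta - \theta_n - \beta| \leq D} |f_n'|^{-\eta}\,\d\theta \asymp \sigma^{1 + \eta/2}$. A Koebe-type distortion argument for $\Phi_{n-1}$, exploiting the legit-versus-fake separation, provides a uniform two-sided bound on $|\Phi_{n-1}' \circ f_n|^{-\eta}$ on this arc, and this yields the order of magnitude of the contribution to $Z_n$ near a legit pole. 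To estimate the numerator $\int_{F_n}|\Phi_n'|^{-\eta}\,\d\theta$ I would split $F_n$ into short arcs around the fake poles (where $|f_n'|$ is uniformly bounded and $|\Phi_{n-1}'|^{-\eta}$ integrates to $O(\sigma^{1 + \eta/2})$ per pole, by the same computation applied to $\Phi_{n-1}$) and the complementary ``bulk'' region (where both factors are uniformly controlled and $|f_n'|^{-\eta} \lesssim D^{\eta/2}$). Dividing by the lower bound on $Z_n$ and inserting $D = \cparam^{9/2}\sigma^{1/2}$ should produce the claimed $\cparam^4$ bound.

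For the symmetry estimate \eqref{eq:symmetry-result} I would exploit the reflective symmetry of the rotated slit map about the radial line through $e^{i\theta_n}$, which gives $|f_n'(e^{\sigma + i(\theta_n + \varphi)})| = |f_n'(e^{\sigma + i(\theta_n - \varphi)})|$ exactly. The two integrals in \eqref{eq:symmetry-result} then differ only through the values of $|\Phi_{n-1}'|^{-\eta}$ evaluated along the pair of arcs $f_n(\{e^{\sigma + i(\theta_n \pm \beta + \varphi)}: |\varphi| \leq D\})$. Both of these arcs lie in a common disc of radius $O(\beta + D) = O(\cparam^{1/2})$ centred at the slit tip $e^{i\theta_n}(1+d(\cparam))$, a region that is bounded away from every singularity of $\Phi_{n-1}$, so Koebe's distortion theorem controls the relative variation of $|\Phi_{n-1}'|^{-\eta}$ between the two arcs by a factor $1 + O(D/\cparam^{1/2})$. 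Multiplying by the common order of the two integrals and keeping track of the $\sigma^{1/2}$ factor in $D$ gives the $\cparam^{11/4}$ bound.

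The main obstacle is to propagate sharp enough distortion control of $\Phi_{n-1}'$ through the composition of $n - 1 = O(\cparam^{-1})$ conformal maps at the astronomically small regularisation scale $\sigma \leq \sigmaub$. Each invocation of Koebe's theorem requires a definite quantitative separation from the nearest singularity, and the permitted scale for the distortion estimate can degrade rapidly through the composition; the lattice-walk structure supplied by $n < \deadtime$ is the only mechanism keeping singularities separated at scale $\beta$, and the delicate accounting of how this separation must be preserved and re-propagated at every compositional step is what forces the very restrictive upper bound $\sigmaub$ on $\sigma$.
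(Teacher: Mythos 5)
Your high-level picture of the singular set (two ``legit'' poles $e^{i(\theta_n\pm\beta)}$ coming from $f_n'$, plus pullbacks of the older singularities, kept separated at scale $\beta$ by the stopping-time hypothesis) matches the paper, but the core analytic step in your plan is wrong: the image arcs $f_n(\{e^{\sigma+i(\theta_n\pm\beta+\varphi)}:|\varphi|\le D\})$ do \emph{not} lie near the slit tip $e^{i\theta_n}(1+d)$. Since $f_n(e^{i(\theta_n\pm\beta)})=e^{i\theta_n}$ (the \emph{base} of the slit, not the tip), these arcs cluster around $e^{i\theta_n}$, and $e^{i\theta_n}$ is within $D$ of $e^{i\thetalegit_n}$, which is itself a pole of $\Phi_{n-1}'$. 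So both the ``uniform two-sided bound on $|\Phi_{n-1}'\circ f_n|^{-\eta}$ on this arc'' you want for the concentration bound, and the claim that the two symmetry arcs ``lie in a region bounded away from every singularity of $\Phi_{n-1}$'' that you want for the symmetry bound, are false. Koebe distortion gives no uniform control there because the relevant disc is nested right up against a singular point.

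This is precisely what forces the paper to use the full $n$-fold chain-rule decomposition rather than a single step followed by distortion control of $\Phi_{n-1}'$. The key lemma you are missing is the ``sticky'' estimate (Proposition~\ref{thm:sticky}), which propagates the estimate through all $n$ maps: if $w$ is within $L$ of a legit pole, then every intermediate image $\Phi_{n-j,n}(w)$ lies within an explicitly controlled distance of a pole $e^{i\thetalegit_{n-j+1}}$ of $f_{n-j}'$. Combining these via Lemma~\ref{thm:f-prime-estimate} yields (Lemma~\ref{thm:deriv-estimate}) the two-sided bound $|\Phi_n'(e^{\sigma+i(\theta_n\pm\beta+\varphi)})|\asymp A^{\pm n}\cparam^{(1-2^{-n})/2}(\sigma^2+\varphi^2)^{-(1-2^{-n})/2}$. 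Notice the exponent is not $-1/4$ (as your $|f_n'|$-only expansion would give); the accumulated singularities of all $n$ maps drive it towards $-1/2$. That changes the power of $\sigma$ in the lower bound on $Z_n$ to $\sigma^{1-\nu(1-2^{-n})}$ rather than your $\sigma^{1-\nu/2}$. Similarly, the symmetry estimate (Proposition~\ref{thm:symmetry}) is established by telescoping $\log\bigl(|\Phi_n'(z_+)|/|\Phi_n'(z_-)|\bigr)$ over the $n$ factors $f_{n-j}'$ and showing each individual ratio is $1+O(\cparam^{15/4})$; the approximate equality $|\Phi_{n-1}'(f_n(z_+))|\approx|\Phi_{n-1}'(f_n(z_-))|$ holds not because $\Phi_{n-1}'$ is roughly constant (it blows up there) but because $f_n(z_+)$ and $f_n(z_-)$ are at asymptotically equal distances from the nearby pole, which requires the quantitative estimates of Proposition~\ref{thm:sticky} again. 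You identified the obstacle (propagating control through a composition of $O(\cparam^{-1})$ maps) but underestimated its severity: it is not a matter of the admissible Koebe scale degrading, it is that the one-step distortion bound you rely on is already false.
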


In \autoref{sec:tracking} we prove a number of technical
results about the positions of the images and preimages of points
$w \in \Delta$ under the maps $f_j$, $\Phi_{n} = f_1 \circ \dotsb \circ f_n$,
and $\Phi_{j,n} = \Phi_j^{-1} \circ \Phi_n$
when $w$ is close to the poles of $\Phi_n'$.
When dealing with points away from these poles,
we make extensive use of results from \cite{stv-ale}.
Our estimates for the positions of these images
will be useful when we find upper bounds on the derivative
$|\Phi_n'(w)| = |f_n'(w)| \times |f_{n-1}'(\Phi_{n-1,n}(w))| \times \dotsm \times |f_1'(\Phi_{1,n}(w))|$,
using lower bounds on the distance between $\Phi_{j,n}(w)$
and the poles of $f_j'$.

In \autoref{sec:partition-function} we integrate
the pre-normalised density $| \Phi_n'(e^{\sigma + i\theta}) |^{\nu}$
over the regions around $\theta_n \pm \beta$,
and so obtain a lower bound on
\[
Z_n = \int_\T | \Phi_{n}'(e^{\sigma + i \theta})|^{\nu} \,\d \theta.
\]

In \autoref{sec:concentration} and \autoref{sec:small-contributions}
we find upper bounds on $|\Phi_n'(e^{\sigma + i\theta})|$
for $\theta \in F_n$,
and so using the lower bound on $Z_n$ we can establish the bound
\eqref{eq:concentration-result}.

In \autoref{sec:symmetry} we establish the technical results
needed to prove \eqref{eq:symmetry-result}.

\begin{remark}
 In our proof of Theorem \ref{thm:step-convergence},
 the convergence of $h_{n+1}$ to
 $\frac{1}{2}( \delta_{\theta_n + \beta} + \delta_{\theta_n - \beta})$
 does not rely on the convergence of $h_1, \dotsc, h_n$ to these
 \emph{symmetric} discrete measures, only that $n < \deadtime$.
 If we were to use the fact that the angle sequence up until time $n$
 is very close to a simple symmetric random walk,
 then some properties (such as the fact that the longest interval
 on which a SSRW is monotone has length of order $O(\log n)$)
 would allow us to optimise our choice of $\sigma$ further than we have.
 However, for the convergence of our cluster to an SLE$_4$ curve,
 we do require a $\sigma$ which decays at least as quickly as $\cparam^{1/\cparam}$,
 which is already much faster than the fixed power of $\cparam$ used in
 \cite{stv-ale} and elsewhere,
 so we have not attempted to optimise our choice of
 $\sigma \leq \cparam^{2^{2^{1/\cparam}}}$.
 
 If $\sigma$ decays more slowly than $\cparam^{1/\cparam}$,
 but more quickly than $\cparam^{1/2}$,
 then
 heuristic arguments suggest that
 there is a period in which the driving function is a random walk,
 and then a period where the growth is measurable with respect to the random walk
 (i.e. a period of random growth and then a period of deterministic growth).
 We do not believe the resulting cluster
 converges to any
 known object as $\cparam \to 0$.
\end{remark}

In \autoref{sec:alt-particles}
we define a family of particles for which
we believe
analogous versions of our main scaling result
Theorem \ref{thm:main-result}
holds.
We conjecture that
suitably constructed ALE($0,\eta$) models
with $\eta < -2$
will converge to either an SLE$_\kappa$
with $\kappa \geq 4$, or to a uniformly growing disc.
We also believe that every $\kappa \geq 4$ is attained
by this family.

\subsection{Table of notation}
\label{sec:notation}

As we introduce a lot of notation in this paper,
we will give a list here
so that it is possible to look up any
notation appearing in any section
without searching for where it was introduced.\\

\textbf{Subsets of the complex plane}
\begin{description}[leftmargin=1cm, style=nextline]
\item[$\C_\infty$]
The Riemann sphere, $\C \cup \{ \infty \}$
\item[$\D$]
The open unit disc
$\{ z \in \C : |z| < 1 \}$.
\item[$\cudisc$]
The closed unit disc
$\{ z \in \C : |z| \leq 1 \}$.
\item[$\Delta$]
The exterior disc
$\C_\infty \setminus \cudisc$.
\item[$\T$]
The unit circle
$\partial \Delta
=
\{ z \in \C : |z| = 1 \}
=
\{ e^{i\theta} : \theta \in \R \}$.
We will often abuse notation and identify $\T$ with $\R / 2\pi\Z$.
\item[$\partial U$]
The boundary of a set
$U \subseteq \C_\infty$,
defined as
$\partial U
=
\overline{U} \setminus U^\circ$.

\end{description}


\textbf{Conformal maps}
\begin{description}[leftmargin=1cm, style=nextline]
\item[$f$]
The conformal map $f_\cparam \colon \Delta \to \Delta \setminus (1,1+d(\cparam)]$ which we say attaches a particle to the unit circle at the point 1.
\item[$f_j$]
Given a sequence of angles $(\theta_j)_{j \geq 1}$,
$f_j$ attaches a particle
to the unit circle at the point
$e^{i\theta_j}$, so $f_j(z) := e^{i\theta_j}f(e^{-i\theta_j}z)$.
\item[$\beta$]
The distance from $1$
of the points which
are sent to the base
of the particle by $f$.
Defined uniquely as the
$\beta = \beta(\cparam)
\in (0,\pi)$ such that
$f_\cparam(e^{\pm i\beta}) = 1$, 
and obeys
$\beta \sim 2\cparam^{1/2}$
as $\cparam \to 0$.
\item[$d$]
The length of the particle attached by $f$,
defined by $f_\cparam(1) = 1+d(\cparam)$.
Obeys $d \sim \beta \sim 2\cparam^{1/2}$ as $\cparam \to 0$.
\item[$\Phi_n$]
The conformal map which attaches the entire cluster of $n$ particles to the unit circle at the point $1$. Constructed as $f_1 \circ f_2 \circ \dotsb \circ f_n$.
\item[$\Phi_{j,n}$]
The conformal map which attaches
only the most recent $n-j$
particles to the unit circle.
Given by
$\Phi_{j,n} = \Phi_j^{-1} \circ \Phi_n$.
\end{description}

\textbf{Model parameters}
\begin{description}[leftmargin=1cm, style=nextline]
\item[$\eta$]
The parameter controlling the relationship between
our attachment distributions and the harmonic measure
on the boundary of the cluster.
Throughout this paper we take $\eta < -2$.
\item[$\nu$]
We write $\nu = -\eta$. Note that $\nu > 2$ throughout.
\item[$T$]
The total capacity of our cluster, fixed throughout.
\item[$\cparam$]
The capacity of each individual particle attached to the cluster.
We consider in this paper the limit $\cparam \to 0$, so all the following parameters are functions of $\cparam$.
\item[$\sigma$]
A regularisation parameter,
used so that we do not evaluate our conformal maps $\Phi_n'$
at their poles on $\T$, instead evaluating everything on $e^\sigma \T$.
We take $\sigma$ to be a function of $\cparam$,
decaying very rapidly as $\cparam \to 0$:
$\sigma \leq \cparam^{2^{2^{1/\cparam}}}$
\item[$L$]
The maximum distance of $z$ from
$e^{i(\theta_n \pm \beta)}$
at which we rely on
the estimates for $|\Phi_{j,n}(z) - e^{i\theta_{j+1}}|$ we obtain
in the proof of Theorem \ref{thm:pf-bound}.
We take $L$ to be a function of $\cparam$
which does not decay as rapidly as $\sigma$:
$L = \cparam^{2^{N+1}}$.
\item[$D$]
A bound on $\min_{\pm}| \theta_{n+1} - (\theta_n \pm \beta) |$
which holds with high probability.
If this distance exceeds $D$, we stop the process.
We can take $D = \cparam^{9/2} \sigma^{1/2}$.
\end{description}

\textbf{Points in $\T$}
\begin{description}[leftmargin=1cm, style=nextline]
\item[$\thetalegit_j$]
The point in $\T$
which $\theta_j$ was ``supposed to'' attach nearby to,
i.e.\ the unique choice of $\theta_{j-1} \pm \beta$ which
$\theta_j$ is within $D$ of
(if $\theta_j$ is not within $D$ of either,
we will have stopped the process at time $\deadtime \leq j$).
\item[$\thetafake_j$]
The choice of $\theta_{j-1}\pm \beta$ which isn't $\theta_j^\top$.
\item[$\bp^n_j$]
The point on $\T$ corresponding to
the base of the
$j$th 
particle in the cluster $K_n$,
for
$1 \leq j \leq n-1$.
Given by $\bp^n_j
:=
\Phi_{j,n}^{-1}(
e^{i\thetafake_{j+1}}
)$.
See \autoref{fig:composition} for an illustration.
We refer to the points on $\T$ close to $\bp_j^n$ for some $j$
as \emph{singular points} for $h_{n+1}$,
and points away from all $\bp_j^n$ as \emph{regular points}.
\end{description}

\textbf{Probabilistic objects}
\begin{description}[leftmargin=1cm, style=nextline]
\item[$h_{n+1}$]
The density of the distribution on $\T$ of $\theta_{n+1}$, conditional on $\theta_1, \dotsc, \theta_{n}$.
Given by $h_{n+1}(\theta)
\propto
|\Phi_{n}'(e^{\sigma + i\theta})|^{\nu}$.
\item[$Z_n$]
The normalising factor for $h_{n+1}$.
Given by
$Z_n
:=
\int_\T
| \Phi_n'(e^{\sigma + i \theta}) |^{\nu}
\,\d\theta$.
\item[$\P$]
The law of $(\theta_n)_{n \in \N}$.
Implicitly depends on $\cparam$ and $\sigma$.
\item[$\deadtime$]
The first time at which some $\theta_{n+1}$
is further than $D$ from
both of $\theta_n \pm \beta$.
We stop the process when this happens,
but show in \autoref{sec:new-basepoints}
and \autoref{sec:small-contributions}
that with high probability
$\deadtime > N := \rdown{T/\cparam}$.
\end{description}

\textbf{Approximations and bounds}\\
We will use the following notation
when we have two functions depending on a parameter $x$
which is converging to some $x_0 \in \R\cup\{ \pm \infty\}$,
and we want to say the two functions are similar in some way,
or that one bounds the other.
\begin{description}[leftmargin=3cm, style=nextline]
\item[$f(x) \sim g(x)$]
The ratio $\frac{f(x)}{g(x)} \to 1$ as $x \to x_0$.
\item[$f(x) = O(g(x))$]
The ratio
$\left|\frac{f(x)}{g(x)}\right|$ is bounded above as $x \to x_0$,
so there exists a constant $C > 0$ such that
$|f(x)| \leq C|g(x)|$ in a neighbourhood of $x_0$.
The constant $C$ should not depend on any other parameter
or variable.
If the value of $C$ does depend on a parameter $\rho$,
we will write $f(x) = O_\rho( g(x) )$.
Throughout this paper we hold $T$ and $\nu = -\eta$ fixed,
so we may occasionally omit these as subscripts when the constant depends on them.
\item[$f(x) = o(g(x))$]
The ratio $\left| \frac{f(x)}{g(x)}\right| \to 0$ as $x \to x_0$.
\end{description}

When $f$ and $g$ are non-negative
(particularly when they are probabilities
or densities),
we may use the following alternative notations.
\begin{description}[leftmargin=3cm, style=nextline]
\item[$f(x) \lesssim g(x)$]
The same as $f(x) = O(g(x))$,
i.e.\ there exists a constant $C>0$
such that
$f(x) \leq C g(x)$
in a neighbourhood of $x_0$.
\item[$f(x) \ll g(x)$]
The same as $f(x) = o(g(x))$,
i.e.\ $f(x)/g(x) \to 0$
as $x \to x_0$.
\item[$f(x) \asymp g(x)$]
Both $f(x) = O(g(x))$ and $g(x) = O(f(x))$,
i.e.\ there exists constants
$C_1, C_2 > 0$ such that\\
$C_1 g(x) \leq f(x) \leq C_2 g(x)$
in a neighbourhood of $x_0$.
\end{description}
Finally, we may write $f(x) \approx g(x)$,
but this will only be used informally
to mean that $f$ and $g$ behave similarly
in some sense.

\section{Spatial distortion of points}
\label{sec:tracking}

There are several steps we need
to establish our upper bound on $\int h_{n+1}(\theta)\,\d\theta$ in
\eqref{eq:concentration-result},
including precise estimates for $|\Phi_n'|$ near its poles.
We can decompose the derivative
\begin{equation}
\label{eq:cr-decomp}
 \Phi_n'(w)
 =
 \prod_{j=0}^{n-1}
  f_{n-j}'(
   \Phi_{n-j,n}(w)
  )
\end{equation}
where
\begin{equation}
\label{eq:Phikn}
 \Phi_{k,n}
 :=
 \Phi_{k}^{-1} \circ \Phi_n
 =
 f_{k+1} \circ f_{k+2} \circ \dotsc \circ f_n.
\end{equation}
Then we have precise estimates on
$|f'|$ near to its poles $e^{\pm i \beta}$,
and upper bounds away from these poles,
and so we write
\begin{equation}
\label{eq:chain-rule-expansion}
 |\Phi_n'(w)|
 =
 \prod_{j=0}^{n-1}
  \left|f'\left(
   e^{-i\theta_{n-j}}
   \Phi_{n-j,n}(w)
   \right)
  \right|.
\end{equation}
We will show that if $w$ is close
to one of $e^{i(\theta_n \pm \beta)}$,
then for each $j$,
the point
$e^{-i\theta_{n-j}}\Phi_{n-j,n}(w)$
is close to a pole of $|f'|$,
and we will derive specific estimates
on the distance in terms of the distance
$|w - e^{i(\theta_n \pm \beta)}|$.
Conversely, we will show that the only way
for \emph{every} image
$e^{-i\theta_{n-j}}\Phi_{n-j,n}(w)$
to be close to a pole
is for $w$ to be
close to $e^{i(\theta_n \pm \beta)}$,
and so the measure $\d h_{n+1}$ is
concentrated around
$\theta_n + \beta$ and $\theta_n - \beta$.

Firstly, we will establish an estimate
for $|f'|$ close to its poles $e^{\pm i \beta}$,
and a universal upper bound away from
these two points.

\begin{lemma}
\label{thm:f-prime-estimate} 
 There are universal constants $A_1, A_2 > 0$ such that
 for all $\cparam < 1$,
 for $w \in \Delta$,
 if $|w - e^{ i\beta}| \leq \frac{3}{4}\beta$,
 then
 \begin{equation}
 \label{eq:f-prime-estimate}
  A_1 \frac{\beta^{1/2}}{|w - e^{i\beta}|^{1/2}}
  \leq
  |f_\cparam'(w)|
  \leq
  A_2 \frac{\beta^{1/2}}{|w - e^{i\beta}|^{1/2}},
 \end{equation}
 and similarly if
 $|w - e^{-i\beta}| \leq \frac{3}{4}\beta$.
 
 Moreover, there is a third constant $A_3$
 such that if
 $\min \{ | w - e^{i\beta} |, | w - e^{-i\beta} | \}
 > \frac{3}{4}\beta$,
 then
 \[
  |f_\cparam'(w)| \leq A_3.
 \]
\end{lemma}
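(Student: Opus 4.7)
The plan is to derive all three estimates from a single explicit factorisation of $f_\cparam'$. Using the closed-form slit-map expression of \cite{marshall-rohde}, one can differentiate and rearrange to obtain
\[
 f_\cparam'(w) = \frac{(w-1)\, g_\cparam(w)}{\sqrt{(w-e^{i\beta})(w-e^{-i\beta})}},
\]
where the branch of the square root is pinned down by the normalisation $f_\cparam'(w) \to e^\cparam$ as $w \to \infty$, and $g_\cparam$ is analytic on $\overline{\Delta}$. Structurally, the zero $(w-1)$ reflects the fact that $1$ is the preimage of the tip $1+d$ (a branch point of the image domain with angle $2\pi$), while the square-root singularities at $e^{\pm i \beta}$ come from the two-fold preimage of the base of the slit at $1$. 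A key input from the Marshall--Rohde formula is that $g_\cparam(w)$ is bounded above and below by positive constants \emph{uniformly in} both $w \in \overline{\Delta}$ and $\cparam \in (0,1)$.

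Given this factorisation, the near-pole estimate \eqref{eq:f-prime-estimate} reduces to triangle-inequality bookkeeping. If $|w - e^{i\beta}| \leq \tfrac{3}{4}\beta$, then using $|e^{i\beta}-e^{-i\beta}| = 2\sin\beta \sim \beta$ and $|e^{i\beta} - 1| = 2\sin(\beta/2) \sim \beta$ (as $\cparam \to 0$), the triangle inequality gives $|w - e^{-i\beta}| \asymp \beta$ and $|w-1| \asymp \beta$ for all sufficiently small $\cparam$. Substituting into the factorisation,
\[
 |f_\cparam'(w)| \asymp \frac{\beta}{\sqrt{\beta \cdot |w-e^{i\beta}|}} = \sqrt{\frac{\beta}{|w-e^{i\beta}|}},
\]
which is exactly the content of \eqref{eq:f-prime-estimate}. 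The symmetric case $|w - e^{-i\beta}| \leq \tfrac{3}{4}\beta$ is identical, and can in fact be deduced from the reflection symmetry $f_\cparam(\bar w) = \overline{f_\cparam(w)}$.

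For the away-from-poles estimate, I would show that the rational function
\[
 \frac{(w-1)^2}{(w-e^{i\beta})(w-e^{-i\beta})}
\]
is bounded by a universal constant on $\{ w \in \Delta : \min(|w-e^{i\beta}|, |w-e^{-i\beta}|) > \tfrac{3}{4}\beta \}$, uniformly in $\cparam$. This rational function tends to $1$ at $\infty$, equals $\asymp 1$ on the boundary of the exclusion region (by the same triangle-inequality computation as above), and is continuous elsewhere; a maximum-modulus or direct calculus argument then yields the universal bound. Combining with the uniform bound on $|g_\cparam|$ produces $|f_\cparam'(w)| \leq A_3$.

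The main obstacle is the very first step: verifying the uniform (in $\cparam$) bounds on $g_\cparam$. This requires careful bookkeeping with the Marshall--Rohde closed form to ensure that the $\beta^{1/2}$ scaling in \eqref{eq:f-prime-estimate} is extracted correctly. A naïve decomposition tends to produce spurious powers of $\beta$ that must be absorbed into the $(w-1)$ factor and the square root denominator, and the factorisation above is designed precisely to make this absorption automatic. Once the factorisation and its uniform bounds are in place, everything else reduces to elementary estimates on $|w - e^{\pm i \beta}|$ and $|w-1|$.
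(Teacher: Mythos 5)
Your argument is correct, but note that the paper does not actually prove this lemma: its ``proof'' consists entirely of the citation ``See Lemma 5 of \cite{stv-ale}.'' What you have done is reconstruct the underlying argument, and it is the right one. In fact your factorisation is not merely a structural ansatz but an exact identity: later in the paper (proof of Proposition \ref{thm:symmetry}) the author quotes Lemma 4 of \cite{stv-ale}, namely
\[
 f'(z) = \frac{f(z)}{z}\cdot\frac{z-1}{(z-e^{i\beta})^{1/2}(z-e^{-i\beta})^{1/2}},
\]
so your $g_\cparam(w)$ is literally $f_\cparam(w)/w$, and its uniform two-sided bound is immediate: $w/f_\cparam(w)$ is holomorphic on $\Delta\cup\{\infty\}$, equals $e^{-\cparam}<1$ at $\infty$, and has modulus $1/|f_\cparam|\leq 1$ on $\T$, so $|f_\cparam(w)/w|\geq 1$; the same reasoning applied to $f_\cparam(w)/w$ gives $|f_\cparam(w)/w|\leq 1+d(\cparam)$, bounded for $\cparam<1$. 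Your worry about ``spurious powers of $\beta$'' is therefore unfounded once the factorisation is written this way: $g_\cparam$ carries no $\beta$-dependence at all, and the $\beta^{1/2}$ in \eqref{eq:f-prime-estimate} falls out cleanly from $|w-e^{-i\beta}|^{1/2}\asymp\beta^{1/2}$. The only imprecision worth flagging is that you assert $|w-e^{-i\beta}|\asymp\beta$ and $|w-1|\asymp\beta$ ``for all sufficiently small $\cparam$,'' while the lemma needs them uniformly for all $\cparam<1$. The triangle inequality gives $|w-1|\geq 2\sin(\beta/2)-\tfrac{3}{4}\beta$, so the constant $\tfrac{3}{4}$ in the hypothesis is chosen precisely so that the margin $2\sin(\beta/2)/\beta - \tfrac{3}{4}$ stays positive over the whole range $\beta\in(0,\beta(1)]$ (at $\cparam=1$, $\beta\approx1.84$ and $2\sin(\beta/2)/\beta\approx 0.86>0.75$); this is a finite calculus check you should carry out rather than hide behind ``sufficiently small.'' With that verification supplied, the proof is complete and is the natural one.
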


\begin{proof}
 See Lemma 5 of \cite{stv-ale}.
\end{proof}

This lemma tells us that
the derivative $| \Phi_n'(w) |$
will be large only when many of the points
$e^{-i\theta_{n-j}}\Phi_{n-j,n}(w)$
in \eqref{eq:chain-rule-expansion} are
close to one of the poles $e^{\pm i \beta}$.
We will next introduce some technical estimates
which will allow us to determine
for which points $w$ this is true.

\begin{remark}
If we imagine an idealised path in which
$|\theta_{i+1} - \theta_i| = \beta$
for all $i$, then
$f_n(e^{i(\theta_n \pm \beta)})
= e^{i\theta_{n-1}}$,
and 
$f_{n-1}(e^{i\theta_{n-1}})
= e^{i\theta_{n-2}}$,
and so on.
Hence
$\Phi_{n-j,n}(e^{i(\theta_n \pm \beta)})
=
e^{i\theta_{n-j+1}}
=
e^{i(\theta_{n-j} + s_{n-j} \beta)}$,
where $s_{n-j} \in \{ \pm 1 \}$.
So if a point $w$ is close to one of
$e^{i(\theta_n \pm \beta)}$ 
then, as $f$ is continuous when
extended to $\overline{\Delta}$,
each of the points in
\eqref{eq:chain-rule-expansion}
is close to
$e^{i s_{n-j} \beta}$,
but continuity alone does not allow us to make precise
what we mean by ``$w$ is close to $e^{i(\theta_n \pm \beta)}$'',
so to estimate the size of $|\Phi_n'(w)|$,
we need a precise estimate for
$|f(w) - f(e^{i\beta})|$
in terms of
$|w - e^{i\beta}|$.
\end{remark}

\begin{lemma}
\label{thm:distance-estimate}
 For $w \in \Delta$,
 for all $\cparam < 1$,
 if $| w - e^{i\beta} | \leq \beta / 2$,
 then
 \begin{equation}
 \label{eq:distance-estimate}
  \begin{aligned}
   |f_\cparam(w) - 1|
   =\ 
   &2(e^{\cparam} - 1)^{1/4}
   |w - e^{i\beta}|^{1/2}\\
   &\times
   \left(
    1 +
    O\left[
     \frac{|w - e^{i\beta}|}{\cparam^{1/2}}
     \vee
     \cparam^{1/4} |w - e^{i\beta}|^{1/2}
    \right]
   \right).
  \end{aligned}
 \end{equation}
\end{lemma}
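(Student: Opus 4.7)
The plan is to reduce the computation to the upper half-plane, where the slit map becomes explicit. I would conjugate by the Cayley-type transformation $M(z) = i(z-1)/(z+1)$, which sends $\Delta$ conformally onto $\H$ with $M(1) = 0$, $M(\infty) = i$, and $M(e^{\pm i\beta}) = \mp \tan(\beta/2)$. Using the identity $\cos^2(\beta/2) = e^{-\cparam}$ (immediate from the closed-form expression for $e^{i\beta}$ given earlier in the paper), one checks that
\[
 f_\cparam(w) = M^{-1}\!\left( e^{-\cparam/2}\sqrt{M(w)^2 - (e^\cparam - 1)} \right),
\]
since both sides are conformal bijections $\Delta \to \Delta \setminus (1, 1+d]$ with the same normalization $e^\cparam z + O(1)$ at $\infty$, so they agree by uniqueness; in particular $b := \sqrt{e^\cparam - 1}$ satisfies $b = \tan(\beta/2)$ and $M(e^{i\beta}) = -b$.

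From $M^{-1}(\zeta) - 1 = 2\zeta/(i - \zeta)$, setting $u := M(w)$ and $F(u) := e^{-\cparam/2}\sqrt{u^2 - b^2}$, the identity $f_\cparam(w) - 1 = 2 F(u)/(i - F(u))$ yields $|f_\cparam(w) - 1| = 2|F(u)|\,(1 + O(|F(u)|))$ once $|F(u)|$ is small. I would then expand $F$ near its branch point by writing $v := u + b$ and factoring $F(u) = e^{-\cparam/2}\sqrt{-v(2b - v)}$, which gives $|F(u)| = e^{-\cparam/2}(2b)^{1/2}|v|^{1/2}(1 + O(|v|/b))$ for $|v| \le b/2$. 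Pulling $v$ back to $w - e^{i\beta}$ uses that $M$ is analytic at $e^{i\beta}$ with $|M'(e^{i\beta})| = 1/(2\cos^2(\beta/2)) = e^\cparam/2$, so by Taylor expansion $|v| = (e^\cparam/2)|w - e^{i\beta}|(1 + O(|w - e^{i\beta}|))$.

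Combining the three steps, the leading prefactor is $2 \cdot e^{-\cparam/2} \cdot (2b)^{1/2} \cdot (e^\cparam/2)^{1/2} = 2\sqrt{b} = 2(e^\cparam - 1)^{1/4}$, exactly as asserted. The error contributions are $O(|v|/b) = O(|w-e^{i\beta}|/\cparam^{1/2})$ (using $b \sim \cparam^{1/2}$), $O(|w - e^{i\beta}|)$ from the Taylor remainder of $M$ (absorbed into the previous since $\cparam < 1$), and $O(|F(u)|) = O(\cparam^{1/4}|w-e^{i\beta}|^{1/2})$ from the $1/(i - F(u))$ expansion; these assemble into the stated bound $O(|w-e^{i\beta}|/\cparam^{1/2} \vee \cparam^{1/4}|w-e^{i\beta}|^{1/2})$. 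The only place that requires care is the constant: the factor $e^{-\cparam/2}$ hidden in the normalization of $F$ must cancel precisely against the factor $e^{\cparam/2}$ coming from $\sqrt{|M'(e^{i\beta})|}$ to leave the clean constant $2(e^\cparam - 1)^{1/4}$, with no spurious exponential. Everything else is straightforward bookkeeping once the half-plane formula is in hand.
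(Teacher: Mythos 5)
Your proof is correct and follows essentially the same route as the paper's: conjugate by the M\"obius map $i(z-1)/(z+1)$ to the upper half-plane, invoke the explicit slit map $e^{-\cparam/2}\sqrt{\zeta^2 - (e^\cparam-1)}$, and expand each of the three constituent maps around the relevant points $e^{i\beta}$, $-\sqrt{e^\cparam-1}$, and $0$, recombining the prefactors and error terms exactly as in \eqref{eq:hp-estimate}--\eqref{eq:ed-estimate}. The only cosmetic difference is that you organise the bookkeeping via $b=\tan(\beta/2)=\sqrt{e^\cparam-1}$ and $|M'(e^{i\beta})|=e^\cparam/2$, whereas the paper computes the same quantities directly from $|e^{i\beta}+1|^2 = 4e^{-\cparam}$.
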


\begin{proof}
 We will work with the half-plane
 slit map
 $\t{f}_\cparam
 \colon \H
 \to
 \H
 \setminus
 (0, i \sqrt{1 - e^{-\cparam}}\,]$
 by conjugating $f$ with the
 M\"obius map
 $m_\H \colon \Delta \to \H$
 given by
 \begin{equation}
 \label{eq:ed-to-hp}
  m_{\H}(w) = i \frac{w - 1}{w + 1},
 \end{equation}
 and its inverse
 \begin{equation}
 \label{eq:hp-to-ed}
  m_{\Delta}(z)
  := m_{\H}^{-1}(z)
  = \frac{1 - iz}{1 + iz}.
 \end{equation}
 The benefit of this is that
 $\t{f}_\cparam$ has a simple
 explicit form:
 \begin{equation}
 \label{eq:half-plane-slit-map}
  \t{f}_\cparam(\zeta)
  =
  e^{-\cparam/2}
  \sqrt{
   \zeta^2 - (e^{\cparam} - 1)
  }
 \end{equation}
 where the branch of the
 square root is given by
 $\arg \colon
 \C \setminus [0, \infty)
 \to
 (0, 2\pi)$,
 so we write
 \[
  f_\cparam = m_\Delta \circ \t{f}_\cparam \circ m_\H
 \]
 and will derive a separate estimate
 for each of the three maps.
 
 As $w$ is close to
 $e^{i\beta} = 2e^{-\cparam} - 1
 + 2i e^{-\cparam}\sqrt{e^\cparam - 1}$,
 we will expand each map about
 the images (given by a simple calculation)
 $m_\H(e^{i\beta}) = -\sqrt{e^\cparam - 1}$,
 $\t{f}_\cparam( - \sqrt{e^\cparam - 1} ) = 0$,
 and
 $m_\Delta(0) = 1$.
 Our calculations will show that $m_\Delta$
 and $m_\H$ behave like scaling by a constant
 close to the relevant points, and that the
 behaviour of $f_\cparam$ seen in
 \eqref{eq:distance-estimate}
 is due to the behaviour of $\t{f}_\cparam$
 close to $\pm \sqrt{e^\cparam - 1}$.
 
 First, when $w = e^{i\beta} + \delta$,
 \begin{align}
  \nonumber
  | m_\H(w) - m_\H(e^{i\beta}) |
  &=
  \left|
   \frac{2\delta}{
    (e^{i\beta}+1+\delta)(e^{i\beta} + 1)
   }
  \right|\\
  \label{eq:hp-estimate}
  &=
  \frac{1}{2}e^\cparam
  |\delta|
  (1 + O(|\delta|))
 \end{align}
 since a simple calculation shows that
 $|e^{i\beta} + 1|^2 = 4e^{-\cparam}$.
 
 Next, we will evaluate $\t{f}_\cparam$
 at a point close to one of the two preimages
 of $0$, $\pm \sqrt{e^\cparam - 1}$:
 \begin{align}
  \nonumber
  \left|
   \t{f}_\cparam( \pm\sqrt{e^\cparam - 1} + \lambda )
  \right|
  &=
  e^{-\cparam/2}
  \left|
   \sqrt{
    \pm 2\sqrt{e^\cparam - 1} \lambda + \lambda^2
   }
  \right|\\
  \label{eq:hpslit-estimate}
  &=
  \sqrt{2} e^{-\cparam/2}
  (e^\cparam - 1)^{1/4}
  |\lambda|^{1/2}
  \left(
   1 +
   O\left(
    \frac{|\lambda|}{
    \cparam^{1/2}
    }
   \right)
  \right).
 \end{align}
 
 Finally, for a small $z \in \H$,
 \begin{align}
 \label{eq:ed-estimate}
  |m_\Delta(z) - 1|
  =
  \left|
   \frac{1 - iz}{1 + iz} - 1
  \right|
  =
  \left|
   \frac{-2iz}{1+iz}
  \right|
  =
  2|z|( 1 + O(|z|) ).
 \end{align}
 
 Then for $w$ close to $e^{i\beta}$,
 applying \eqref{eq:hp-estimate},
 \eqref{eq:hpslit-estimate} and
 \eqref{eq:ed-estimate} in turn,
 we obtain
 \begin{align*}
  |f(w) - 1|
  =\
  &2(e^\cparam - 1)^{1/4}
  |w - e^{i\beta}|^{1/2}\\
  \times
  &\left(1 + O\left(
   \frac{|w - e^{i\beta}|}{\cparam^{1/2}}
  \right)\right)
  \left(1 + O\left(
   \cparam^{1/4}|w - e^{i\beta}|^{1/2}
  \right)\right).
 \end{align*}
 Then for $\cparam^{3/2} \leq |w - e^{i\beta}| \leq \beta/2$,
 we have the estimate \eqref{eq:distance-estimate}
 with error term of order $\cparam^{-1/2} |w - e^{i\beta}|$,
 and for $|w - e^{i\beta}| \leq \cparam^{3/2}$
 the error term has order $\cparam^{1/4} |w-e^{i\beta}|^{1/2}$.
\end{proof}

\begin{remark}
 Unlike most results in this section,
 we will not use the following lemma
 in Section \ref{sec:new-basepoints},
 but it will be very useful in Section \ref{sec:old-basepoints}.
 We include it here and omit the proof as it is very similar 
 to Lemma \ref{thm:distance-estimate}.
\end{remark}

\begin{lemma}
\label{thm:inverse-distance-estimate}
 For all $\cparam < 1$,
 if $z \in \Delta \setminus (1, 1+d(\cparam)]$
 has $|z-1| \leq \cparam$, then
 \[
  \min_{\pm} | f^{-1}(z) - e^{\pm i \beta} |
  =
  \frac{ | z - 1 |^2 }{ 4(e^\cparam - 1)^{1/2} }
  \left(
   1 + O\left(
    |z - 1|
   \right)
  \right).
 \]
\end{lemma}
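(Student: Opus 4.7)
The plan is to follow exactly the same decomposition $f_\cparam = m_\Delta \circ \tilde{f}_\cparam \circ m_\H$ used in the proof of Lemma \ref{thm:distance-estimate}, but invert each step. That is, I would write
\[
 f_\cparam^{-1} = m_\H^{-1} \circ \tilde{f}_\cparam^{-1} \circ m_\Delta^{-1} = m_\Delta \circ \tilde{f}_\cparam^{-1} \circ m_\H,
\]
and estimate each of the three factors when applied, respectively, to a point $z$ near $1 \in \overline{\Delta}$, then to $m_\H(z)$ near $0 \in \overline{\H}$, then to $\tilde{f}_\cparam^{-1}(m_\H(z))$ near $\pm\sqrt{e^\cparam-1}$. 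Because $m_\H$ and $m_\Delta$ are M\"obius maps, their behaviour near these points is governed by the derivative and is therefore essentially linear, while the interesting squaring is concentrated in $\tilde{f}_\cparam^{-1}$.

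For the first map, a direct computation of $m_\H'(1) = i/2$ gives $|m_\H(z)| = \tfrac{1}{2}|z-1|(1+O(|z-1|))$. For the middle step, the equation $\tilde{f}_\cparam(\zeta) = w$ rearranges to $\zeta^2 = e^\cparam w^2 + (e^\cparam - 1)$, so the two preimages of $w$ are $\pm\sqrt{e^\cparam w^2 + (e^\cparam - 1)}$, which gives
\[
 \bigl|\tilde{f}_\cparam^{-1}(w) \mp \sqrt{e^\cparam - 1}\bigr|
 = \sqrt{e^\cparam - 1}\left|\sqrt{1 + \tfrac{e^\cparam w^2}{e^\cparam-1}} - 1\right|
 = \frac{e^\cparam |w|^2}{2\sqrt{e^\cparam-1}}\bigl(1 + O(|w|^2/\cparam)\bigr)
\]
for the branch whose sign matches the sign in the $\min_\pm$. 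Finally, for the third map one checks $m_\Delta(\pm\sqrt{e^\cparam-1}) = e^{\mp i\beta}$ and $|m_\Delta'(\pm\sqrt{e^\cparam-1})| = 2e^{-\cparam}$, yielding a linear estimate $|m_\Delta(\zeta) - e^{\mp i\beta}| = 2e^{-\cparam}|\zeta \mp \sqrt{e^\cparam-1}|(1 + O(|\zeta \mp \sqrt{e^\cparam-1}|))$.

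Composing the three bounds and collecting the prefactors gives
\[
 \min_{\pm}|f_\cparam^{-1}(z) - e^{\pm i\beta}|
 = 2e^{-\cparam} \cdot \frac{e^\cparam}{2\sqrt{e^\cparam-1}} \cdot \tfrac{1}{4}|z-1|^2 \cdot (1 + O(|z-1|))
 = \frac{|z-1|^2}{4\sqrt{e^\cparam-1}}(1 + O(|z-1|)),
\]
which is the claimed asymptotic. The $\min_{\pm}$ comes directly from the two choices of sign for the square root in $\tilde{f}_\cparam^{-1}$, corresponding to the two preimages $e^{\pm i \beta}$ of $1$ under $f_\cparam$; for a given $z$ on one side of the slit the ``correct'' branch is selected, and the other preimage is far away (at distance $\asymp \beta \gg |z-1|^2/\sqrt{e^\cparam-1}$), so the minimum is attained at the branch matching the side of the slit.

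The only delicate point is tracking the error terms through the composition. Under the hypothesis $|z-1| \leq \cparam$, the intermediate point $w = m_\H(z)$ has $|w| \lesssim \cparam$, so $|w|^2/\cparam \lesssim \cparam$ and $|\zeta \mp \sqrt{e^\cparam-1}| \lesssim \cparam^{3/2}$, all of which are absorbed into the $O(|z-1|)$ error — this should be the main thing to verify carefully, but it is completely analogous to the corresponding bookkeeping in Lemma \ref{thm:distance-estimate}, which is why the proof can reasonably be omitted.
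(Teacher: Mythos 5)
Your proof is correct and is exactly the argument the paper intends when it says the proof is omitted as ``very similar to Lemma \ref{thm:distance-estimate}'': you invert the decomposition $f_\cparam = m_\Delta \circ \widetilde{f}_\cparam \circ m_\H$ and estimate each of the three factors near its fixed image point, with the squaring concentrated in $\widetilde{f}_\cparam^{-1}$. The error bookkeeping under $|z-1| \leq \cparam$ is sound, since $|w|^2/\cparam \lesssim |z-1|^2/\cparam \leq |z-1|$ and $|\zeta \mp \sqrt{e^\cparam-1}| \lesssim |z-1|^2/\sqrt{\cparam} \leq |z-1|$, so all intermediate errors are absorbed into $O(|z-1|)$, and the prefactor $2e^{-\cparam} \cdot \frac{e^\cparam}{2\sqrt{e^\cparam-1}} \cdot \frac{1}{4} = \frac{1}{4(e^\cparam-1)^{1/2}}$ matches the statement.
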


Now we have all the technical results we need
in order to prove our lower bound on
$| \Phi_n'(w) |$
when $w$ is close to one of the two
``most recent basepoints''
$e^{i(\theta_n \pm \beta)}$.
We will derive the bound itself
in \autoref{sec:partition-function},
and here we will show that
each of the points
$\Phi_{n-j,n}(w)$
in \eqref{eq:chain-rule-expansion}
is close to $e^{i\theta_{n-j+1}}$.

\begin{proposition}
\label{thm:sticky}
 Let $L = L(\cparam, N) = \cparam^{2^{N+1}}$,
 and let $n < N \wedge \deadtime$.
 If $\delta :=
 \min | w - e^{i(\theta_n \pm \beta)} | \leq 2L$,
 and $|w| \geq e^{\sigma}$,
 then for all $1 \leq j \leq n$,
 \begin{equation}
 \label{eq:sticky}
  \left|
   \Phi_{n-j,n}(w) - e^{i\thetalegit_{n-j+1}}
  \right|
  =
  \left[
   \discoeff
  \right]^{2(1 - 2^{-j})}
  \delta^{2^{-j}}
  (1 + O(\cparam^4)).
 \end{equation}
\end{proposition}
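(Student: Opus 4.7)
My plan is to prove the proposition by induction on $j$, at each step applying Lemma \ref{thm:distance-estimate} to one map in the decomposition $\Phi_{n-j,n} = f_{n-j+1} \circ \Phi_{n-j+1,n}$. Write $w_k := \Phi_{n-k,n}(w)$ and $\delta_k := |w_k - e^{i\thetalegit_{n-k+1}}|$, so that $w_0 = w$ and $\delta_0 = \delta$, and let $R_k := [\discoeff]^{2(1-2^{-k})}\delta^{2^{-k}}$ be the target value; the claim is then that $\delta_k = R_k(1 + O(\cparam^4))$ for $1 \leq k \leq n$.

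For the base case $k = 1$, the relevant map is $\Phi_{n-1,n} = f_n = e^{i\theta_n} f(e^{-i\theta_n}\,\cdot\,)$, and by the definition of $\delta$ the point $e^{-i\theta_n}w$ lies within distance $\delta \leq L$ of one of the two poles $e^{\pm i\beta}$ of $f$. Lemma \ref{thm:distance-estimate} then gives $|f_n(w) - e^{i\theta_n}| = \discoeff \cdot \delta^{1/2}(1 + O(\cparam^{1/4}\delta^{1/2}))$, while the stopping condition $n < \deadtime$ provides $|e^{i\theta_n} - e^{i\thetalegit_n}| \leq D$, which is absorbed into the relative error. The inductive step is identical in structure: since $\thetalegit_{n-k+2} = \theta_{n-k+1} \pm \beta$, the point $e^{-i\theta_{n-k+1}}w_{k-1}$ sits at distance exactly $\delta_{k-1}$ from one of the poles $e^{\pm i\beta}$, so Lemma \ref{thm:distance-estimate} yields $|w_k - e^{i\theta_{n-k+1}}| = \discoeff \cdot \delta_{k-1}^{1/2}(1 + O(\cparam^{1/4}\delta_{k-1}^{1/2}))$; substituting the inductive formula for $\delta_{k-1}$ collapses the $\discoeff$ factors to the exponent $2(1-2^{-k})$, and shifting to $\thetalegit_{n-k+1}$ again costs at most $D$. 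Throughout the induction one also checks that $\delta_{k-1} \leq \tfrac{3}{4}\beta$, so that Lemma \ref{thm:distance-estimate} actually applies.

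The main obstacle is controlling the compounded error. Writing $\delta_k = R_k(1 + E_k)$, the estimates above give a recursion of the form $E_k \leq E_{k-1}/2 + O(\eta_{k-1}) + O(D/R_k)$, where the factor $\tfrac{1}{2}$ arises because the square root in Lemma \ref{thm:distance-estimate} halves the inherited relative error. The choice $L = \cparam^{2^{N+1}}$ is calibrated so that $R_{k-1} \leq C\cparam^{(1-2^{-(k-1)})/2 + 2^{N+2-k}}$, and hence $\eta_{k-1} = O(\cparam^{1/4}R_{k-1}^{1/2})$ is largest at the final step $k = n$; the constraint $n < N$ forces $2^{N+2-n} \geq 8$, giving $\eta_{n-1} = O(\cparam^{9/2}) = O(\cparam^4)$, and the earlier $\eta$-terms are super-exponentially smaller. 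The shift contributions $D/R_k$ turn out to be of order $O(\cparam^{4 + 2^{-(k+1)}})$ using $D = \cparam^{9/2}\sigma^{1/2}$ together with the lower bound $\delta \geq \sigma$ (which follows from $|w| \geq e^\sigma$). Summing the geometric series arising from the halving then yields $E_n = O(\cparam^4)$, as required.
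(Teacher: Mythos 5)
Your proof is correct and follows essentially the same route as the paper's: induction on $j$, peeling off one map at a time via $\Phi_{n-j,n} = f_{n-j+1}\circ\Phi_{n-j+1,n}$, applying Lemma \ref{thm:distance-estimate} at each step, and absorbing the $D$-shift and the new lemma error into the $O(\cparam^4)$ term using $\delta\geq\sigma$ and $\delta^{2^{-j}}\leq L^{2^{-j}}\lesssim\cparam^4$. The one genuine addition is your explicit recursion $E_k \leq E_{k-1}/2 + O(\eta_{k-1}) + O(D/R_k)$ and the geometric-series argument — the paper leaves the uniformity of the $O(\cparam^4)$ constant over the $n$ inductive steps implicit, so your version is a bit more careful on that point (and the exponent $9/2$ you quote for $\eta_{n-1}$ is really $17/4$ in the worst case $n=2$, but both are $\leq 4$, so nothing breaks).
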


Before we begin the proof we will introduce some notation
in order to make the argument easier to follow.

\begin{definition}
\label{def:thetalegit}
\label{def:thetafake}
 By definition of $\deadtime$,
 for each $n < \deadtime$
 one of the two angles $\theta_{n-1} \pm \beta$
 is within distance $D$ of $\theta_n$.
 We will call the closer of the two angles $\thetalegit_n$,
 and the other angle $\thetafake_n$.
\end{definition}

\begin{proof}[Proof of Proposition \ref{thm:sticky}]
 We will proceed by induction on $j$.
 For $j = 1$, the estimate \eqref{eq:sticky}
 follows directly from Lemma \ref{thm:distance-estimate}.
 For a given $1 \leq j \leq n-1$,
 assume that
 \[
  | \Phi_{n-j,n}(w) - e^{i\thetalegit_{n-j+1}} |
  =
  \left[
   \discoeff
  \right]^{2(1 - 2^{-j})}
  \delta^{2^{-j}}
  (1 + O(\cparam^4)),
 \]
 (as $| \theta_n - \thetalegit_n | < D \ll \cparam^4$,
 this certainly holds for $j = 1$)
 and then by the triangle inequality,
 since $| e^{i\theta_{n-j}} - e^{i\thetalegit_{n-j}}|
 \leq
 | \theta_{n-j} - \thetalegit_{n-j} |
 < D$, we have
 \begin{align*}
  | \Phi_{n-j-1,n}(w) - e^{i\theta_{n-j}} |
  -
  D
  \leq
  | &\Phi_{n-j-1,n}(w) - e^{i\thetalegit_{n-j}} | \leq
  | \Phi_{n-j-1,n}(w) - e^{i\theta_{n-j}} |
  +
  D.
 \end{align*}
 Now by Lemma \ref{thm:distance-estimate},
 \begin{align*}
  | &\Phi_{n-j-1,n}(w) - e^{i\theta_{n-j}} |
  =
  | f_{n-j}(\Phi_{n-j,n}(w)) - f_{n-j}(e^{i\thetalegit_{n-j+1}}) |\\
  &=
  | f( e^{-i\thetalegit_{n-j+1}} \Phi_{n-j,n}(w) ) - 1 |\\
  &=
  \discoeff
  | e^{-i\thetalegit_{n-j+1}} \Phi_{n-j,n}(w) - 1 |^{1/2}
  ( 1 +
   O(
    \cparam^{1/4} | e^{-i\thetalegit_{n-j+1}} \Phi_{n-j,n}(w) - 1 |^{1/2}
   )
  )\\
  &=
  \left[ \discoeff \right]^{
   1 + (1 - 2^{-j})
  }
  \delta^{2^{-(j+1)}}
  (1 +
   O(
    \cparam^4
   )
  )
  (1 +
   O(
    \cparam^{3/8} \delta^{2^{-(j+1)}}
   )
  )\\
  &=
  \left[ \discoeff \right]^{
   2(1 - 2^{-(j+1)})
  }
  \delta^{2^{-(j+1)}}
  (1 +
   O(
    \cparam^4
   )
  )
 \end{align*}
 and the second error term is absorbed since
 $\delta^{2^{-(j+1)}}
 \leq
 (2L)^{2^{-(j+1)}}
 \leq
 \cparam^4$.
 
 Now as
 $\delta
 =
 |w - e^{i(\theta_n \pm \beta)}|
 \geq
 |w| - 1
 \geq \sigma$,
 and $D \sim \cparam^{9/2}\sigma^{1/2}$
 (see \autoref{sec:notation}),
 we have
 \begin{align*}
  | \Phi_{n-j-1,n}(w) - e^{i\thetalegit_{n-j}} |
  &=
  | \Phi_{n-j-1,n}(w) - e^{i\theta_{n-j}} |
  \left(1 + 
   O\left(
    \frac{D}{\cparam^{\frac{1}{2}(1 - 2^{-(j+1)})} \delta^{2^{-(j+1)}}}
   \right)
  \right)\\
  &=
  | \Phi_{n-j-1,n}(w) - e^{i\theta_{n-j}} |
  \left(1 + 
   O\left(
    \cparam^4 \sigma^{1/4}
   \right)
  \right),
 \end{align*}
 and hence our result holds for all $1 \leq j \leq n$
 by induction.
\end{proof}

\section{The newest basepoints}
\label{sec:new-basepoints}

\subsection{A lower bound on the normalising factor}
\label{sec:partition-function}

We defined in \eqref{eq:density} the density function $h_{n+1}(\theta)$ and the $n$th normalising factor
\begin{equation}
\label{eq:partition-function}
Z_n = \int_{\T} | \Phi_{n}'(e^{\sigma + i\theta})|^{-\eta}\,\d \theta.
\end{equation}
If we are going to find upper bounds on $h_{n+1}$ by bounding $|\Phi_n'|$,
then we will need to have some lower bound on the normalising factor $Z_n$.
In this section, we will obtain a lower bound on $Z_n$,
and it will give us our upper bound on $h_{n+1}$ in \autoref{sec:old-basepoints}.
First, we will need a good estimate for $|\Phi_n'|$
around the main poles $e^{i(\theta_n \pm \beta)}$.

\begin{lemma}
\label{thm:deriv-estimate}
 Let $n < \rdown{T/\cparam} \wedge \deadtime$.
 There are constants $A_1, A_2 > 0$ such that
 for any $\cparam < 1$,
 whenever $|\varphi| < L$,
 \[
  A_1^n \frac{\cparam^{\frac{1}{2}(1 - 2^{-n})}}{
  ( \sigma^2 + \varphi^2 )^{\frac{1}{2}(1 - 2^{-n})}
  }
  \leq
  \left|
   \Phi_n'\left(
    e^{\sigma + i( \theta_n \pm \beta + \varphi )}
   \right)
  \right|
  \leq
  A_2^n \frac{\cparam^{\frac{1}{2}(1 - 2^{-n})}}{
  ( \sigma^2 + \varphi^2 )^{\frac{1}{2}(1 - 2^{-n})}
  }
 \]
 provided that $\sigma = \sigma(\cparam) \leq
 L$.
\end{lemma}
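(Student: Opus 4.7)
The plan is to reduce the estimate to the per-particle bound in Lemma \ref{thm:f-prime-estimate} via the chain rule expansion \eqref{eq:chain-rule-expansion}, then multiply $n$ such factors using the distance tracking from Proposition \ref{thm:sticky}. Write $w = e^{\sigma + i(\theta_n \pm \beta + \varphi)}$ and set
\[
\delta := |w - e^{i(\theta_n \pm \beta)}| = |e^{\sigma + i\varphi} - 1|.
\]
A Taylor expansion gives $\delta = \sqrt{\sigma^2 + \varphi^2}\,(1 + O(\sqrt{\sigma^2+\varphi^2}))$, so in particular $\delta \asymp \sqrt{\sigma^2 + \varphi^2}$ and $\delta \leq L\sqrt{2}$. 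The bound on $|\Phi_n'(w)|$ will first be produced in terms of $\delta$ and then translated back into $\sigma^2+\varphi^2$.

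Next, set $u_j := e^{-i\theta_{n-j}}\Phi_{n-j,n}(w)$, so that $|\Phi_n'(w)| = \prod_{j=0}^{n-1}|f'(u_j)|$. For $j = 0$, $\Phi_{n,n} = \mathrm{id}$ gives $|u_0 - e^{\pm i\beta}| = \delta$. For $1 \leq j \leq n-1$, Proposition \ref{thm:sticky} asserts
\[
|\Phi_{n-j,n}(w) - e^{i\thetalegit_{n-j+1}}| = [\discoeff]^{2(1-2^{-j})}\delta^{2^{-j}}(1 + O(\cparam^4)).
\]
Because $n < \deadtime$, Definition \ref{def:thetalegit} gives $\thetalegit_{n-j+1} \in \{\theta_{n-j} + \beta, \theta_{n-j} - \beta\}$, so multiplying by $e^{-i\theta_{n-j}}$ lands this target point precisely on a pole of $f'$, and hence
\[
|u_j - e^{\pm i\beta}| = [\discoeff]^{2(1-2^{-j})}\delta^{2^{-j}}(1 + O(\cparam^4)).
\]
Using $\discoeff \asymp \cparam^{1/4}$ and $\delta^{2^{-j}} \leq L^{2^{-N}} = \cparam^2$ (since $L = \cparam^{2^{N+1}}$ and $n < N$), each $|u_j - e^{\pm i\beta}| \lesssim \cparam^{5/2} \ll \beta$, so the regime of Lemma \ref{thm:f-prime-estimate} is valid at every factor and yields
\[
|f'(u_j)| \asymp \frac{\beta^{1/2}}{[\discoeff]^{1-2^{-j}}\delta^{2^{-(j+1)}}}.
\]

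Taking the product over $j = 0, \ldots, n-1$ and collecting exponents through the telescoping identities
\[
\sum_{j=0}^{n-1}(1-2^{-j}) = n - 2 + 2^{1-n}, \qquad \sum_{j=0}^{n-1} 2^{-(j+1)} = 1 - 2^{-n},
\]
together with $\beta \sim 2\cparam^{1/2}$ and $\discoeff \sim 2\cparam^{1/4}$, collapses the $\cparam$-exponents to $\frac{1}{2}(1 - 2^{-n})$ and gives a bound of the form
\[
|\Phi_n'(w)| \asymp \frac{(\mathrm{const})^n \cparam^{(1-2^{-n})/2}}{\delta^{1-2^{-n}}}.
\]
Substituting $\delta \asymp \sqrt{\sigma^2+\varphi^2}$ converts the $\delta$-power to $(\sigma^2+\varphi^2)^{(1-2^{-n})/2}$, while the $n$ bounded multiplicative constants coming from Lemma \ref{thm:f-prime-estimate} assemble into the $A_1^n$ and $A_2^n$ in the statement.

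The main obstacle will be controlling the accumulated $(1 + O(\cparam^4))^n$ error from the $n$ applications of Proposition \ref{thm:sticky}: because $n \leq T/\cparam$, this factor equals $\exp(O(T\cparam^3))$, which stays a uniform $O(1)$ and is absorbed into $A_1, A_2$. The rapid-decay hypothesis $\sigma \leq \cparam^{2^{2^{1/\cparam}}}$ is what makes Proposition \ref{thm:sticky} applicable at every level (so that $\delta^{2^{-j}} \leq \cparam^4$ holds uniformly in $j \leq n$), and is also what ensures the bound $D = \cparam^{9/2}\sigma^{1/2}$ is small enough to replace $\theta_{n-j}$ by $\thetalegit_{n-j}$ without disturbing the leading asymptotics. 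Verifying that neither this replacement nor the geometric accumulation of errors inflates the per-step constants beyond a factor independent of $n$ is the only truly delicate point; once that is in hand, the algebraic collapse of exponents produces the stated estimate.
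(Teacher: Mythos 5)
Your proof is correct and follows essentially the same route as the paper: chain-rule decomposition \eqref{eq:chain-rule-expansion}, Proposition \ref{thm:sticky} to locate each intermediate image near a pole of $f'$, Lemma \ref{thm:f-prime-estimate} for the per-factor bound, and the telescoping exponent sums $\sum_j 2^{-(j+1)} = 1-2^{-n}$ to collapse to the stated form. You also make explicit two points the paper treats implicitly — that $|u_j - e^{\pm i\beta}| \ll \beta$ so the near-pole regime of Lemma \ref{thm:f-prime-estimate} applies at every level, and that the accumulated $(1+O(\cparam^4))^n = e^{O(T\cparam^3)}$ error stays $O(1)$ — but these are absorbed into the per-step constants $A_1, A_2$ in both arguments, so the proofs are substantively identical.
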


\begin{proof}
 For $|\varphi| < L$, without loss of generality take
 $\theta = \theta_n + \beta + \varphi$.
 Since $\Phi_n = f_1 \circ \dotsb \circ f_n$, by the chain rule,
 \[
  |\Phi_n'(e^{\sigma + i\theta})|
  =
  \prod_{j=0}^{n-1}
   \left|f'\left(
    e^{-i\theta_{n-j}}
    \Phi_{n-j,n}(e^{\sigma + i\theta})
    \right)
   \right|,
 \]
 where $\Phi_{k,n} = \Phi_k^{-1} \circ \Phi_n
 = f_{k+1} \circ f_{k+2} \circ \dotsb \circ f_n$.
 
 By Proposition \ref{thm:sticky},
 if $\delta
 :=
 |e^{\sigma+i\theta} - e^{i(\theta_n + \beta)}|
 <
 2L$,
 then for all $1 \leq j \leq n-1$,
 $|\Phi_{n-j,n}(e^{\sigma + i\theta}) - e^{i\thetalegit_{n-j+1}}|
 =
 [ \discoeff ]^{2(1 - 2^{-j})}
 \delta^{2^{-j}}
 (1 + O(\cparam^4))$,
 and so 
 by Lemma \ref{thm:f-prime-estimate}
 (the above estimate shows that
 $e^{-i\theta_{n-j}}\Phi_{n-j,n}(e^{\sigma + i\theta})$
 is close enough to one of $e^{\pm i \beta}$
 to apply this lemma),
 \begin{align*}
  \left|
   f'\left(
    e^{-\theta_{n-j}}\Phi_{n-j,n}(e^{\sigma + i\theta})
   \right)
  \right|
  &\asymp
  \beta^{1/2}
  |\Phi_{n-j,n}(e^{\sigma + i \theta}) - e^{i\thetalegit_{n-j+1}}|^{-1/2}\\
  &=
  \beta^{1/2}
  [\discoeff]^{-(1-2^{-j})}
  \delta^{-2^{-j-1}}
  (1 + O(\cparam^4))\\
  &\asymp
  \cparam^{2^{-(j+2)}}
  \delta^{-2^{-(j+1)}}.
 \end{align*}
 
 For $j = 0$, as $\Phi_{n,n}$ is the identity map,
 \begin{align*}
  | f'(e^{-i\theta_{n}}\Phi_{n,n}(e^{\sigma+i\theta}))|
  =
  | f'( e^{\sigma+i(\theta - \theta_n)} ) |
  &\asymp
  A_1 \beta^{1/2} \delta^{-1/2}\\
  &\asymp
  A \cparam^{1/4} \delta^{-1/2}.
 \end{align*}
 
 Now if we combine the bounds for each term
 in the above product for $|\Phi_{n}'(e^{\sigma + i\theta})|$,
 we have
 \begin{align*}
  |\Phi_{n}'(e^{\sigma+i\theta})|
  &\geq
  \prod_{j=0}^{n-1}
  \left(
   A_1 \cparam^{2^{-(j+2)}} \delta^{-2^{-(j+1)}}
  \right)\\
  &=
  A_1^n
  \cparam^{\frac{1}{2}(1 - 2^{-n})}
  \delta^{-(1 - 2^{-n})}.
 \end{align*}
 and a similar upper bound.
 Finally, $\delta$ is given by
 \begin{align*}
  \delta
  &=
  | e^{\sigma + i\theta} - e^{i(\theta_n + \beta)} |\\
  &=
  | e^{\sigma + i\varphi} - 1 |\\
  &\asymp
  (\sigma^2 + \varphi^2)^{1/2},
 \end{align*}
 and so, modifying the constants as necessary,
 we have our result.
\end{proof}

We can now obtain our lower bound on the normalising factor.
\begin{proposition}
\label{thm:pf-bound}
 If $\nu > 2$, then
 there exists a constant $A$
 depending only on $\nu$
 such that for any fixed $T > 0$,
 for sufficiently small $\cparam$
 and for $n < \rdown{T/\cparam} \wedge \deadtime$,
 \begin{equation}
 \label{eq:pf-bound}
  Z_n
  \geq
  A^n
  \cparam^{\frac{\etasize}{2}(1 - 2^{-n})}
  \sigma^{-[\etasize(1 - 2^{-n}) - 1]}
 \end{equation} 
 provided that $\sigma = \sigma(\cparam) \leq
 L$.
\end{proposition}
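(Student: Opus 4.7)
The strategy is to obtain the lower bound by restricting the integral defining $Z_n$ to a small arc around a single pole and then applying the derivative estimate of Lemma \ref{thm:deriv-estimate}. Since $Z_n$ is a sum of non-negative contributions, we have
\[
 Z_n \geq \int_{\theta_n+\beta-L}^{\theta_n+\beta+L} |\Phi_n'(e^{\sigma+i\theta})|^{\etasize}\,\d\theta.
\]
On this arc, substitute $\theta = \theta_n + \beta + \varphi$ with $|\varphi| < L$ and apply Lemma \ref{thm:deriv-estimate} to bound the integrand below by
\[
 A_1^{n\etasize}\,
 \frac{\cparam^{\frac{\etasize}{2}(1 - 2^{-n})}}{(\sigma^2+\varphi^2)^{\frac{\etasize}{2}(1-2^{-n})}}.
\]

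Next I would rescale $\varphi = \sigma u$ to reduce the remaining integral to a standard form. Writing $\alpha_n := \frac{\etasize}{2}(1-2^{-n})$, the change of variables gives
\[
 \int_{-L}^{L}\frac{\d\varphi}{(\sigma^2+\varphi^2)^{\alpha_n}}
 =
 \sigma^{1-2\alpha_n}\int_{-L/\sigma}^{L/\sigma}\frac{\d u}{(1+u^2)^{\alpha_n}}.
\]
Because $\etasize>2$ and $n\geq 1$, we have $2\alpha_n = \etasize(1-2^{-n}) > 1$, so the integrand $1/(1+u^2)^{\alpha_n}$ is integrable on $\R$. Moreover, since $\sigma \leq \cparam^{2^{2^{1/\cparam}}}$ and $L = \cparam^{2^{N+1}}$ with $N = \rdown{T/\cparam}$, a direct comparison of exponents shows that $L/\sigma \to \infty$ as $\cparam \to 0$, so the bounded integral converges as $\cparam\to 0$ to the positive finite constant $\int_\R (1+u^2)^{-\alpha_n}\,\d u$. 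Being monotone in $\alpha_n\in(1/2,\etasize/2]$, this constant is uniformly bounded below by $\int_\R(1+u^2)^{-\etasize/2}\,\d u > 0$, a quantity depending only on $\etasize$.

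Putting these bounds together yields
\[
 Z_n \geq A_1^{n\etasize}\cdot c(\etasize)\cdot \cparam^{\frac{\etasize}{2}(1-2^{-n})}\cdot \sigma^{-[\etasize(1-2^{-n})-1]},
\]
which is exactly \eqref{eq:pf-bound} after absorbing $A_1^\etasize$ into a single constant $A$. The only delicate point is verifying that $L/\sigma\to\infty$ fast enough that the truncated tail integral does not spoil the uniform lower bound on the $u$-integral; this reduces to the elementary exponent comparison $2^{T/\cparam}\ll 2^{2^{1/\cparam}}$, which holds for all sufficiently small $\cparam$. Everything else is a routine substitution together with the pointwise bound furnished by Lemma \ref{thm:deriv-estimate}.
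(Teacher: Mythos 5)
Your proposal is correct and follows essentially the same route as the paper: restrict the integral to the arc $|\theta-(\theta_n+\beta)|<L$, apply the lower bound of Lemma \ref{thm:deriv-estimate}, substitute $\varphi=\sigma u$, and bound the resulting $u$-integral below by a constant depending only on $\nu$. The only cosmetic difference is that you invoke $\nu>2$ to assert convergence of $\int_\R(1+u^2)^{-\alpha_n}\,\d u$, whereas the paper simply observes that $\int_{-L/\sigma}^{L/\sigma}(1+u^2)^{-\alpha_n}\,\d u$ is increasing as $\sigma/L\to 0$ (and so is bounded below once $L/\sigma\geq 1$), a slightly weaker and more robust observation that, as the paper notes, makes this particular bound valid for all $\eta<0$.
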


\begin{proof}

 The normalising factor $Z_n$ is given by the integral
 $\int_{\T} |\Phi_n'(e^{\sigma + i\theta})|^{\etasize} \,\d\theta$,
 and Lemma \ref{thm:deriv-estimate} gives us a lower bound
 on the integrand for $\theta$ close to $\theta_n + \beta$:
 \[
  |\Phi_{n}'( e^{\sigma + i (\theta_n + \beta + \varphi)} )|^{\etasize}
  \geq
  A^n \cparam^{\frac{\etasize}{2}(1 - 2^{-n})}
  (\sigma^2 + \varphi^2)^{-\frac{\etasize}{2}(1 - 2^{-n})}
 \]
 when $|\varphi| < L$.
 
 We will now integrate our lower bound over the interval
 $(\theta_n + \beta - L, \theta_n + \beta + L)$.
 First, note that
 \begin{align*}
  \int_{-L}^{L}
   (\sigma^2 + \varphi^2)^{-\frac{\etasize}{2}(1 - 2^{-n})} \,
  \d\varphi
  &=
  \int_{-L/\sigma}^{L/\sigma}
   ( \sigma^2 + \sigma^2 x^2 )^{-\frac{\etasize}{2}(1 - 2^{-n})} \, \sigma
  \d x\\
  &=
  \sigma^{1 - \etasize (1 - 2^{-n})}
  \int_{-L/\sigma}^{L/\sigma}
   \frac{\d x}{(1 + x^2)^{\frac{\etasize}{2}(1 - 2^{-n})}}\\
  &\geq
  A' \sigma^{1 - \etasize (1 - 2^{-n})}
 \end{align*}
 for a constant $A'$,
 since the integral term on the right hand side is increasing as $\cparam \to 0$
 because $\sigma \ll L$.
 Note that this all remains true for any $\eta < 0$,
 and the fact that $\eta < -2$
 will only be necessary in \autoref{sec:concentration}.
 
 Finally, we can put together our bounds (and modify our constant $A$) to get
 \begin{align*}
  \int_{\theta_n + \beta - L}^{\theta_n + \beta + L}
   | \Phi_n'(e^{\sigma + i\theta}) |^{\etasize}
  \, \d \theta
  &\geq
  A^n \cparam^{\frac{\etasize}{2}(1 - 2^{-n})}
  \int_{-L}^{L}
   (\sigma^2 + \varphi^2)^{-\frac{\etasize}{2}(1 - 2^{-n})}
  \, \d \varphi\\
  &\geq
  A^n
  \cparam^{\frac{\etasize}{2}(1 - 2^{-n})}
  \sigma^{1 - \etasize (1 - 2^{-n})}
 \end{align*}
 as required. 
\end{proof}

\subsection{Concentration about each basepoint}
\label{sec:concentration}
Most of our upper bounds on $|\Phi_n'|$ will be established in \autoref{sec:small-contributions},
but we will find one here as it uses the
estimates from the previous section.
Using the terminology we introduce in
\autoref{sec:small-contributions}
and illustrate in \autoref{fig:categories},
in this section we look at \emph{singular points}
which are within $L$
of one of the ``main'' poles
$e^{i(\theta_n \pm \beta)}$
so the estimate of
Lemma \ref{thm:deriv-estimate}
is valid, but are
not within $D$ of these poles.

\begin{proposition}
\label{thm:concentration}
Let $n < \rdown{T/\cparam} \wedge \deadtime$.
For $\sigma(\cparam) \leq \cparam^{2^{2^{1/\cparam}}}$,
then with $L = \cparam^{2^{N+1}}$
and $D = \cparam^{9/2} \sigma^{1/2} \ll L$,
\[
\frac{1}{Z_n}
\int_{[-L, L] \setminus [-D, D]}
|\Phi_n'(e^{\sigma + i(\theta_n \pm \beta + \varphi)})|^{\nu}
\, \d \varphi
=
o(\cparam^\gamma)
\]
as $\cparam \to 0$,
for any constant $\gamma > 0$.
\end{proposition}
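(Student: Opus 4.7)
My plan is to combine the upper bound on $|\Phi_n'|$ from Lemma \ref{thm:deriv-estimate} with the lower bound on $Z_n$ from Proposition \ref{thm:pf-bound}, and then check that the resulting ratio is $o(\cparam^4)$. By Lemma \ref{thm:deriv-estimate}, for $|\varphi| \leq L$,
\[
|\Phi_n'(e^{\sigma + i(\theta_n \pm \beta + \varphi)})|^{\nu} \leq A_2^{n\nu}\,\cparam^{\frac{\nu}{2}(1-2^{-n})}\,(\sigma^2 + \varphi^2)^{-\frac{\nu}{2}(1-2^{-n})}.
\]
I will substitute $\varphi = \sigma x$, which turns the numerator integral over $D \leq |\varphi| \leq L$ into a constant multiple of
\[
A_2^{n\nu}\,\cparam^{\frac{\nu}{2}(1-2^{-n})}\,\sigma^{1-\nu(1-2^{-n})} \int_{D/\sigma}^{L/\sigma} (1+x^2)^{-\frac{\nu}{2}(1-2^{-n})}\,\d x.
\]
Since $\nu > 2$ and $n \geq 1$, the exponent $\nu(1-2^{-n})$ is bounded below by $\nu/2 > 1$, so the inner integrand decays faster than $x^{-1}$ at infinity and a standard tail estimate bounds the inner integral by a $\nu$-dependent constant times $(D/\sigma)^{1-\nu(1-2^{-n})}$.

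Next, I will divide by the lower bound $Z_n \gtrsim A^n\,\cparam^{\frac{\nu}{2}(1-2^{-n})}\,\sigma^{-(\nu(1-2^{-n})-1)}$ supplied by Proposition \ref{thm:pf-bound}. The $\cparam^{\frac{\nu}{2}(1-2^{-n})}$ and $\sigma^{1-\nu(1-2^{-n})}$ factors cancel exactly, leaving
\[
\frac{1}{Z_n} \int_{D \leq |\varphi| \leq L} \left|\Phi_n'(e^{\sigma + i(\theta_n \pm \beta + \varphi)})\right|^{\nu} \d\varphi \lesssim (A_2^{\nu}/A)^n\,(\sigma/D)^{\nu(1-2^{-n})-1},
\]
where the implicit constant depends only on $\nu$.

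The main (and essentially only) obstacle is to show that this ratio is $o(\cparam^4)$. Plugging in $D = \cparam^{9/2}\sigma^{1/2}$ gives $\sigma/D = \sigma^{1/2}\cparam^{-9/2}$, while the exponent $\nu(1-2^{-n}) - 1$ is bounded below by the $\cparam$-independent constant $\nu/2 - 1 > 0$. The prefactor $(A_2^{\nu}/A)^n$ is at most $(A_2^{\nu}/A)^{T/\cparam} = \exp(O(1/\cparam))$ since $n \leq T/\cparam$. Against this, the hypothesis $\sigma \leq \cparam^{2^{2^{1/\cparam}}}$ forces $\sigma/D$ to be smaller than $\cparam^{\frac{1}{2}\,2^{2^{1/\cparam}} - 9/2}$, so $(\sigma/D)^{\nu/2 - 1}$ decays at least as fast as $\cparam^{c\cdot 2^{2^{1/\cparam}}}$ for some positive constant $c$ depending only on $\nu$. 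This tower-exponential decay trivially swallows both the $\exp(O(1/\cparam))$ prefactor and the $\cparam^{-4}$ factor that must be absorbed to conclude $o(\cparam^4)$. The choice of such a rapidly decaying $\sigma$ is exactly what is needed to control the worst-case $n$-dependence of $(A_2^\nu/A)^n$ uniformly for $n \leq T/\cparam$.
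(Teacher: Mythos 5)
Your proof is correct and follows essentially the same route as the paper's: upper-bound the integrand via Lemma \ref{thm:deriv-estimate}, substitute $\varphi = \sigma x$, bound the tail integral, then divide by the lower bound on $Z_n$ from Proposition \ref{thm:pf-bound} and check that $(\sigma/D)^{\nu(1-2^{-n})-1}$ kills everything. If anything you are slightly more explicit than the paper in tracking the $\nu$-power of the constant and in noting that the tower-exponential decay of $\sigma$ must also absorb the potentially $\exp(O(1/\cparam))$-sized prefactor $(A_2^\nu/A)^n$, which the paper leaves implicit.
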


\begin{proof}
 Using the symmetry of our upper bound in Lemma \ref{thm:deriv-estimate},
 it will be enough to find the upper bound
 $\int_{D}^{L}
  |\Phi_{n}'(e^{\sigma + i(\theta_n+\beta+\varphi)})|^{\nu}
 \,\d\varphi
 \ll
 \cparam^\gamma Z_n$.
 We have, modifying the constant $A_2$ where necessary,
 \begin{align*}
  \int_{D}^{L}
  |\Phi_{n}'(e^{\sigma + i(\theta_n+\beta+\varphi)})|^{\nu}
 \,\d\varphi
 &\leq
 A_2^n
 \cparam^{\frac{\etasize}{2}(1 - 2^{-n})}
 \int_{D}^{L}
  (\sigma^2 + \varphi^2)^{-\frac{\etasize}{2}(1 - 2^{-n})}
 \,\d\varphi\\
 &=
 A_2^n
 \frac{
  \cparam^{\frac{\etasize}{2}(1 - 2^{-n})}
 }{
  \sigma^{\etasize(1 - 2^{-n}) - 1}
 }
 \int_{D/\sigma}^{L/\sigma}
  (1 + x^2)^{-\frac{\etasize}{2}(1 - 2^{-n})}
 \,\d x\\
 &\leq
 A_2^n
 \frac{
  \cparam^{\frac{\etasize}{2}(1 - 2^{-n})}
 }{
  \sigma^{\etasize(1 - 2^{-n}) - 1}
 }
 \int_{D/\sigma}^{L/\sigma}
  x^{-\etasize(1 - 2^{-n})}
 \,\d x\\
 &\leq
 A_2^n
 \frac{
  \cparam^{\frac{\etasize}{2}(1 - 2^{-n})}
 }{
  D^{\etasize(1 - 2^{-n}) - 1}
 },
 \end{align*}
 and so, using our lower bound on $Z_n$,
 \begin{align*}
  \frac{
   \int_{D}^{L}
    |\Phi_{n}'(e^{\sigma + i(\theta_n+\beta+\varphi)})|^{\nu}
   \,\d\varphi
  }{
   Z_n
  }
  &\leq
  (A_2/A)^n
  \left(
   \frac{\sigma}{D}
  \right)^{\etasize(1 - 2^{-n}) - 1}\\
  &=
  (A_2/A)^n
  \left(
   \cparam^{-9/2} \sigma^{1/2}
  \right)^{ \etasize(1 - 2^{-n}) - 1 }
 \end{align*}
 which, since $\etasize(1 - 2^{-n}) - 1 \geq \frac{1}{2}\etasize - 1 > 0$,
 decays faster than any power of $\cparam$
 as $\cparam \to 0$.
\end{proof}

Note that the above proof is the only place in which we use that $\eta < -2$.
If $-2 \leq \eta < 0$,
then $h_2$ achieves its maximum
around the two bases of the first particle,
but does not have strong concentration
around these points.
For $-2 < \eta < 0$
$h_2$ is still supported
on all of $\T$
as $\cparam \to 0$,
so there is no concentration.
If $\eta = -2$,
$h_2$ is supported only
nearby to $\theta_1 \pm \beta$,
but
the event
$D < |\theta_{2} - (\theta_1 \pm \beta)| \ll \beta$
retains a high probability as $\cparam \to 0$.
On this event, $\theta_2$
is not close enough
to $\theta_1 \pm \beta$
for our inductive arguments
in Proposition \ref{thm:sticky}
and Lemma \ref{thm:deriv-estimate} to apply.
We can no longer guarantee
that the poles of the second particle
are stronger than the older pole at the base of the first particle,
and so lose the SSRW-like behaviour of $(\theta_n)_{n \geq 1}$.
It then becomes extremely difficult to say how the process behaves,
but the scaling limit as $\cparam \to 0$ is unlikely to be described
by the Schramm--Loewner evolution.

\subsection{Symmetry of the two most recent basepoints}
\label{sec:symmetry}


There are two parts to the statement in Theorem \ref{thm:step-convergence}
about convergence of $h_{n+1}$ to the discrete measure
$\frac{1}{2}( \delta_{\theta_n - \beta} + \delta_{\theta_n + \beta} )$:
the previous two sections and \autoref{sec:small-contributions}
establish that $h_{n+1}$ is concentrated very tightly around $\theta_n \pm \beta$,
and we will show here that the weight given to each of these two points is
approximately equal.

\begin{remark}
\label{rmk:symmetry-proof-is-not-inductive}
Unlike the results from the previous two sections,
the following proposition is not inductive,
i.e.\ as long as $n < \rdown{T/\cparam} \wedge \deadtime$,
the density $h_{n+1}$ is approximately symmetric,
even if the choices of the previous angles were
not made symmetrically.
Even in the extreme case
where $(\theta_n)_{n \in \N}$ is close to an arithmetic progression:
$\theta_2 \approx \theta_1 + \beta$, $\theta_3 \approx \theta_2 + \beta,
\dotsc, \theta_n \approx \theta_{n-1} + \beta$,
we still have an almost symmetric $h_{n+1}$.
\end{remark}

\begin{proposition}
\label{thm:symmetry}
 Let $n < \rdown{T/\cparam} \wedge \deadtime$. Then
 \[
  \sup_{|\varphi| < D}
  \left|
  \log \left(
   \frac
   {
    \left|
     \Phi_{n}'\left(e^{\sigma + i(\theta_n + \beta + \varphi)}\right)
    \right|
   }
   {
    \left|
     \Phi_{n}'\left(e^{\sigma + i(\theta_n - \beta - \varphi)}\right)
    \right|
   }
  \right) \right|
  \leq
  A \cparam^{11/4}
 \]
 for some constant $A$ depending only on $T$.
\end{proposition}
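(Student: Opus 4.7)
The plan is to apply the chain rule \eqref{eq:chain-rule-expansion} and compare the resulting products factor by factor for $w_\pm := e^{\sigma + i(\theta_n \pm \beta \pm \varphi)}$. Setting $\eta_j^\pm := e^{-i\theta_{n-j}}\Phi_{n-j,n}(w_\pm)$, this gives
\[
\log \frac{|\Phi_n'(w_+)|}{|\Phi_n'(w_-)|} = \sum_{j=0}^{n-1} \log \frac{|f'(\eta_j^+)|}{|f'(\eta_j^-)|}.
\]
The $j=0$ term vanishes exactly: the points $\eta_0^\pm = e^{\sigma \pm i(\beta + \varphi)}$ are complex conjugates, and the factorisation $f = m_\Delta \circ \widetilde{f} \circ m_\H$ from the proof of Lemma \ref{thm:distance-estimate} shows that $f$ commutes with complex conjugation, so $|f'(\eta_0^+)| = |f'(\eta_0^-)|$.

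For $j \geq 1$, both images $\Phi_{n-j,n}(w_\pm)$ approach the same pole $e^{i\thetalegit_{n-j+1}}$ of $f_{n-j}'$. Since $\delta := |e^{\sigma + i\varphi} - 1|$ takes the same value for $w_+$ and $w_-$, Proposition \ref{thm:sticky} yields
\[
\bigl|\Phi_{n-j,n}(w_\pm) - e^{i\thetalegit_{n-j+1}}\bigr| = [\discoeff]^{2(1 - 2^{-j})}\delta^{2^{-j}}(1 + O(\cparam^4)),
\]
so the pole-distances agree up to a multiplicative $(1 + O(\cparam^4))$. To turn this into a bound on the log ratio of $|f'|$ values I would derive a refined asymptotic of the form $|f'(w)| = C(\cparam)|w - e^{i\beta}|^{-1/2}(1 + O(\textrm{small}))$, sharpening Lemma \ref{thm:f-prime-estimate}, by analysing the factorisation $m_\Delta \circ \widetilde{f} \circ m_\H$ term by term as in the proof of Lemma \ref{thm:distance-estimate}.

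Combining these pieces should give a per-level bound of order $O(D/\rho_j)$, where $\rho_j \sim [\discoeff]^{1 - 2^{-j}}\delta^{2^{-j}}$ is the typical pole-distance at level $j$. This is worst at $j = 1$, where $\rho_1 \asymp \cparam^{1/4}\sigma^{1/2}$ and $D = \cparam^{9/2}\sigma^{1/2}$ give $D/\rho_1 \asymp \cparam^{17/4}$; summing over the $n \leq T/\cparam$ levels then produces a total of order $\cparam^{13/4}$, well within the claimed $A\cparam^{11/4}$ for small $\cparam$. The main obstacle is that Proposition \ref{thm:sticky} controls only the moduli $|\Phi_{n-j,n}(w_\pm) - e^{i\thetalegit_{n-j+1}}|$, while turning it into a sharp bound on $|f'(\eta_j^+)|/|f'(\eta_j^-)|$ requires also tracking the \emph{arguments} of these displacements through the composition---it is the relative phase between $\eta_j^+$ and $\eta_j^-$ that determines how the symmetry-breaking discrepancies $|\theta_k - \thetalegit_k| \leq D$ accumulate into each factor.
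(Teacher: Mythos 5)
Your high-level decomposition matches the paper's, and your $j=0$ observation is exactly right. But the obstacle you identify at the end --- that turning Proposition~\ref{thm:sticky}'s modulus estimates into a bound on $|f'(\eta_j^+)|/|f'(\eta_j^-)|$ requires tracking arguments of the displacements --- is a red herring, and the paper's proof shows why. Instead of deriving a refined asymptotic from the $m_\Delta \circ \widetilde{f} \circ m_\H$ factorisation, the paper invokes the exact identity $f'(z) = \frac{f(z)}{z}\,\frac{z-1}{(z-e^{i\beta})^{1/2}(z - e^{-i\beta})^{1/2}}$ from Lemma~4 of \cite{stv-ale}. Writing $z_\pm^j = \Phi_{n-j,n}(z_\pm)$, this expresses the $j$th factor entirely in moduli: a prefactor $|z_\pm^{j+1}|/|z_\pm^j|$ that telescopes over $j$ to $|z_\pm^n|/|z_\pm^1|$ (which Proposition~\ref{thm:sticky} controls directly), times the three distances from $z_\pm^j$ to $e^{i\theta_{n-j}}$, $e^{i\thetafake_{n-j+1}}$ and $e^{i\thetalegit_{n-j+1}}$. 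The last is Proposition~\ref{thm:sticky} with identical leading term for the two signs (since $|\lambda_+| = |\lambda_-|$); the other two follow by the triangle inequality from this tiny distance together with the exactly-known $|e^{i\thetalegit_{n-j+1}} - e^{i\theta_{n-j}}| = |e^{i\beta}-1|$. No phase information is ever needed, because the two $\pm$ error terms never need to cancel: each is independently $O(\cparam^{15/4})$, and one simply sums $O_T(\cparam^{-1})$ absolute values of logs to get $O_T(\cparam^{11/4})$.

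A smaller point: your bottleneck analysis is off. The dominant per-level contribution is $O\bigl(\cparam^{-2^{-(j+1)}}|\lambda_\pm|^{2^{-j}}\bigr)$, which \emph{grows} with $j$ and is maximised near $j = n-1$, not $j=1$. The quantity $D/\rho_j$ you compute reflects the $|\theta_k - \thetalegit_k|$ displacements, but that source of error is already absorbed into the $O(\cparam^4)$ factor in Proposition~\ref{thm:sticky}. This misidentification happens not to change the final power of $\cparam$, but it would if the exponents were tighter.
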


\begin{proof}
Let $z_{\pm}
=
\exp\left( \sigma + i\left[\theta_n \pm (\beta + \varphi)\right] \right)$
for $|\varphi| < D$,
and write $\lambda_{\pm} = z_{\pm} - e^{i(\theta_n \pm \beta)}$.
We can then write
\begin{equation}
\label{eq:asymmetries}
\log 
  \left(
    \frac{
      | \Phi_n'(z_+) |
    }{
      | \Phi_n'(z_-) |
    }
  \right)
=
\sum_{j=0}^{n-1}
\log
  \left(
    \frac{
      | f_{n-j}'(\Phi_{n-j,n}(z_+)) |
    }{
      | f_{n-j}'(\Phi_{n-j,n}(z_-)) |
    }
  \right)
\end{equation}

and so we can estimate each term in \eqref{eq:asymmetries} separately.

The $j=0$ term is exactly 0, by the symmetry of $|f_n'|$ about $\theta_n$.

For $1 \leq j \leq n-1$,
we will use 
Lemma 4 of \cite{stv-ale}, 
which states that
$f'(z) = \frac{f(z)}{z} \frac{z-1}{(z-e^{i\beta})^{1/2}(z - e^{-i\beta})^{1/2}}$,
to compare the two derivatives in the $j$th term of \eqref{eq:asymmetries}.
Write $z_\pm^j = \Phi_{n-j,n}(z_\pm)$,
then the $j$th term in \eqref{eq:asymmetries} is
\begin{equation}
\label{eq:jth-symmetry-term}
|f_{n-j}'(z_\pm^j)|
=
\frac{
  | z_\pm^{j+1} |
}{
  | z_\pm^j |
}
\frac{
  | z_\pm^j - e^{i\theta_{n-j}} |
}{
  | z_\pm^j - e^{i\thetafake_{n-j+1}} |^{1/2}
  | z_\pm^j - e^{i\thetalegit_{n-j+1}} |^{1/2}
}
\end{equation}

There will be some telescoping in the product which allows us to find
\[
 \prod_{j=1}^{n-1} \frac{|z_{\pm}^{j+1}|}{|z_{\pm}^{j}|}
 =
 \frac{|z_{\pm}^n|}{|z_{\pm}^{1}|}.
\]

Then recall that in \autoref{sec:tracking} we derived estimates
for the distance of $z_{\pm}^n$ from $e^{i\thetalegit_{n-j+1}}$
in terms of $| \lambda_\pm |$.
So by Proposition \ref{thm:sticky}, as $e^{i\thetalegit_1} = 1$,
\[
 | z_\pm^{n} - 1 |
 =
 \left[\discoeff\right]^{2(1 - 2^{-n})}
 | \lambda_\pm |^{2^{-n}}
 (1 + O(\cparam^4))
 =
 O(\cparam^{17/4})
\]
since $|\lambda_\pm|^{2^{-n}} \lesssim D^{2^{-n}}
\ll L^{2^{-n}}
\leq \cparam^4$.
Therefore $|z_{\pm}^n| = 1 + O(\cparam^{17/4})$,
and similarly $|z_{\pm}^1| = 1 + O(\cparam^{17/4})$.\\

Having dealt with the first fraction in
all derivatives \eqref{eq:jth-symmetry-term}
at once, we will tackle the remaining terms
individually for each $1 \leq j \leq n-1$.

First note that by definition of $\thetalegit_{n-j+1}$,
$|e^{i\thetalegit_{n-j+1}} - e^{i\theta_{n-j}}| = |e^{i\beta} - 1|$.
Hence, using Proposition \ref{thm:sticky} again,
\begin{align*}
 | z_{\pm}^j - e^{i\theta_{n-j}} |
 &=
 | e^{i\thetalegit_{n-j+1}} - e^{i\theta_{n-j}} |
 \left[
  1 + O\left(
   \frac{
    | z_{\pm}^j - e^{i\thetalegit_{n-j+1}} |
   }{
    | e^{i\thetalegit_{n-j+1}} - e^{i\theta_{n-j}} |
   }
  \right)
 \right]\\
 &=
 |e^{i\beta} - 1|
 \left[
  1 + O\left(
   \cparam^{-2^{-(j+1)}} |\lambda_\pm|^{2^{-j}}
  \right)
 \right]\\
 &=
 |e^{i\beta} - 1|
 \left[
  1 + O\left(
   \cparam^{15/4}
  \right)
 \right]
\end{align*}
since $|\lambda_{\pm}|^{2^{-j}} \ll L^{2^{-(n-1)}} \leq \cparam^{4}$.

Similarly,
\begin{align*}
 | z_{\pm}^j - e^{i\thetafake_{n-j+1}} |
 &=
 |e^{2i\beta} - 1|
 (1 + O(\cparam^{15/4})),
\end{align*}
and finally, directly from Proposition \ref{thm:sticky},
\[
 |z_\pm^j - e^{i\thetalegit_{n-j+1}}|
 =
 \left[ \discoeff \right]^{2(1 - 2^{-j})}
 |\lambda_{\pm}|^{2^{-j}}
 (1 + O(\cparam^4)).
\]
Note that for the three estimates we just found,
the only part which depends on the choice of $\pm$
is the error term (as $|\lambda_+| = |\lambda_-|$).
Hence the part of the ratio of $|f_{n-j}'(z_{+}^j)|$
to $|f_{n-j}'(z_{-}^j)|$ which comes from the second
fraction in \eqref{eq:jth-symmetry-term}
is just $1 + O(\cparam^{15/4})$.

We can therefore find a constant $A$
(which does not depend on $n$ or $\varphi$)
such that for each $1 \leq j \leq n-1$,
$\left|
 \log\left(
  \frac{ |f_{n-j}'(z_+^j)| }{ |f_{n-j}'(z_-^j)| }
 \right)
\right|
\leq
A \cparam^{15/4}$.
As there are $O_T(\cparam^{-1})$ such terms
in the product \eqref{eq:asymmetries},
we have
\[
 \left|
  \log\left(
   \frac{
    | \Phi_n'(z_+) |
   }{
    | \Phi_n'(z_-) |
   }
  \right)
 \right|
 =
 O_T( \cparam^{11/4} )
\]
as claimed.
\end{proof}

Now we can deduce that $h_{n+1}$ gives (asymptotically)
the same measure to the sets $(\theta_n + \beta - D, \theta_n + \beta + D)$
and $(\theta_n - \beta - D, \theta_n - \beta + D)$.

\begin{remark}
Recall that earlier we used the heuristic argument that if $\eta = -\infty$
(so we choose from points
with the highest-order pole), then we attach the $(n+1)$th particle to one
of $\theta_n \pm \beta$, with equal probability.
With finite $\eta < -2$,
the derivative $|\Phi_n'|$ in fact differs slightly
at each of $e^{\sigma + i(\theta_n + \beta)}$
and $e^{\sigma + i(\theta_n - \beta)}$,
and so choosing to attach a particle at $e^{i\theta}$ for $\theta$ maximising
$|\Phi_n'(e^{\sigma + i\theta})|$
leads to a deterministic process
after the second step
rather than our SLE$_4$ limit.

However, when we have a finite $\eta < -2$,
integrating over the range $(-D,D)$ around each
$\theta_n \pm \beta$ means that only the asymptotic behaviour
of $|\Phi_n'|$ needs to be the same
to guarantee symmetry
between the two points $\theta_n \pm \beta$.
\end{remark}

\begin{corollary}
\label{thm:symmetry-integrated}
 For $n < \rdown{T/\cparam} \wedge \deadtime$,
 \begin{equation}
 \label{eq:symmetry-bound}
  \left|
   \int_{-D}^{D}
    h_{n+1}(\theta_n + \beta + \varphi)
   \,\d\varphi
   -
   \int_{-D}^{D}
    h_{n+1}(\theta_n - \beta - \varphi)
   \,\d\varphi
  \right|
  =
  O_T(\cparam^{11/4}).
 \end{equation}
\end{corollary}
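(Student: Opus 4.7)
The plan is to deduce this directly from Proposition \ref{thm:symmetry} by noting that $h_{n+1}(\theta) = |\Phi_n'(e^{\sigma + i\theta})|^\nu / Z_n$. Writing
\[
 I_\pm := \int_{-D}^{D} h_{n+1}(\theta_n \pm \beta \pm \varphi)\,\d\varphi
 = \frac{1}{Z_n} \int_{-D}^{D} \left| \Phi_n'\bigl(e^{\sigma + i(\theta_n \pm \beta \pm \varphi)}\bigr) \right|^\nu \,\d\varphi,
\]
the goal is to show $|I_+ - I_-| = O_T(\cparam^{11/4})$. The natural change of variables $\varphi \mapsto -\varphi$ in $I_-$ is not what we want here: we write $I_-$ in the form above precisely so that, with $z_{\pm} := \exp(\sigma + i(\theta_n \pm (\beta + \varphi)))$ as in the proof of Proposition \ref{thm:symmetry}, the two integrands are evaluated at exactly the symmetric pair of points that proposition compares.

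Next I would use Proposition \ref{thm:symmetry} pointwise in $\varphi$. It gives $\bigl|\log\bigl(|\Phi_n'(z_+)|/|\Phi_n'(z_-)|\bigr)\bigr| \leq A\cparam^{11/4}$ uniformly for $|\varphi|<D$, so raising to the $\nu$th power and exponentiating,
\[
 \left| \Phi_n'(z_+) \right|^\nu = \left| \Phi_n'(z_-) \right|^\nu \bigl(1 + O_T(\cparam^{11/4})\bigr),
\]
since $\nu$ is a fixed constant and $e^{x} = 1 + O(x)$ for small $x$. Integrating this identity over $\varphi \in (-D, D)$ and dividing by $Z_n$ gives $I_+ = I_-\bigl(1 + O_T(\cparam^{11/4})\bigr)$.

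Finally, since $I_-$ is the integral of a probability density over a subset of $\T$, we have $I_- \leq 1$, so the multiplicative error translates immediately into the additive bound $|I_+ - I_-| \leq O_T(\cparam^{11/4}) \cdot I_- = O_T(\cparam^{11/4})$, as required. There is no real obstacle here: the work is all in Proposition \ref{thm:symmetry}, and this corollary is a one-line consequence once one is careful that the sign convention in the definition of $I_-$ matches the pairing of points used in that proposition.
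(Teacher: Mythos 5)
Your proof is correct and follows essentially the same route as the paper: both apply Proposition \ref{thm:symmetry} pointwise to the integrand (exponentiating the log bound to get a multiplicative error of $1 + O_T(\cparam^{11/4})$), and both then use that the remaining normalised integral is at most $1$ because $h_{n+1}$ is a probability density. The only cosmetic difference is that the paper factors out $I_+$ while you factor out $I_-$; either works.
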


\begin{proof}
 From Proposition \ref{thm:symmetry}, we have
 \begin{align*}
  \int_{-D}^{D}
   &h_{n+1}(\theta_n + \beta + \varphi)
  \,\d\varphi
  -
  \int_{-D}^{D}
   h_{n+1}(\theta_n - \beta - \varphi)
  \,\d\varphi\\
  &=
  \frac{1}{Z_n}
  \int_{-D}^{D}
   \left(
    | \Phi_n'(e^{\sigma + i(\theta_n + \beta + \varphi)}) |^{\nu}
    -
    | \Phi_n'(e^{\sigma + i(\theta_n - \beta - \varphi)}) |^{\nu}
   \right)
  \d\varphi\\
  &=
  \frac{1}{Z_n}
  \int_{-D}^{D}
   \left(
    | \Phi_n'(e^{\sigma + i(\theta_n + \beta + \varphi)}) |^{\nu}
    -
    e^{O_T(\cparam^{11/4})}
    | \Phi_n'(e^{\sigma + i(\theta_n + \beta + \varphi)}) |^{\nu}
   \right)
  \d\varphi\\
  &=
  O_T\left(
   \cparam^{11/4} \frac{
   \int_{-D}^{D}
    |\Phi_n'(e^{\sigma + i(\theta_n+\beta+\varphi)})|^{\nu}
   \,\d\varphi}{
   Z_n
   }
  \right)
 \end{align*} 
 which is just $O_T(\cparam^{11/4})$ by definition of $Z_n$.
\end{proof}

\section{Analysis of the density away from the main basepoints}
\label{sec:small-contributions}

In this section, we will classify the points $\theta \in \T$
with $| \theta - (\theta_n \pm \beta) | \geq D$
(i.e.\ the set $F_n$ from Theorem \ref{thm:step-convergence})
into \emph{regular points} $R_n$ where $h_{n+1}(\theta) \ll 1$,
and \emph{singular points} $S_n$ where $h_{n+1}(\theta) \gtrsim 1$.
We make this classification based on how close the image
$\Phi_n(e^{\sigma + i\theta})$ is to the common basepoint of the cluster,
which is the image of all the poles of $\Phi_n'$,
as we can see in \autoref{fig:categories}.

In \autoref{sec:wilderness} we make this classification explicit
and establish a bound on $h_{n+1}$ for the regular points.
In \autoref{sec:old-basepoints} we analyse the singular points
more carefully and establish an upper bound on
$\int_{S_n} h_{n+1}(\theta) \,\d\theta$
using similar techniques
as in \autoref{sec:partition-function}.

\begin{figure}[h]
\def\categoriesscale{0.4}
 \centering
 \begin{tikzpicture}
  \draw node (image) at (0,0) {
    \includegraphics[scale=\categoriesscale,
    angle=90,
    trim=
     0 90 0 0,
    clip
    ]
    {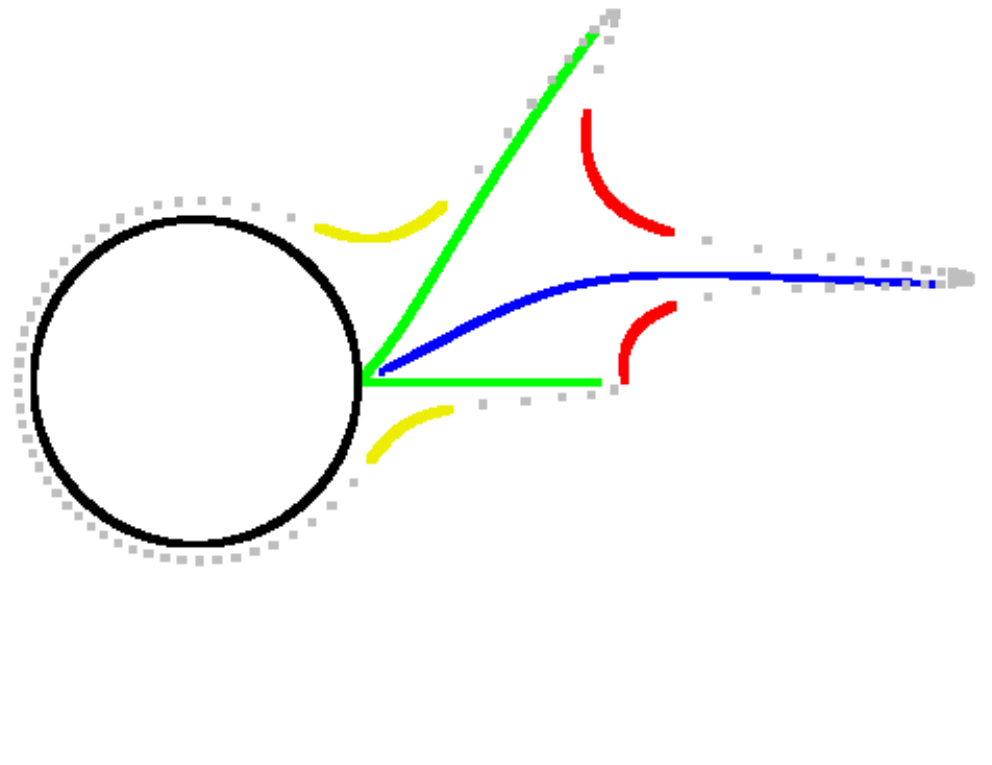}
  };
  \draw node (preimage) at
  (-15*\categoriesscale, 0)
  {
    \includegraphics[scale=\categoriesscale,
    angle=90,
    trim=
     0 90 0 90,
    clip
    ]
    {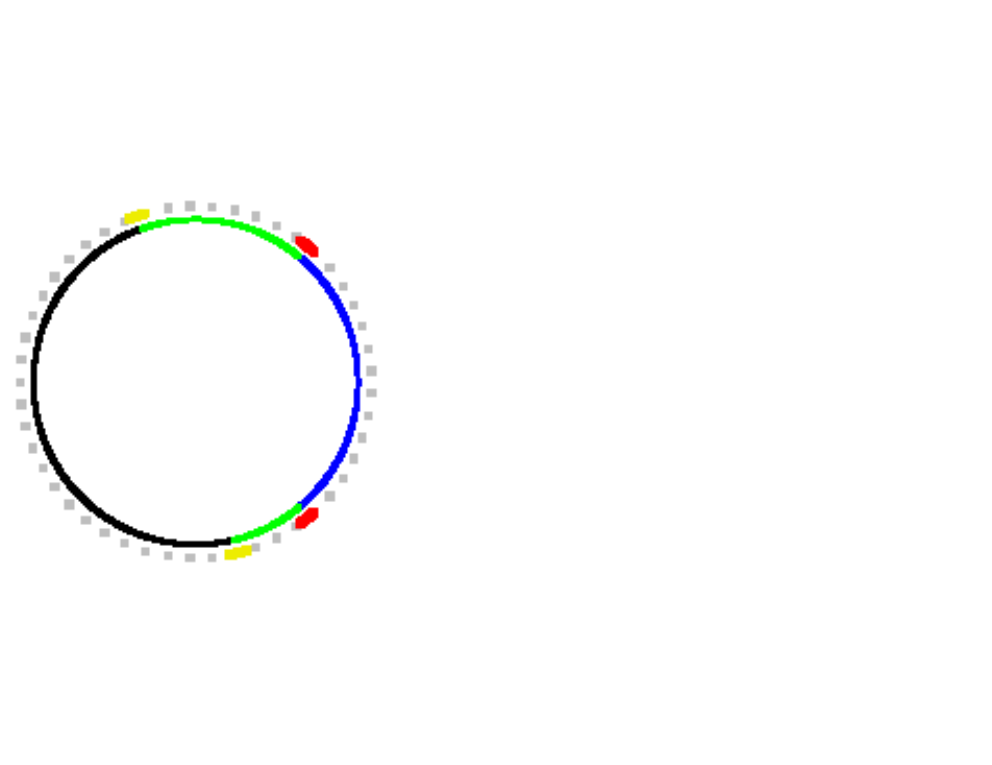}
  }; 
  \draw[->] ($(preimage.east)+(0,-5.5*\categoriesscale)$)
  --
  node [anchor=south] {$\Phi_3$}
  ($(image.west) + (3*\categoriesscale,-5.5*\categoriesscale)$);
 \end{tikzpicture}
 \caption{
\label{fig:categories} 
 We can see on the left the three types of points
 in $e^{\sigma}\T$ for the three-slit cluster:
 we have the \emph{singular points} in red and yellow
 and the \emph{regular points} in grey dots.
 The right hand side of the diagram shows that a point on $e^{\sigma} \T$
 is classified as regular
 if its image under $\Phi_n$ is far from the common basepoint
 (Proposition \ref{thm:distant-points}
 in \autoref{sec:wilderness} shows that this implies $h_{n+1} \ll 1$),
 and the singular points are further classified into the two main (red) arcs
 containing $e^{i(\theta_n \pm \beta)}$,
 and the other (yellow) singular points.
 We have $h_{n+1} \gtrsim 1$ for all singular points,
 but we obtained a lower bound on the integral of $|\Phi_n'|$ over the red regions
 in \autoref{sec:partition-function},
 and we will find an upper bound
 on the integral of this derivative over the yellow regions
 in \autoref{sec:old-basepoints}.
 Note that the choice of $\sigma$ we have used for this diagram
 is around $\cparam^2$ rather than the much smaller $\cparam^{2^{1/\cparam}}$,
 which is necessary to make the envelope $\Phi_3( e^{\sigma}\T )$ clear,
 but does mean that some ``regular'' points are closer to the common basepoints
 than the red ``singular'' points.
 With a sufficiently small $\sigma$ this isn't the case.
 }
\end{figure}

\subsection{Regular points}
\label{sec:wilderness}

In this section,
we will establish a criterion for $\theta \in \T$
to be in our set of \emph{regular} points for which $h_{n+1}(\theta) \ll 1$,
based on the position of $\Phi_n(e^{\sigma + i\theta})$,
as shown in \autoref{fig:categories}.

We will first derive an upper bound on $|\Phi_n'(w)|$
in terms of $|\Phi_n(w) - 1|$,
so we can classify $w \in \Delta$ as a regular point using the distance 
of its image $\Phi_n(w)$ from $1$.

\begin{proposition}
\label{thm:distant-points}
Let $n < N(\cparam) \wedge \deadtime$.
For $\theta \in \R$, let $w = \exp(\sigma + i\theta)$.

For any function $a : \R_+ \to \R_+$
with $D^{2^{-N}}/\beta
\leq a(\cparam) \leq \cparam^{3/2}$
for all $0 < \cparam < 1$,
if
\begin{equation}
 \label{eq:distant-points}
 |\Phi_n(w) - 
 1
 | 
 \geq
 \beta a(\cparam)
\end{equation}
then,
for sufficiently small $\cparam$,
\begin{equation}
\label{eq:wilderness-bound}
 | \Phi_n'(w) |
 \leq 
 A^n \beta^{n/2}
 \left(
  \frac{
   a(\cparam)
  }{
   8
  }
 \right)^{- \frac{1}{2} (2^n - 1) }
\end{equation}
where $A$ is a universal constant independent of $a$.
\end{proposition}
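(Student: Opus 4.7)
The plan is to bound $|\Phi_n'(w)|$ factor-by-factor via the chain rule \eqref{eq:chain-rule-expansion}, using Lemma \ref{thm:f-prime-estimate} on each factor and propagating lower bounds on the distances from $\Phi_{k,n}(w)$ to the poles of $f_k'$ by an induction based on Lemma \ref{thm:distance-estimate}. After rotating so that $\theta_1 = 0$, the hypothesis reads $s_1 \geq \beta a$, where $s_k := |\Phi_{k-1,n}(w) - e^{i\theta_k}|$ measures the distance in the domain of $f_k$ from $\Phi_{k-1,n}(w)$ to the base of the $k$-th particle. Write also $p_k := \min_{\pm} |\Phi_{k,n}(w) - e^{i(\theta_k \pm \beta)}|$ for the distance from $\Phi_{k,n}(w)$ to the nearer pole of $f_k'$. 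Since $f_k$ sends both of these poles to $e^{i\theta_k}$ and $f_k(\Phi_{k,n}(w)) = \Phi_{k-1,n}(w)$, Lemma \ref{thm:distance-estimate} gives the conversion $s_k \asymp \beta^{1/2} p_k^{1/2}$, equivalently $p_k \asymp s_k^2/\beta$, valid whenever $p_k \leq \beta/2$.

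The heart of the argument is an induction on $k$ showing $s_k \gtrsim \beta (a/8)^{2^{k-1}}$, and hence $p_k \gtrsim \beta (a/8)^{2^k}$, for all $1 \leq k \leq n$. Because $n < \deadtime$, we have $|\theta_{k+1} - \thetalegit_{k+1}| \leq D$ with $e^{i\thetalegit_{k+1}}$ one of the two poles of $f_k'$, so by the triangle inequality $s_{k+1} \geq p_k - O(D)$. Combining this with $p_k \asymp s_k^2/\beta$ and the inductive hypothesis yields $s_{k+1} \gtrsim \beta(a/8)^{2^k} - O(D)$, and the lower bound $a \geq D^{2^{-N}}/\beta$ in the hypothesis is precisely calibrated so that the $O(D)$ term is negligible compared with the main term at every step $k \leq n \leq N$; the constant $8$ in $a/8$ serves to absorb the multiplicative constants hidden in the $\asymp$ at each doubling, so that the induction closes cleanly.

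The conclusion is then assembled by collection. Lemma \ref{thm:f-prime-estimate} gives $|f_k'(\Phi_{k,n}(w))| \leq A_2 \beta^{1/2}/p_k^{1/2}$ in the close-to-pole regime; in the far-from-pole regime the $O(1)$ bound $|f_k'| \leq A_3$ is even smaller and only helps. Substituting the lower bound on $p_k$ into the product over $k = 1, \ldots, n$ and collecting the accumulated multiplicative constants into a single prefactor $A^n$ yields the stated bound $|\Phi_n'(w)| \leq A^n \beta^{n/2} (a/8)^{-(2^n-1)/2}$.

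The main obstacle is closing the induction over all $n \leq N$ steps. The doubling exponent drives the main term $\beta(a/8)^{2^k}$ to doubly-exponentially small values, and verifying that the $O(D)$ triangle-inequality error remains dominated at every level of the iteration is exactly what forces the unusual lower bound $a \geq D^{2^{-N}}/\beta$ in the hypothesis. A secondary technical check is that $\Phi_{k,n}(w)$ actually stays within the close-to-pole regime where Lemmas \ref{thm:f-prime-estimate} and \ref{thm:distance-estimate} apply; this follows because the upper bound $a \leq \cparam^{3/2}$ forces $s_k \ll \beta$ throughout the induction, and hence $p_k \asymp s_k^2/\beta \ll \beta/2$.
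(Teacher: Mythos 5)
Your proposal follows essentially the same approach as the paper: expand $|\Phi_n'(w)|$ by the chain rule, propagate lower bounds on the distances to the poles of each $f_k'$ by a doubling induction rooted in Lemma~\ref{thm:distance-estimate}, absorb the $O(D)$ error from $|\theta_{k+1}-\thetalegit_{k+1}|\leq D$ using the lower bound on $a$, and finish with Lemma~\ref{thm:f-prime-estimate}.

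One small imprecision is worth flagging. You justify that the iterates $\Phi_{k,n}(w)$ remain in the close-to-pole regime (so that $p_k\asymp s_k^2/\beta$ applies) by asserting that $a\leq\cparam^{3/2}$ forces $s_k\ll\beta$ throughout the induction --- but your induction only produces \emph{lower} bounds on $s_k$, and there is no a priori upper bound, so this step is unsubstantiated as stated. It is also unnecessary: if $p_k>\tfrac{3}{4}\beta$ at some stage, the target lower bound $p_k\gtrsim\beta(a/8)^{2^k}$ holds trivially, Lemma~\ref{thm:f-prime-estimate} then gives only $|f_k'|\leq A_3$, and the triangle inequality $s_{k+1}\geq p_k-D$ still hands the induction a better-than-needed lower bound. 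The paper sidesteps this entirely by stating the one-sided implication as a contrapositive (claim~\eqref{eq:reverse-distance-estimate}: far image implies far preimage from \emph{both} poles), which is the clean way to phrase it and is what you should use in place of the two-sided $\asymp$.
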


\begin{proof}

We will use the estimate \eqref{eq:distance-estimate}
from Lemma \ref{thm:distance-estimate}.
For convenience, let $z = \Phi_n(w)$,
and we will estimate $|\Phi_n'(w)| = |(\Phi_{n}^{-1})'(z)|^{-1}$
by using \eqref{eq:chain-rule-expansion} and estimating each term separately,
using Lemma \ref{thm:distance-estimate} to obtain estimates on
$\Phi_{n-j,n}(w) = \Phi_{n-j}^{-1}(z)$ by induction on $j$.

First we claim that
for $A(\cparam) \leq \cparam^{1/2}$,
and $\zeta \in \Delta \setminus (1, 1+d(\cparam)]$,
if we have
$|\zeta - 1| \geq \beta A(\cparam)$,
then
\begin{equation}
\label{eq:reverse-distance-estimate}
\min_\pm( | f^{-1}(\zeta) - e^{\pm i\beta} | )
\geq
\frac{1}{4} 
\beta A(\cparam)^2
\end{equation}
for all $\cparam < c_0$, where $c_0 > 0$ is a universal constant which doesn't depend on $A$.

To see this, suppose that
$| f^{-1}(\zeta) - e^{i\beta} | < \frac{1}{4} \beta A(\cparam)^2$.
Then by Lemma \ref{thm:distance-estimate},
setting $\eps = 2^{1/4} - 1 > 0$,
for sufficiently small $\cparam$,
\begin{align*}
| \zeta - 1 | &= | f(f^{-1}(\zeta)) - f(e^{i\beta}) |\\
&=
2(e^{\cparam} - 1)^{1/4} |f^{-1}(\zeta) - e^{i\beta}|^{1/2}
(1 + O\left( A(\cparam)^2 \vee \cparam^{1/2} A(\cparam) \right))\\
&<
2 (\beta/2)^{1/2} (1 + \eps)  \frac{1}{2} \beta^{1/2} A(\cparam) (1 + \eps)\\
&=
\beta A(\cparam),
\end{align*}
so we have shown the contrapositive for our claim.

The derivative $|\Phi_n'(w)|$ is decomposed
in \eqref{eq:chain-rule-expansion} into the product of $n$ terms
$\left| f'( e^{-i\theta_{k}} \Phi_{k,n}(w) ) \right|$,
and so we can find an upper bound on $|\Phi_n'(w)|$
by obtaining lower bounds on
each $| \Phi_{k,n}(w) - e^{i(\theta_{k} \pm \beta)} |
=
| \Phi_{k}^{-1}(z) - e^{i(\theta_{k} \pm \beta)} |$
for $0 \leq k \leq n-1$
and applying Lemma \ref{thm:f-prime-estimate}.

We claim that,
for each $0 \leq k \leq n-1$,
\begin{equation}
\label{eq:uniform-bound-induction}
| \Phi_{k}^{-1}(z) - e^{i\theta_{k+1}} |
\geq
\beta \times
8 \left(
 \frac{
 a(\cparam)
 }{
 8
 }
\right)^{2^k}
\end{equation}
and we will show this using induction.
For $k = 0$,
\eqref{eq:uniform-bound-induction}
is exactly the assumption \eqref{eq:distant-points} of this proposition.
For $k \geq 1$, we assume as the induction step that
\[
| \Phi_{k-1}^{-1}(z) - e^{i\theta_{k}} |
\geq
\beta \times 8 \left(
 \frac{
  a(\cparam)
 }{
  8
 }
\right)^{2^{k-1}}
\]
and aim to obtain \eqref{eq:uniform-bound-induction}
by applying \eqref{eq:reverse-distance-estimate}.

Taking $A(\cparam) = 8 \left(
 \frac{
  a(\cparam)
 }{
  8
 }
\right)^{2^{k-1}}$ in \eqref{eq:reverse-distance-estimate} gives us
\[
| \Phi_{k}^{-1}(z) - e^{i\thetalegit_{k+1}} |
\geq
\beta
\times 16 \left(
 \frac{
  a(\cparam)
 }{
  8
 }
\right)^{2^{k}},
\]
and so since $8 \beta \left(
 \frac{
  a(\cparam)
 }{
  8
 }
\right)^{2^{k}} \geq 2D$
when $k \leq
N \wedge \deadtime$
(for $\cparam$ sufficiently small),
\begin{align*}
| \Phi_k^{-1}(z) - e^{i\theta_{k+1}} |
&\geq
| \Phi_k^{-1}(z) - e^{i\thetalegit_{k+1}} |
- | e^{i\theta_{k+1}} - e^{i\thetalegit_{k+1}} |\\
&\geq
16 \beta \left(
 \frac{
  a(\cparam)
 }{
  8
 }
\right)^{2^{k}}
- 2D\\
&\geq 
8 \beta \left(
 \frac{
  a(\cparam)
 }{
  8
 }
\right)^{2^{k}},
\end{align*}
verifying \eqref{eq:uniform-bound-induction}.

Then \eqref{eq:uniform-bound-induction} tells us,
using \eqref{eq:reverse-distance-estimate},
that for each $0 \leq k \leq n-1$,
\begin{equation}
| \Phi_{k}^{-1}(z) - e^{i(\theta_{k} \pm \beta)} |
\geq
\beta \times 16
\left(
 \frac{
  a(\cparam)
 }{
  8
 }
\right)^{2^{k}},
\end{equation}
and so, by Lemma \ref{thm:f-prime-estimate},
for $\cparam$ sufficiently small,
\begin{align*}
| \Phi_n'(w) |
&=
\prod_{k=0}^{n-1} | f_{k+1}'( \Phi_{k}^{-1}(z) ) |\\
&\leq
A^n \beta^{n/2} \prod_{k=1}^{n-1}
\left(
 \beta^{1/2}
 \left[
  \beta \times 16 \left(
   \frac{
    a(\cparam)
   }{
    8
   }
  \right)^{2^{k}}
 \right]^{-1/2}
\right) \\
&=
(A/4)^n \beta^{n/2}
\left(
 \frac{
  a(\cparam)
 }{
  8
 }
\right)^{
 -\frac{1}{2} ( 2^n - 1 )
}
\end{align*}
for a universal constant $A$.
\end{proof}

In the next section we will use these results with $a(\cparam)$
equal to $\frac{L}{4\beta}$.
We can easily check now that if we use this choice of $a$
in Proposition \ref{thm:distant-points} then,
comparing \eqref{eq:wilderness-bound}
with \eqref{eq:pf-bound},
if $\sigma$ decays as fast as $\cparam^{2^{2^{N}}}$
then $|\Phi_n'(z)|^{\etasize}$
is far smaller than
$\cparam Z_n$,
for $z$ away from the preimages of $e^{i\theta_1}$,
and so if we classify our regular points as those $\theta$ for which
$|\Phi_n(e^{\sigma + i\theta}) - 1| \geq L/4$
then we do have $\sup\limits_{\theta \in R_n} h_{n+1}(\theta) \ll 1$.

\subsection{Old singular points}
\label{sec:old-basepoints}


In \autoref{sec:new-basepoints},
we established a lower bound
on the $n$th normalising factor $Z_n$.
So to show that the probability
is low that
the $(n+1)$th particle is attached
at a point in $E \subseteq \T$,
we need to find an upper bound on
$\int_{E} | \Phi_n'(e^{\sigma + i\theta})|^{\nu}\,\d\theta$.

We did this over certain regions
in \autoref{sec:wilderness}
by finding a bound
$|\Phi_n'(e^{\sigma + i\theta})|^{\nu}
\ll
\cparam Z_n
$.
In this section we will consider \emph{singular} points
where we can have
$|\Phi_n'(e^{\sigma + i\theta})|^{\nu}
\gg
Z_n$.
However, if we look at \autoref{fig:categories}
we can see that not all singular points are close to
the preimages $\theta_n \pm \beta$ of the base of the most recent particle;
there are singular points at the preimages of the base of each particle.
We will therefore need to estimate 
the integrand
$|\Phi_n'|^{\nu}$ more carefully,
and show that when integrated
over the singular points around these old bases
and normalised by $Z_n$,
the resulting probability is small.

The first thing we need to do is
to describe precisely which points
we are integrating over.
We have previously classified our points into regular points $R_n$
and singular points $S_n$ by looking at the distance $|\Phi_n(w) - 1|$.
Points are singular when $|\Phi_n(w) - 1| < \beta a(\cparam)$
(for an $a(\cparam)$ we will specify later),
and we will find a way of differentiating between the ``new'' singular points
around the preimages of the $n$th particle's base
and the ``older'' singular points around the preimages of the other
particles' bases.
To make this clear, we will first give names to all of these preimages.\\

\begin{figure}[h] 
\def\mapscales{0.7}
 \centering
 \begin{tikzpicture}
  \draw node (p0) at (0*\mapscales,0*\mapscales) {
   \fbox{
    \includegraphics[scale=\mapscales,
    angle=0,
    trim=0 50 0 0,
  clip]{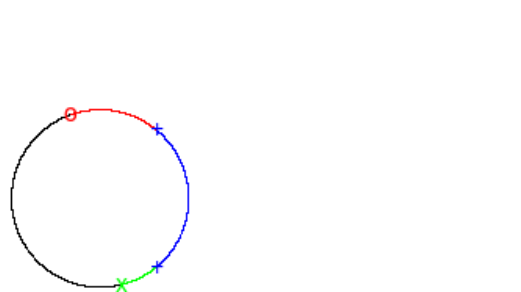}
   }
  };
  \draw node (p1) at (0*\mapscales,-7*\mapscales) {
   \fbox{
    \includegraphics[scale=\mapscales,
    angle=0,
    trim=0 50 0 0,
    clip]{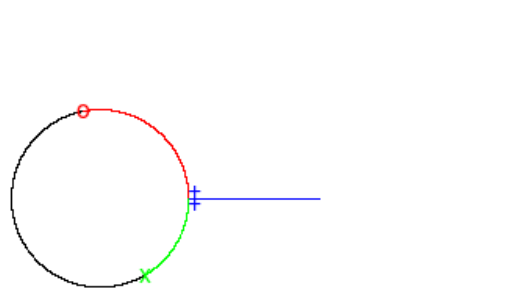}
   }
  };
  \draw node (p2) at (11*\mapscales,-7*\mapscales) {
  \fbox{
  \includegraphics[scale=\mapscales,
  angle=0,
  trim=0 50 0 0,
  clip]{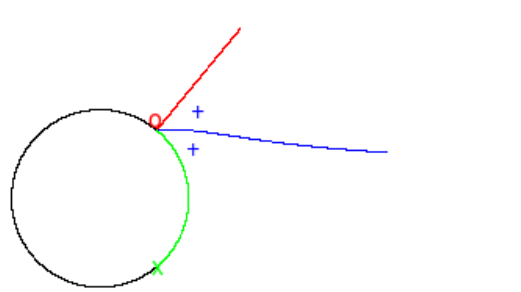}
  }
  };
  \draw node (p3) at (11*\mapscales,0*\mapscales) {
  \fbox{
  \includegraphics[scale=\mapscales,
  angle=0,
  trim=0 50 0 0,
  clip]{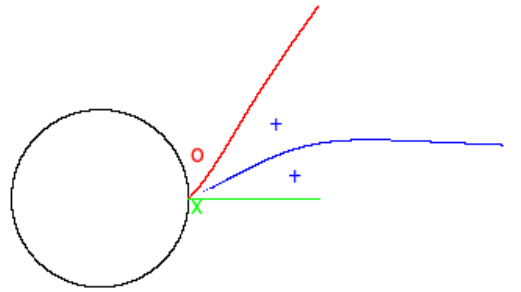}
  }
  };
  \draw[->] 
            (p0)
  -- node [anchor=west] {$f_3$}
            (p1);
  \draw[->] 
            (p1)
  -- node [anchor=south] {$f_2$}
            (p2);
  \draw[->] 
            (p2)
  -- node [anchor=west] {$f_1$}
            (p3);
  \draw[->] 
            (p0)
  -- node [anchor=south] {$\Phi_3$}
            (p3);
  \node[color = blue] (theta-minus) at (-1.3*\mapscales,-2.2*\mapscales) {$\bp_-^3$};
  \node[color=blue] (theta-plus) at (-1.3*\mapscales,0.3*\mapscales) {$\bp_+^3$};
  \node[color=red] (z-two-three) at (-3.4*\mapscales, 0.9*\mapscales) {$\bp_2^3$};
  \node[color=green] (z-one-three) at (-2.5*\mapscales, -1.8*\mapscales) {$\bp_1^3$};
  \node[color=blue] (theta-three) at (-0.6*\mapscales, -7.5*\mapscales) {$e^{i\thetalegit_3}$};
  \node[color=red]  (theta-three-fake) at (-3.0*\mapscales, -6.1*\mapscales) {$e^{i\thetafake_3}$};
  \node[color=green] (z-one-two) at (-2.2*\mapscales, -8.9*\mapscales) {$\bp_1^2$};
  \node[color=red] (theta-two) at (9.2*\mapscales, -6.1*\mapscales) {$e^{i\thetalegit_2}$};
  \node[color=green] (theta-two-fake) at (9.1*\mapscales, -8.7*\mapscales) {$e^{i\thetafake_2}$};
  \node[color=green] (theta-one) at (10.3*\mapscales,-1.3*\mapscales) {$e^{i\theta_1}$};
 \end{tikzpicture}
 \caption{
 \label{fig:composition}
  The construction of a cluster with three particles by composing the three maps
  $f_3$, $f_2$ and $f_1$.
  The top left diagram has labelled the four poles
  $\bp^3_\pm$, $\bp_2^3$ and $\bp_1^3$
  of $\Phi_3'$ with text, and
  the markers $+$, $\times$ and $\circ$
  have been used to track the images of $e^{\sigma} \bp$ for each
  pole $\bp$.
  By following the preimages of each point in the upper-right diagram
  through each map $f_1$, $f_2$ and $f_3$,
  we can see how we defined the ``lesser'' poles $\bp_2^3$ and $\bp_1^3$:
  for example, in the lower-right diagram $e^{i\thetafake_2}$ is a pole of $f_1'$,
  its preimage under $f_2$ is $\bp_1^2$, and
  the preimage of $\bp_1^2$ under $f_3$
  is $\bp_1^3$.
  Note that the three indicated intervals may overlap slightly,
  or have gaps between them,
  but these defects are too small to be seen
  in this diagram,
  and these $\bp$ points are well-defined in both the ``$\eta = -\infty$''
  case where the intervals coincide perfectly,
  and the case of finite $\eta < -2$.
 }
\end{figure}

Firstly, we have the two ``most attractive'' points:
the preimages of the base of the most recent ($n$th) slit.
We will call these two points $\bp^n_\pm = e^{i(\theta_n \pm \beta)}$.
Now the other points correspond to the bases of the $n-1$ other slits
in the cluster,
and we will denote them by $\bp^n_j$ for $1 \leq j \leq n-1$.
The base of the first slit is the image under $f_1$
of the choice of $e^{i(\theta_2 \pm \beta)}$
which is \emph{not} close to $e^{i\theta_2}$.
We defined this
in Definition \ref{def:thetafake}
to be $e^{i\thetafake_2}$,
and so the point sent to the base of the first slit by $\Phi_n$
is the preimage under $f_2 \circ \dotsb \circ f_n = \Phi_{1,n}$
of $e^{i\thetafake_2}$,
so set $\bp^n_1 = \Phi_{1,n}^{-1}(e^{i\thetafake_2})$.

In general, when the $j$th slit is attached to the cluster by
$f_j$, there are two points which are mapped to the base of the slit:
$e^{i\thetalegit_{j+1}}$ (where the later slits are also attached),
and $e^{i\thetafake_{j+1}}$, which has nothing else attached to it.
Therefore, the point sent to the base of the $j$th slit by $\Phi_n$
is the preimage of $e^{i\thetafake_{j+1}}$
under $f_{j+1} \circ \dotsb \circ f_n$.
We can see this illustrated in \autoref{fig:composition}.

\begin{definition}
\label{def:basepoints}
 The base of the $j$th slit for $1 \leq j \leq n-1$ is the image of
 \begin{equation}
 \label{eq:basepoints}
  \bp^n_j := \Phi_{j,n}^{-1}\left( e^{i\thetafake_{j+1}} \right)
 \end{equation}
 under $\Phi_n$.
\end{definition}

Note that for all $n < N \wedge \deadtime$ and
$1 \leq j \leq n-1$,
\begin{equation}
\label{eq:inductive-bp}
 f_n(\bp^n_j) = \bp^{n-1}_j,
\end{equation}
where we adopt the convention that $\bp^{n-1}_{n-1} = e^{i\thetafake_{n}}$.

\begin{remark}
 We will bound $|\Phi_n'(w)|$ above when $w$ is close to $\bp_j^n$,
 so first we will have to show that these points $\bp_j^n$
 for $1 \leq j \leq n-1$ are not close to the points $e^{i(\theta_n \pm \beta)}$
 where we have already shown $|\Phi_n'|$ is large.
\end{remark}

\begin{lemma}
\label{thm:basepoint-separation}
 For $n < N \wedge \deadtime$ and $1 \leq j \leq n-1$,
 \[
  | e^{i(\theta_n \pm \beta)} - \bp_j^n |
  \geq
  \cparam^{2^{n-j}},
 \]
 when $\cparam$ is sufficiently small.
\end{lemma}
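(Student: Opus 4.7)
The plan is to proceed by induction on $n$ (with $j$ fixed), exploiting the recursion $f_n(\bp_j^n) = \bp_j^{n-1}$ from \eqref{eq:inductive-bp} together with the fact that $f_n(e^{i(\theta_n \pm \beta)}) = e^{i\theta_n}$. At each step I would argue by contradiction: if $|\bp_j^n - e^{i(\theta_n + s\beta)}| < \cparam^{2^{n-j}}$ for some sign $s \in \{\pm 1\}$, then pushing both points forward by $f_n$ and using the square-root estimate of Lemma \ref{thm:distance-estimate} forces $\bp_j^{n-1}$ to be too close to $e^{i\theta_n}$, and hence, after accounting for the $O(D)$ discrepancy between $e^{i\theta_n}$ and $e^{i\thetalegit_n}$, too close to a point in $\{e^{i(\theta_{n-1} \pm \beta)}\}$, contradicting the inductive hypothesis.

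For the base case $n = j+1$ we have $\bp_j^{j+1} = f_{j+1}^{-1}(e^{i\thetafake_{j+1}})$, and if this were within $\cparam^2$ of $e^{i(\theta_{j+1} + \beta)}$ (the other sign being symmetric), then Lemma \ref{thm:distance-estimate} would give
\[
|e^{i\thetafake_{j+1}} - e^{i\theta_{j+1}}| = |f_{j+1}(\bp_j^{j+1}) - f_{j+1}(e^{i(\theta_{j+1}+\beta)})| \lesssim \cparam^{1/4} \cdot \cparam = \cparam^{5/4}.
\]
But $|\thetafake_{j+1} - \theta_{j+1}| = 2\beta + O(D) \asymp \cparam^{1/2}$ by the definition of $\thetafake_{j+1}$, which is much larger than $\cparam^{5/4}$ for small $\cparam$, giving the contradiction.

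For the inductive step, assuming the result at level $n-1$, I would suppose for contradiction that $|\bp_j^n - e^{i(\theta_n + s\beta)}| < \cparam^{2^{n-j}}$ for some $s \in \{\pm 1\}$. Since $\cparam^{2^{n-j}} \ll \beta$, Lemma \ref{thm:distance-estimate} applies and yields
\[
  |\bp_j^{n-1} - e^{i\theta_n}|
  = |f_n(\bp_j^n) - f_n(e^{i(\theta_n+s\beta)})|
  \lesssim \cparam^{1/4} \cparam^{2^{n-j-1}} = \cparam^{2^{n-j-1}+1/4}.
\]
Combining with $|e^{i\theta_n} - e^{i\thetalegit_n}| \leq D$ via the triangle inequality puts $\bp_j^{n-1}$ within $\cparam^{2^{n-j-1}}$ of $e^{i\thetalegit_n} \in \{e^{i(\theta_{n-1} \pm \beta)}\}$, contradicting the inductive hypothesis at level $n-1$.

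The main bookkeeping obstacle is ensuring that the triangle-inequality step never swamps the induction, namely that $\cparam^{2^{n-j-1}+1/4} + D \ll \cparam^{2^{n-j-1}}$ uniformly for $j+1 \leq n \leq N = \rdown{T/\cparam}$. The first term is harmless, but the second forces $D \ll \cparam^{2^{N}}$, and this is precisely where the extremely small choice $\sigma \leq \cparam^{2^{2^{1/\cparam}}}$ earns its keep: since $2^{2^{1/\cparam}}$ dwarfs $2^{T/\cparam}$ for small $\cparam$, the bound $D = \cparam^{9/2}\sigma^{1/2}$ is indeed small enough at every level.
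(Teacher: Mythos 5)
Your proposal is correct and takes essentially the same approach as the paper: the paper also argues by contradiction, pushing the hypothetically close pair through $f_n$ via Lemma \ref{thm:distance-estimate}, absorbing the $O(D)$ gap between $e^{i\theta_n}$ and $e^{i\thetalegit_n}$ by the triangle inequality, and iterating until reaching $\bp_j^j = e^{i\thetafake_{j+1}}$, which is at distance $\asymp \cparam^{1/2} \gg \cparam$ from $e^{i\thetalegit_{j+1}}$. Your phrasing as an upward induction on $n$ is logically equivalent to the paper's downward repeated application, and your observation about the role of the tiny $\sigma$ (hence $D$) in keeping the triangle-inequality error subordinate across all $n \leq N$ matches the paper's reliance on $D \ll \cparam^{2^{n-j-1}}$.
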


\begin{proof}
 Assume for contradiction that
 $| e^{i(\theta_n + \beta)} - \bp_j^n | < \cparam^{2^{n-j}}$.
 By Lemma \ref{thm:distance-estimate},
 \begin{align*}
  | e^{i\theta_n} - \bp_j^{n-1} |
  &=
  | f_n(e^{i(\theta_n + \beta)}) - f_n(\bp_j^n) |\\
  &=
  2(e^\cparam - 1)^{1/4} \cparam^{2^{n-j-1}}
  \left( 1 + O\left(\cparam^{1/4}\cparam^{2^{n-j-1}}\right) \right)\\
  &<
  \frac{1}{2}\cparam^{2^{n-j-1}}
 \end{align*}
 for $\cparam$ smaller than some universal $c_0$ (with $(c_0 - 1)^{1/4} < 1/4$,
 and small enough to make the error term irrelevant),
 and so
 \begin{align}
 \label{eq:bp-inductive-distance}
  | e^{i\thetalegit_n} - \bp_j^{n-1} |
  \leq
  | e^{i\thetalegit_n} - e^{i\theta_n} | + | e^{i\theta_n} - \bp_j^{n-1} |
  <
  \cparam^{2^{n-j-1}},
 \end{align}
 since $|e^{i\thetalegit_n} - e^{i\theta_n}| \lesssim D \ll \cparam^{2^{n-j-1}}$.
 Then, as $\thetalegit_n = \theta_{n-1} \pm \beta$ for some choice of $\pm$,
 we can apply this argument repeatedly until we arrive at
 $| e^{i\thetalegit_{j+1}} - \bp_j^j | < \cparam^{2^{j - j}} = \cparam$.
 But as we noted after \eqref{eq:inductive-bp},
 $\bp_j^j = e^{i\thetafake_{j+1}}$,
 and $| e^{i\thetalegit_{j+1}} - e^{i\thetafake_{j+1}} |
 \sim 4\cparam^{1/2} \gg \cparam$,
 and so we have our contradiction.
\end{proof}

\begin{remark}
 In fact the lower bound in
 Lemma \ref{thm:basepoint-separation}
 is fairly generous;
 it would take only a small amount of extra work
 in the proof above to get a tighter bound
 of $\cparam^{2^{n-j-1}}$,
 and we could improve this even further as we used the weak bound
 $(e^\cparam - 1)^{1/4} < \frac{1}{4}$ in the initial calculation.
 However, all we need from Lemma \ref{thm:basepoint-separation}
 is a bound which decays more slowly than $L = \cparam^{2^{N+1}}$,
 and so we have chosen the bound which leads to the simplest possible proof.
\end{remark}

\begin{remark}
 The following corollary
 (which we will not prove)
 is not used in the
 proof of our main results,
 but does answer a question we may worry about:
 if we know that $w$ is within $L$ of some $\bp_j^n$,
 then is that $j$ uniquely determined?
\end{remark}

\begin{corollary} 
\label{thm:separation-corollary}
 For $n < N \wedge \deadtime$, if $1 \leq j < k \leq n-1$, then
 \[
  | \bp_j^n - \bp_k^n | \geq \cparam^{2^{n-j}}
 \]
 for sufficiently small $\cparam$.
\end{corollary}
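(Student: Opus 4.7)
The plan is to mirror the contradiction argument used in the proof of Lemma~\ref{thm:basepoint-separation} via an iterated pullback through the maps $f_{k+1}, \ldots, f_n$. The starting observation is the identity
\[
 \Phi_{k,n}(\bp_j^n) = \bp_j^k \qquad\text{and}\qquad \Phi_{k,n}(\bp_k^n) = e^{i\thetafake_{k+1}},
\]
where the first equation follows from $\Phi_{k,n}\circ\Phi_{j,n}^{-1} = \Phi_{j,k}^{-1}$ and the second is the definition of $\bp_k^n$. Since $e^{i\thetafake_{k+1}}$ is one of $e^{i(\theta_k \pm \beta)}$, Lemma~\ref{thm:basepoint-separation} applied at level $k$ provides the lower bound $|\bp_j^k - e^{i\thetafake_{k+1}}| \geq \cparam^{2^{k-j}}$, which I aim to contradict.

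The argument would assume $|\bp_j^n - \bp_k^n| < \cparam^{2^{n-j}}$ and push this smallness forward through $f_n, f_{n-1}, \ldots, f_{k+1}$. The key input is a H\"older estimate derived from Lemma~\ref{thm:distance-estimate}: applying the identity $\sqrt{a}-\sqrt{b} = (a-b)/(\sqrt{a}+\sqrt{b})$ to the half-plane slit map $\tilde f_\cparam(\zeta) = e^{-\cparam/2}\sqrt{\zeta^2 - (e^\cparam - 1)}$ gives
\[
 |f_m(w_1) - f_m(w_2)| \lesssim (e^\cparam - 1)^{1/4}\,|w_1 - w_2|^{1/2}
\]
whenever $w_1, w_2$ both lie within $\beta/2$ of the same pole $e^{i(\theta_m \pm \beta)}$ of $f_m$. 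The sub-unit prefactor $(e^\cparam-1)^{1/4} \sim \cparam^{1/4}$ is the essential feature: iterating this through $f_n, \ldots, f_{k+1}$ and telescoping the resulting geometric series $\tfrac14 + \tfrac18 + \cdots + 2^{-(n-k+1)}$ would produce
\[
 |\bp_j^k - \bp_k^k| \lesssim \cparam^{2^{k-j} + \tfrac12(1 - 2^{-(n-k)})},
\]
which for $n > k$ and sufficiently small $\cparam$ is strictly smaller than $\cparam^{2^{k-j}}$, contradicting the lower bound above.

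\textbf{Main obstacle.} The hard part will be verifying at each intermediate level $m$ that both $\bp_j^m$ and $\bp_k^m$ remain close to a \emph{common} pole of $f_m$, since only then does the favourable $\cparam^{1/4}$ prefactor apply. A merely universal H\"older bound would accumulate over the $n-k \leq N$ iterations into a constant large enough to swallow the exponent slack and destroy the contradiction. The inductive hypothesis will therefore need to be strengthened to record, alongside the distance $|\bp_j^m - \bp_k^m|$, the identity of the common pole that both basepoints hug---a two-basepoint analogue of the geometric tracking performed in Proposition~\ref{thm:sticky} and in the proof of Lemma~\ref{thm:basepoint-separation}. No essentially new technical tools should be required beyond those already developed in Section~\ref{sec:tracking}.
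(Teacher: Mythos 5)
The paper explicitly declines to prove this corollary (see the remark immediately preceding it), so there is no reference proof to compare against; I am assessing your plan on its own terms.

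Your overall strategy is sound. The identities $\Phi_{k,n}(\bp_j^n) = \bp_j^k$ and $\Phi_{k,n}(\bp_k^n) = e^{i\thetafake_{k+1}}$ are correct (the first does follow from $\Phi_{k,n}\circ\Phi_{j,n}^{-1} = \Phi_{j,k}^{-1}$); Lemma~\ref{thm:basepoint-separation} applied at level $k$ supplies the lower bound $\cparam^{2^{k-j}}$; and the recursion $\delta \mapsto C\cparam^{1/4}\delta^{1/2}$ iterated $n-k$ times starting from $\cparam^{2^{n-j}}$ really does give an exponent $2^{k-j} + \tfrac{1}{2}(1 - 2^{-(n-k)}) > 2^{k-j}$ with accumulated constant bounded by $C^2$, so for small $\cparam$ you obtain the desired contradiction.

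Where I would push back is on the ``main obstacle''. The per-pole bookkeeping you propose to weave into the induction is unnecessary, because the two-point H\"older bound $|f(w_1) - f(w_2)| \lesssim \cparam^{1/4}|w_1 - w_2|^{1/2}$ holds \emph{uniformly} for all boundary pairs with $|w_1 - w_2|$ small (say $\leq \beta/8$), with no reference to a common pole. Concretely: (i) if $w_1$ (hence also $w_2$) lies within $\beta/2$ of some pole, the $\sqrt{a}-\sqrt{b} = (a-b)/(\sqrt{a}+\sqrt{b})$ manipulation gives the $\cparam^{1/4}$ factor because $|\zeta_1+\zeta_2|\approx 2\sqrt{e^\cparam-1}$ is small while the denominator is bounded below by a sector argument (for $\zeta_1,\zeta_2 \in \H$ near the same preimage $\mp\sqrt{e^\cparam-1}$, both $\sqrt{\zeta_i^2 - (e^\cparam-1)}$ lie in a common quadrant, so the sum does not cancel); (ii) if both $w_1, w_2$ are farther than $\beta/4$ from both poles, then $|f'|$ is uniformly bounded by Lemma~\ref{thm:f-prime-estimate}, and the resulting Lipschitz bound is absorbed into $C\cparam^{1/4}|w_1-w_2|^{1/2}$ precisely because $|w_1-w_2| \lesssim \beta \asymp \cparam^{1/2}$; and (iii) the two poles of $f$ are $\sim\beta$ apart, so at these small pair distances $w_1$ and $w_2$ can never straddle distinct poles. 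Since every intermediate distance $|\bp_j^m - \bp_k^m|$ in your iteration is bounded by $\delta_k < \cparam^{2^{k-j}} \leq \cparam^2 \ll \beta$, the uniform estimate applies at every step, and the ``tracking'' you feared you would need simply does not arise. The one genuinely new ingredient your proof requires is a careful statement and proof of this uniform two-point H\"older bound --- a small companion to Lemma~\ref{thm:distance-estimate}, rather than a two-point analogue of the inductive tracking in Proposition~\ref{thm:sticky}.
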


\begin{remark}
 The next result will be useful in telling us for which points $\theta \in \T$
 we can bound $|\Phi_n'(e^{\sigma+i\theta})|$ above
 using Proposition \ref{thm:distant-points},
 and will later help us locate those points for which
 Proposition \ref{thm:distant-points}
 does not provide an upper bound.
\end{remark}

\begin{lemma}
\label{thm:close-definition}
 Suppose that $n < N \wedge \deadtime$, and let $w \in \Delta$.
 For all $\cparam$ sufficiently small,
 if $| \Phi_n(w) - 1 |
 \leq
 \frac{L}{4}$,
 then either
 $\min\limits_{\pm} | w - e^{i(\theta_n \pm \beta)} | \leq L$,
 or
 there exists some $1 \leq j \leq n-1$ such that
 \[
  | \Phi_{j,n}(w) - e^{i\thetafake_{j+1}} |
  \leq
  \frac{\beta}{4}
  \left(
   \frac{
    L
   }{
    \beta
   }
  \right)^{2^{j}}.
 \]
\end{lemma}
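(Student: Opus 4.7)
The plan is to proceed by a forward induction through the composition $\Phi_n = f_1 \circ f_2 \circ \cdots \circ f_n$, exploiting the fact that Lemma \ref{thm:inverse-distance-estimate} \emph{squares} small distances at each application, so that closeness of $\Phi_n(w)$ to the common basepoint $1$ forces $w$ itself to be near one of the points $\bp^n_\pm$ or $\bp^n_j$. Let $w_k := \Phi_{k,n}(w)$, so that $w_n = w$, $w_0 = \Phi_n(w)$ and $w_{k-1} = f_k(w_k)$, and rotate WLOG so that $\theta_1 = 0$ (identifying the common basepoint with $1$). Set $\ell_j := \frac{\beta}{4}(L/\beta)^{2^j}$, so that $\ell_0 = L/4$ and the recursion $\ell_j = 4 \ell_{j-1}^2 / \beta$ holds.

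The inductive claim I would prove is: for each $j = 0, 1, \ldots, n$, either conclusion (b) of the lemma already holds at some earlier index $1 \leq i \leq j$, or $|w_j - e^{i\theta^{\ast}_{j+1}}| \leq \ell_j$ for some $\theta^{\ast}_{j+1} \in \{\theta_j \pm \beta\}$ (with $j = 0$ serving as the hypothesis $|w_0 - 1| \leq \ell_0$). For the step $j-1 \to j$ (with $j \geq 2$), the fact that $|\thetalegit_j - \theta_j| < D$ (from $n < \deadtime$) together with $D \ll \ell_{j-1}$ gives $|w_{j-1} - e^{i\theta_j}| \leq 2 \ell_{j-1} \ll \cparam$, so I would apply Lemma \ref{thm:inverse-distance-estimate} after rotating by $e^{i\theta_j}$ to obtain
\[
 \min_\pm |w_j - e^{i(\theta_j \pm \beta)}|
 =
 \frac{|w_{j-1} - e^{i\theta_j}|^2}{4(e^\cparam - 1)^{1/2}}\,(1 + O(\ell_{j-1}))
 \leq
 \frac{2 \ell_{j-1}^2}{\beta}(1 + o(1))
 \leq
 \ell_j,
\]
using $4(e^\cparam - 1)^{1/2} = 2\beta(1 + O(\cparam))$; the step $0 \to 1$ is identical but uses the tighter bound $|w_0 - 1| \leq \ell_0$ with no $D$-error. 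Since the two preimages $e^{i(\theta_j \pm \beta)}$ are precisely $\{e^{i\thetalegit_{j+1}}, e^{i\thetafake_{j+1}}\}$, either $|w_j - e^{i\thetafake_{j+1}}| \leq \ell_j$ (triggering the lemma's conclusion (b), provided $j \leq n-1$) or $|w_j - e^{i\thetalegit_{j+1}}| \leq \ell_j$ (continuing the induction).

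If the induction runs all the way to $j = n$ without triggering (b), then $\min_\pm |w - e^{i(\theta_n \pm \beta)}| \leq \ell_n$, and since $L \ll \beta$ and $n \geq 1$ we have $\ell_n < L/4 < L$, which is exactly conclusion (a). There is no substantial conceptual obstacle; the only real care is in the constant bookkeeping, namely that the factor $\tfrac{1}{4}$ in $\ell_0 = L/4$ and the recursion $\ell_j = 4\ell_{j-1}^2/\beta$ provide enough slack to absorb both the factor of $2$ in $|w_{j-1} - e^{i\theta_j}| \leq 2\ell_{j-1}$ (coming from $|\thetalegit_j - \theta_j| < D$) and the $(1+o(1))$ error from Lemma \ref{thm:inverse-distance-estimate}, together with the verification that $D = \cparam^{9/2}\sigma^{1/2}$ is indeed negligible against the smallest $\ell_{j-1}$ that appears, which is immediate from the tower-smallness of $\sigma \leq \cparam^{2^{2^{1/\cparam}}}$.
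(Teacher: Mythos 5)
Your proposal is correct and follows essentially the same route as the paper: a forward induction along the composition $\Phi_n = f_1\circ\cdots\circ f_n$ using Lemma~\ref{thm:inverse-distance-estimate} to square the distances at each step, branching at each stage on whether $\Phi_{j,n}(w)$ lands near $e^{i\thetalegit_{j+1}}$ (continue) or $e^{i\thetafake_{j+1}}$ (stop, conclusion holds). The only cosmetic difference is that you phrase it as a direct disjunction while the paper argues by contrapositive, assuming no $j$ triggers the second alternative and running the same induction to reach $\min_\pm|w-e^{i(\theta_n\pm\beta)}|\leq L$; the bookkeeping with the recursion $\ell_j = 4\ell_{j-1}^2/\beta$, the $D$-slack, and the $4(e^\cparam-1)^{1/2}\sim 2\beta$ identification all match.
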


\begin{proof}
 Suppose that there is no such $j$.
 We will show that $\min_\pm |w - e^{i(\theta_n\pm\beta)}| \leq L$.
 We now claim that $|\Phi_{j,n}(w) - e^{i\thetalegit_{j+1}}| \leq \frac{\beta}{4}
 \left( \frac{L}{\beta} \right)^{2^{j}}$
 for all $0 \leq j \leq n-1$
 (where $\Phi_{0,n} = \Phi_n$
 and $\thetalegit_1 = \theta_1 = 0$).
 For $j=0$ the claim is the true by assumption,
 and if the claim is true for $0 \leq j < n-1$,
 then by Lemma \ref{thm:inverse-distance-estimate},
 as $|\Phi_{j,n} - e^{i\theta_{j+1}}|
 \leq \frac{\beta}{4}\left(\frac{L}{\beta}\right)^{2^{j}}
 + |e^{i\thetalegit_{j+1}} - e^{i\theta_{j+1}}|
 \leq \frac{\beta}{2}\left(\frac{L}{\beta}\right)^{2^{j}}$,
 for sufficiently small $\cparam$,
 \begin{align*}
  \min(
  | \Phi_{j+1,n}(w) - e^{i\thetalegit_{j+2}} |,
  | \Phi_{j+1,n}(w) - e^{i\thetafake_{j+2}} |)
  &\leq
  \frac{
   \frac{1}{4}\beta^2 \left( \frac{L}{\beta}\right)^{2^{j+1}}
  }{
   4 (e^{\cparam} - 1)^{1/2}
  }
  \left(1 + \frac{1}{2}\right)\\
  &=
  \frac{3\beta/2}{4(e^{\cparam} - 1)^{1/2}}\times
  \frac{\beta}{4} \left( \frac{L}{\beta} \right)^{2^{j}}\\
  &\leq
  \frac{\beta}{4} \left( \frac{L}{\beta} \right)^{2^{j}}
 \end{align*}
 since $\beta \sim 2(e^{\cparam} - 1)^{1/2}$ for small $\cparam$.
 But we supposed initially that
 $| \Phi_{j+1,n}(w) - e^{i\thetafake_{j+2}} | > \frac{\beta}{4} \left( \frac{L}{\beta} \right)^{2^{j+1}}$,
 and so the above shows that
 $| \Phi_{j+1,n}(w) - e^{i\thetalegit_{j+2}} | \leq \Lj{j+1}$,
 and by induction our claim holds.
 Finally, one more application of Lemma \ref{thm:inverse-distance-estimate}
 after the $j = n-1$ case of our claim,
 $| \Phi_{n-1,n}(w) - e^{i\theta_n}| \leq \frac{\beta}{2}
 \left( \frac{L}{\beta} \right)^{2^{n-1}}$, tells us that
 $\min\limits_{\pm} | w - e^{i(\theta_n \pm \beta)} |
 \leq
 \frac{3\beta/2}{16(e^{\cparam}-1)^{1/2}}\beta \left( \frac{L}{\beta} \right)^{2^n}
 \ll L$,
 as required.
\end{proof}

\begin{remark}
 We intend to use this lemma to find a precise expression for our
 set $S_n$ of singular points
 and then we can make a precise estimate on the size of
 $|\Phi_n'(e^{\sigma + i\theta})|$ for $\theta \in S_n$
 as we did in Lemma \ref{thm:deriv-estimate}.
 For a singular point $w$, Lemma \ref{thm:close-definition}
 tells us that for some $j$, $\Phi_{j,n}(w)$ is close to
 $e^{i\thetafake_{j+1}}$,
 and we now need to turn that into an estimate for the distance between
 $w$ and $\Phi_{j,n}^{-1}( e^{i\thetafake_{j+1}} ) = \bp_j^n$.

\end{remark}

\begin{corollary}
\label{thm:point-locations}
 Suppose that $n < N \wedge \deadtime$,
 and let $w \in \Delta$.
 For all $\cparam$ sufficiently small,
 if $|\Phi_n(w) - 1| \leq \frac{L}{4}$
 then either $\min_{\pm} | w - e^{i(\theta_n \pm \beta)} | \leq L$
 or there exists some $1 \leq j \leq n-1$ such that
 \[
  | w - \bp_j^n |
  \leq
  A^{n-j}
  \Lj{j},
 \]
 where $A$ is some universal constant.
\end{corollary}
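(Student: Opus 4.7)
The plan is to combine the reduction in Lemma \ref{thm:close-definition} with an iterative distortion estimate for the inverse maps, in the spirit of Proposition \ref{thm:sticky}. First I apply Lemma \ref{thm:close-definition}: under the hypothesis $|\Phi_n(w) - 1| \leq L/4$, either $\min_\pm |w - e^{i(\theta_n \pm \beta)}| \leq L$ (the first alternative of the conclusion) or there exists $1 \leq j \leq n-1$ with $|\Phi_{j,n}(w) - e^{i\thetafake_{j+1}}| \leq \Lj{j}$. In the latter case, since $\bp_j^n = \Phi_{j,n}^{-1}(e^{i\thetafake_{j+1}})$ by Definition \ref{def:basepoints}, we need to propagate the bound on $|\Phi_{j,n}(w) - e^{i\thetafake_{j+1}}|$ through $\Phi_{j,n}^{-1}$ into a bound on $|w - \bp_j^n|$.

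I would decompose $\Phi_{j,n}^{-1} = f_n^{-1} \circ f_{n-1}^{-1} \circ \cdots \circ f_{j+1}^{-1}$, apply these inverses one at a time, and track
\[
 \delta_i := |\Phi_{j+i,n}(w) - \bp_j^{j+i}|, \qquad i = 0, 1, \ldots, n-j,
\]
so that $\delta_0 \leq \Lj{j}$ and $\delta_{n-j} = |w - \bp_j^n|$. By the relation $\bp_j^{k} = f_k^{-1}(\bp_j^{k-1})$ noted around \eqref{eq:inductive-bp}, each difference can be written as $\delta_{i+1} = |f_{j+i+1}^{-1}(\Phi_{j+i,n}(w)) - f_{j+i+1}^{-1}(\bp_j^{j+i})|$, so the goal reduces to a per-step bound $\delta_{i+1} \leq A\delta_i$ for a universal $A$, which iterates to $\delta_{n-j} \leq A^{n-j}\Lj{j}$.

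To establish the per-step bound I would work in half-plane coordinates via the decomposition $f = m_\Delta \circ \tilde f \circ m_\H$ from the proof of Lemma \ref{thm:distance-estimate}. The explicit inverse $\tilde f^{-1}(z) = \sqrt{e^\cparam z^2 + (e^\cparam - 1)}$ satisfies $|(\tilde f^{-1})'(z)| = e^\cparam |z| / |\tilde f^{-1}(z)| \leq e^{\cparam/2}$ throughout its target. Each $\bp_j^{k-1}$ lies on $\T$, so in half-plane coordinates on $\R$, at distance at least $\sqrt{1 - e^{-\cparam}} \gtrsim \cparam^{1/2}$ from the tip $i\sqrt{1-e^{-\cparam}}$, and on this scale $(\tilde f^{-1})'$ remains bounded. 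The Möbius factors $m_\Delta$ and $m_\H$ contribute only bounded distortion away from their singular points, and Lemma \ref{thm:basepoint-separation} combined with Lemma \ref{thm:distance-estimate} guarantees that $\bp_j^{k-1}$ is well separated from those singular points. Pulling everything back to $\Delta$ coordinates produces a universal bound $A$ on $|(f_k^{-1})'|$ in a suitable neighbourhood of $\bp_j^{k-1}$.

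The main obstacle is verifying that the per-step constant $A$ really is independent of $\cparam$, $j$, and $n$, and that the line segment from $\Phi_{j+i,n}(w)$ to $\bp_j^{j+i}$ stays inside the region where $|(f_{j+i+1}^{-1})'|$ is controlled. I would close this point inductively: the running bound $\delta_i \leq A^i\Lj{j}$ must remain far smaller than $\cparam^{1/2}$ throughout the iteration, and this holds because $L = \cparam^{2^{N+1}}$ makes $\Lj{j}$ extremely small while $n - j \leq N$, so $A^{n-j}\Lj{j} \ll \cparam^{1/2}$ for $\cparam$ sufficiently small, giving the required estimate.
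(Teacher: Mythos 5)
Your overall strategy is the same as the paper's: reduce via Lemma~\ref{thm:close-definition}, set $\delta_i = |\Phi_{j+i,n}(w) - \bp_j^{j+i}|$, and propagate through the one-step inverses via $\bp_j^{k+1} = f_{k+1}^{-1}(\bp_j^k)$ (from~\eqref{eq:inductive-bp}), aiming at a universal per-step Lipschitz bound $\delta_{i+1} \leq A\,\delta_i$. The paper achieves the per-step bound by integrating $(f_{k+1}^{-1})'$ along a short path $\gamma$ and lower-bounding $|f_{k+1}'|$ on the preimage of that path, while you propose a direct bound on $|(\tilde f^{-1})'|$ in half-plane coordinates; these are equivalent up to the change of coordinates.

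However, the central assertion that $|(\tilde f^{-1})'(z)| = e^{\cparam}|z|/|\tilde f^{-1}(z)| \leq e^{\cparam/2}$ ``throughout its target'' is simply false: the needed inequality $e^{\cparam}|z|^2 \leq |e^{\cparam}z^2 + e^{\cparam} - 1|$ fails when $z^2$ has a negative real part of order $e^{\cparam}-1$, and indeed $(\tilde f^{-1})'$ blows up as $z$ approaches the tip $i\sqrt{1-e^{-\cparam}}$ of the slit. Your next sentence correctly identifies the actual content — $\bp_j^{k}$ lies on $\T$, hence is bounded away from the tip — but this is precisely the step that needs a proof, and it is the crux of the paper's argument: the paper argues by contradiction that any point of $f_{k+1}^{-1}(\gamma)$ that got within $\beta/M$ of $e^{i\theta_{k+1}}$ would force $\bp_j^k$ within $\tfrac12 d$ of the tip, contradicting $\bp_j^k \in \T$. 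Note also that the paper does not use a straight segment but a carefully chosen path $\gamma \subset \Delta$ with arc length $\leq 2|\Phi_{k,n}(w) - \bp_j^k|$: a literal line segment from $e^{\sigma}\T$ to $\T$ can exit $\overline{\Delta}$ or cross one of the attached slits, so you would need to replace it by such a path and then verify the derivative bound holds along the entire path, not just at its endpoints. With these two repairs — replace the ``throughout its target'' claim by the explicit far-from-the-tip argument, and replace the line segment by a controlled path — your proof becomes a faithful rewording of the paper's.
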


\begin{proof}
 To deduce this from Lemma \ref{thm:close-definition},
 we need only show that there is some constant $A$ such that
 $| \Phi_{j,n}(w) - e^{i\thetafake_{j+1}} |
 \leq
 \Lj{j}
 \implies
 | w - \bp_j^n |
 \leq
 A^{n-j} \Lj{j}$.
 Fix some $1 \leq j \leq n-1$.
 We will show that for $j \leq k \leq n-1$,
 $|\Phi_{k+1,n}(w) - \bp_j^{k+1}| \leq A | \Phi_{k,n}(w) - \bp_j^{k}|$.\\
 
 Fix a path $\gamma : (0,1] \to \Delta$
 with $\lim_{\eps \downarrow 0}\gamma(\eps) = \bp_j^k$,
 $\gamma(1) = \Phi_{k,n}(w)$,
 and $|\gamma(t) - \bp_j^k| \leq |\Phi_{k,n}(w) - \bp_j^k|$
 for all $t \in (0,1]$.
 We can also choose $\gamma$ in such a way that it has arc length
 $\ell := \int_\gamma |\d z| \leq 2|\Phi_{k,n}(w) - \bp_{j}^k|$.
 By the fundamental theorem of calculus,
 \begin{align*}
  | \Phi_{k+1,n}(w) - \bp_j^{k+1} |
  &=
  | f_{k+1}^{-1}( \Phi_{k,n}(w) ) - f_{k+1}^{-1}( \bp_j^{k} ) |\\
  &=
  \left|
   \int_\gamma
    (f_{k+1}^{-1})'(\zeta)
   \,\d \zeta
  \right|\\
  &\leq
  \ell \times \sup_{\zeta \in \gamma(0,1]} | (f_{k+1}^{-1})'(\zeta) |\\
  &=
  \frac{\ell}{\inf_{\omega \in f_{k+1}^{-1}(\gamma(0,1])} |f_{k+1}'(\omega)|}.
 \end{align*}
 Now
 there must be some constant $M \geq 1$ such that
 $|\omega - e^{i\theta_{k+1}}| \geq \beta/M$
 for all $\omega \in f_{k+1}^{-1}(\gamma(0,1])$.
 Otherwise, if $|\omega - e^{i\theta_{k+1}}| < \beta/M$,
 then it is easy to check using the explicit form of $f_\cparam$
 from \cite{marshall-rohde}
 that $| f_{k+1}(\omega) - e^{i\theta_{k+1}}(1 + d) | = O(\beta/M^2)$,
 and so
 \[|\bp_j^k - e^{i\theta_{k+1}}(1+d)|
 \leq
 |f_{k+1}(\omega) - e^{i\theta_{k+1}}(1+d)| + |f_{k+1}(\omega) - \bp_j^k|
 \leq
 \frac{1}{2}d\]
 for sufficiently large $M$, 
 contradicting $|\bp_j^k| = 1$.
 Hence by Lemma \ref{thm:f-prime-estimate},
 there is a constant $A$ such that
 \[\inf\limits_{\omega \in f_{k+1}^{-1}(\gamma(0,1])} |f_{k+1}'(\omega)| \geq 2A^{-1}.\]
 
 We therefore obtain
 \begin{equation}
 \label{eq:inductive-old-bp-distance}
  | \Phi_{k+1,n}(w) - \bp_j^{k+1} | \leq A | \Phi_{k,n}(w) - \bp_j^{k} |
 \end{equation}
 for all $j \leq k \leq n-1$,
 and so
 \[
  |w - \bp_j^n|
  =
  |\Phi_{n,n}(w) - \bp_j^n|
  \leq
  A^{n-j} |\Phi_{j,n}(w) - \bp_j^j|
  \leq
  A^{n-j} \Lj{j},
 \]
 as required.
\end{proof}

%
%
If we let $L_j^n$ be the upper bound
in Corollary \ref{thm:point-locations},
then we now have a necessary condition
for points to be singular,
based only on their location:
if $e^{\sigma + i\theta} \in \Delta$
is not within $L_j^n$ of $\bp_j^n$
for some $j$,
then $\theta$ is regular.
The set of singular points $S_n$
is therefore contained in the union
of only $n+1$ intervals centred
around $e^{\theta_n \pm \beta}$
and each $\bp_j^n$.

We can now find a precise estimate for $|\Phi_n'|$
on $S_n$ as we did in Lemma \ref{thm:deriv-estimate}.
The proof will also be similar to that of Lemma \ref{thm:deriv-estimate}.

\begin{lemma}
\label{thm:old-bp-bound}
 Let $n < N \wedge \deadtime$,
 and $1 \leq j \leq n-1$.
 If $\cparam$ is sufficiently small,
 then for all $w \in \Delta$ with $|w| = e^{\sigma}$
 and
 $| w - \bp_j^n | \leq A^{n-j} \Lj{j}$,
 for $A$ as in Corollary \ref{thm:point-locations},
 we have
 \[
  | \Phi_n'( w ) |
  \leq
  B^n
  \cparam^{\frac{n-j}{4} + 1}
  \cparam^{\frac{1}{2}(1 - 2^{-j})}
  \frac{1}{
   \cparam^{2^{n-j}}
  }
  | w - \bp_j^n |^{-(1 - 2^{-j})}
 \]
 where $B$ is a universal constant.
\end{lemma}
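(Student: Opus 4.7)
The plan is to decompose $|\Phi_n'(w)|$ via the chain rule~\eqref{eq:chain-rule-expansion} into $n$ factors $|f_k'(\Phi_{k,n}(w))|$ and estimate them in two phases. In \emph{Phase~1} ($k = j+1, \ldots, n$) the image $\Phi_{k,n}(w)$ lies close to the old basepoint $\bp_j^k$ but bounded away from the poles of $f_k'$; in \emph{Phase~2} ($k = 1, \ldots, j$) it lies close to a pole of $f_k'$, namely $\bp_j^j = e^{i\thetafake_{j+1}}$ when $k = j$ and $e^{i\thetalegit_{k+1}}$ when $k < j$, so that the analysis of Lemma~\ref{thm:deriv-estimate} applies with only superficial changes.

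For Phase~1, the inductive inequality~\eqref{eq:inductive-old-bp-distance} from the proof of Corollary~\ref{thm:point-locations} gives $|\Phi_{k,n}(w) - \bp_j^k| \leq A^{n-k}|w - \bp_j^n|$, and Lemma~\ref{thm:basepoint-separation} together with the hypothesis $|w - \bp_j^n| \leq A^{n-j}\Lj{j}$ then places $\Phi_{k,n}(w)$ at distance at least $\tfrac{1}{2}\cparam^{2^{k-j}}$ from each pole of $f_k'$. Lemma~\ref{thm:f-prime-estimate} (using the near-pole formula when applicable and observing that for $\cparam$ small the universal bound $A_3$ is dominated by $A_2 \beta^{1/2}/|z-\text{pole}|^{1/2}$ outside the near-pole region) yields $|f_k'(\Phi_{k,n}(w))| \lesssim \beta^{1/2}\cparam^{-2^{k-j-1}}$. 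Multiplying over $k = j+1, \ldots, n$ and using $\beta \asymp \cparam^{1/2}$ produces the Phase~1 contribution $C^{n-j}\cparam^{(n-j)/4 + 1 - 2^{n-j}}$, accounting for the $\cparam^{(n-j)/4 + 1}$ and $\cparam^{-2^{n-j}}$ factors in the target.

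For Phase~2, iterating \eqref{eq:inductive-old-bp-distance} in the reverse direction gives $\delta' := |\Phi_{j,n}(w) - \bp_j^j| \asymp |w - \bp_j^n|$ up to a factor $A^{\pm(n-j)}$. Taking $\delta'$ as the starting distance, the inductive construction in Proposition~\ref{thm:sticky} carries through verbatim and gives $|\Phi_{k,n}(w) - e^{i\thetalegit_{k+1}}| \asymp [\discoeff]^{2(1 - 2^{-(j-k)})}(\delta')^{2^{-(j-k)}}$ for each $k < j$. Applying Lemma~\ref{thm:f-prime-estimate} near each of these poles and tallying the exponents as in the proof of Lemma~\ref{thm:deriv-estimate} yields the Phase~2 contribution $\lesssim C^j\cparam^{\frac{1}{2}(1 - 2^{-j})}|w - \bp_j^n|^{-(1 - 2^{-j})}$, with the factor $A^{(n-j)(1-2^{-j})}$ arising from replacing $\delta'$ by $|w - \bp_j^n|$ absorbed into the final $B^n$. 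Multiplying the two phase contributions then yields the claim.

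The main subtlety is the Phase~1 bookkeeping: Lemma~\ref{thm:basepoint-separation} provides only a one-sided bound on $|\bp_j^k - \text{pole}|$, so we cannot determine a priori whether $\bp_j^k$ lies inside or outside the near-pole region of Lemma~\ref{thm:f-prime-estimate}. This is handled by the uniform estimate above that covers both cases with a single expression; with that in hand, the rest is a routine exponent calculation.
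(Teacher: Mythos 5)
Your overall decomposition into two "phases" mirrors the paper's own split between $|\Phi_{j,n}'(w)|$ and $|\Phi_j'(\Phi_{j,n}(w))|$, and your exponent bookkeeping arrives at the correct answer. However, there is a genuine gap in Phase~1.

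You claim that the inductive inequality \eqref{eq:inductive-old-bp-distance} gives $|\Phi_{k,n}(w) - \bp_j^k| \leq A^{n-k}|w-\bp_j^n|$. This is the wrong direction. Iterating $|\Phi_{k+1,n}(w) - \bp_j^{k+1}| \leq A\,|\Phi_{k,n}(w) - \bp_j^k|$ from $k$ up to $n-1$ gives $|w - \bp_j^n| \leq A^{n-k}|\Phi_{k,n}(w) - \bp_j^k|$, i.e.\ the \emph{lower} bound $|\Phi_{k,n}(w) - \bp_j^k| \geq A^{-(n-k)}|w - \bp_j^n|$. It does \emph{not} give an upper bound: passing from $\Phi_{k,n}(w)$ to $\Phi_{k-1,n}(w)$ applies $f_k$, whose derivative near $\bp_j^k$ is only a distance $\asymp \cparam^{2^{k-j}}$ from a pole and so can be as large as $\asymp \cparam^{1/4}\cparam^{-2^{k-j-1}}$ (Lemma~\ref{thm:f-prime-estimate}); no Lipschitz constant bounded uniformly in $\cparam$ is available. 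Yet your Phase~1 argument relies essentially on that upper bound: you need $|\Phi_{k,n}(w) - \bp_j^k|$ to be strictly smaller than the separation $\cparam^{2^{k-j}}$ of Lemma~\ref{thm:basepoint-separation} in order to conclude by the triangle inequality that $\Phi_{k,n}(w)$ is far from the poles of $f_k'$. Without the upper bound, this step is unjustified.

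The paper closes this gap with a separate, more careful induction (the "Claim" inside the proof of Lemma~\ref{thm:old-bp-bound}): it propagates the hypothesis $|w - \bp_j^n| \leq A^{n-j}\Lj{j}$ forward through $f_n, f_{n-1}, \ldots, f_{j+1}$ using the path-integral estimate \eqref{eq:path-integral-upper-bound}, and verifies that although each step multiplies by a potentially large derivative ($A_2 \beta^{1/2}\cparam^{-2^{l-j-1}}$), the product of all these factors is absorbed by the extreme smallness of the initial distance $\Lj{j}$, yielding $|\Phi_{k,n}(w) - \bp_j^k| \leq A^{n-j}\cparam^{3\cdot 2^n} \leq \tfrac{1}{2}\cparam^{2^{k-j}}$ for every $k$. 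This is the ingredient your proposal is missing. A second, minor remark: your Phase~2 assertion $\delta' \asymp |w - \bp_j^n|$ overstates what \eqref{eq:inductive-old-bp-distance} gives you — only the one-sided bound $\delta' \geq A^{-(n-j)}|w - \bp_j^n|$ is available from it — but since the exponent on $\delta'$ is negative, the one-sided bound suffices there, so that part of your argument is fine once you correct the wording.
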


\begin{proof}
 We will complete the proof by finding bounds on
 $|\Phi_{j,n}(w) - e^{i\thetafake_{j+1}}|$;
 an upper bound to show $|\Phi_{j,n}'(w)|$ is small,
 and a lower bound to show $| \Phi_j'(\Phi_{j,n}(w))|$ is small.
 The rest of the proof will be similar to the way we deduced
 Lemma \ref{thm:deriv-estimate}
 from
 Proposition \ref{thm:sticky}.
 
 First, we will estimate the positions of
 $\Phi_{n-1,n}(w), \Phi_{n-2,n}(w), \dotsc, \Phi_{j,n}(w)$.
 As in the proof of Corollary \ref{thm:point-locations},
 for $j+1 \leq k \leq n$,
 \begin{align}
  \nonumber
  | \Phi_{k-1,n}(w) - \bp_j^{k-1} |
  &=
  | f_k( \Phi_{k,n}(w) ) - f_k( \bp_j^k ) |\\
  \label{eq:path-integral-upper-bound}
  &\leq
  2 | \Phi_{k,n}(w) - \bp_j^k |
  \times
  \sup_{|\zeta - \bp_j^k| \leq |\Phi_{k,n}(w) - \bp_j^k|}| f_k'(\zeta) |,
 \end{align}
 so we need only bound $| f_k'(\zeta) |$
 for $\zeta$ close to $\bp_j^k$.
 We will also need inductively that $| \Phi_{k,n}(w) - \bp_j^k |$
 is small in order to say that $\zeta$ \emph{is} close to $\bp_j^k$.
 \begin{claim}
  For $j+1 \leq k \leq n$,
  $| \Phi_{k,n}(w) - \bp_j^k |
  \leq
  A^{n-j}
  \cparam^{3 \times 2^n}
  $
  for sufficiently small $\cparam$.
 \end{claim}
 The claim is true for $k = n$,
 as $|w - \bp_j^n|
 \leq
 A^{n-j} \cparam^{1/2} \left(\frac{1}{2}\cparam^{2^{n+1} - 1/2}\right)^{2^j}
 \leq
 A^{n-j} \cparam^{2^{n+j+1} - 2^{j-1}}
 \leq
 A^{n-j} \cparam^{2^{n+2} - 2^{n}}$.
 Then, if the claim holds for all $l \geq k$,
 we have
 \begin{align*}
  | \Phi_{l,n}(w) - \bp_j^l |
  &\leq
  A^{n-j} \cparam^{3 \times 2^n}
  \leq
  \frac{1}{2}
  \cparam^{2^{l-j}}
 \end{align*}
 for all sufficiently small $\cparam$,
 and so, by Lemma \ref{thm:basepoint-separation} and the triangle inequality,
 for all $\zeta$ such that $|\zeta - \bp_j^l| \leq |\Phi_{l,n}(w) - \bp_{j}^l|$,
 we have
 $\min_{\pm}|\zeta - e^{i( \theta_l \pm \beta )}| \geq \frac{1}{2}\cparam^{2^{l-j}}$.
 Hence by
 Lemma \ref{thm:f-prime-estimate},
 \[
  | f_k'(\zeta) |
  \leq
  A_2 \frac{\cparam^{1/2}}{\cparam^{2^{l-j-1}}}
 \]
 Therefore, by \eqref{eq:path-integral-upper-bound},
 \begin{align*}
  | \Phi_{k-1,n}(w) - \bp_j^{k-1} |
  &\leq
  2^{n-k+1} | \Phi_{n,n}(w) - \bp_j^n |
  \times
  \prod_{l=k}^{n} \left(
   A_2 \cparam^{\frac{1}{2} - 2^{l-j-1}}
  \right)\\
  &\leq
  (2A_2)^{n-k+1} A^{n-j}\Lj{j}
  \cparam^{\frac{n-k+1}{4}}
  \cparam^{-\sum_{l=k-j-1}^{n-j-1} 2^{l}}\\
  &\leq
  \left[
   (2A_2)^{n-k+1}
   \cparam^{\frac{n-k+1}{4}}
  \right]
  A^{n-j}
  \left(\cparam^{2^{n+1} - \frac{1}{2}} \right)^{2^j}
  \cparam^{- ( 2^{n-j} - 2^{k-j-1} ) }\\
  &\leq
  A^{n-j}
  \cparam^{2^{n+j+1} - 2^{j-1} - 2^{n-j} + 2^{k-j-1}}\\
  &\leq
  A^{n-j}
  \cparam^{2^{n+2} - 2^{n-1} - 2^{n-1}}\\
  &=
  A^{n-j}
  \cparam^{3 \times 2^n},
 \end{align*}
 and so our claim holds by induction.
 
 We can also see, from the same computation,
 that
 \begin{equation}
 \label{eq:bp-distance-upper-bound}
  |\Phi_{j,n}(w) - e^{i\thetafake_{j+1}}| = |\Phi_{j,n}(w) - \bp_j^j | \leq \cparam^{3 \times 2^{n}}.
 \end{equation}
 Then for each $j+1 \leq k \leq n$,
 as $\cparam^{3 \times 2^n} \leq \frac{1}{2} \cparam^{2^{k-j}}$,
 we have by the triangle inequality and Lemma \ref{thm:basepoint-separation}
 that
 $| \Phi_{k,n}(w) - e^{i(\theta_k \pm \beta)} | \geq \frac{1}{2} \cparam^{2^{k-j}}$,
 and so by Lemma \ref{thm:f-prime-estimate},
 \begin{align}
 \nonumber
  |\Phi_{j,n}'(w)|
  &=
  \prod_{k=j+1}^n
   | f_k'( \Phi_{k,n}(w) ) |
  \\
 \nonumber
  &\leq
  \prod_{k=j+1}^n
   A_2 \frac{\beta^{1/2}}{(\frac{1}{2}\cparam^{2^{k-j}})^{1/2}}
  \\
 \nonumber
  &\leq
  (2A_2)^{n-j}
  \cparam^{
   \frac{n-j}{4} - \sum_{k=0}^{n-j-1} 2^k
  }\\
 \label{eq:away-from-pole-bound}
  &=
  (2A_2)^{n-j}
  \cparam^{
   \frac{n-j}{4}
   - 2^{n-j}
   + 1
  }
 \end{align}
 for sufficiently small $\cparam$.\\
 
 We will next establish an upper bound on $|\Phi_j'( \Phi_{j,n}(w) )|$.
 By the arguments used to prove Corollary \ref{thm:point-locations},
 we have a lower bound on $| \Phi_{j,n}(w) - e^{i\thetafake_{j+1}}|$
 as well as the upper bound we just established:
 \begin{equation}
 \label{eq:bp-distance-lower-bound}
  | \Phi_{j,n}(w) - e^{i\thetafake_{j+1}} | \geq A^{-(n-j)} | w - \bp_j^n |,
 \end{equation}
 where $A$ is a constant.
 The upper bound in \eqref{eq:bp-distance-upper-bound}
 is less than $\cparam^{2^{n + 1}}$,
 and so we can apply (the proof of) Lemma \ref{thm:deriv-estimate}
 to say
 \begin{align}
 \label{eq:near-pole-bound}
  | \Phi_{j}'( \Phi_{j,n}(w) ) |
  \leq
  \frac{(A')^j}{A^{\frac{n-j}{2}}} \frac{
   \cparam^{\frac{1}{2} (1 - 2^{-j})}
   }{
    |w - \bp_j^n|^{1 - 2^{-j}}
   },
 \end{align}
 and so we can combine \eqref{eq:away-from-pole-bound}
 and \eqref{eq:near-pole-bound}
 to obtain
 \begin{align*}
  | \Phi_n'(w) | &= | \Phi_{j,n}'(w) | \times | \Phi_j'( \Phi_{j,n}(w) ) |\\
  &\leq
  \left( \frac{2A_2}{\sqrt{A}} \right)^{n-j}
  ( A' )^{j}
  \cparam^{\frac{n-j}{4} - 2^{n-j} + 1}
  \cparam^{\frac{1}{2}(1 - 2^{-j})}
  | w - \bp_j^n |^{-(1-2^{-j})}
  \\
  &\leq
  (A'')^n
  \cparam^{\frac{n-j}{4} + 1}
  \cparam^{\frac{1}{2}(1 - 2^{-j})}
  \frac{1}{
   \cparam^{2^{n-j}}
  }
  | w - \bp_j^n |^{-(1 - 2^{-j})}
 \end{align*}
 where $A'' = \max( \frac{2A_2}{\sqrt{A}}, A' )$ is a constant.
\end{proof}


\begin{corollary}
\label{thm:old-bp-probability-bound}
 Let $n < N \wedge \deadtime$,
 and $1 \leq j \leq n-1$.
 Then for $L_j^n = A^{n-j} \Lj{j}$,
 we have
 \begin{align}
 \label{eq:old-bp-probability-bound}
  \int_{-L_j^n}^{L_j^n}
   | \Phi_n'( \bp_j^n e^{\sigma + i \varphi} ) |^{\nu}
  \,\d\varphi
  \leq
  B_{\etasize}^{n} \frac{
   \cparam^{\etasize( \frac{n-j}{4} + 1 )}
  }
  {
   \cparam^{\etasize 2^{n-j}}
  }
  \cparam^{\frac{\etasize}{2}( 1 - 2^{-j} )}
  \sigma^{-\left[ \etasize(1 - 2^{-j}) - 1 \right]}
 \end{align}
 where $B_{\etasize}$ is a constant depending only on $\etasize$.
\end{corollary}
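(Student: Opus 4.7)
The plan is to combine the pointwise bound from Lemma \ref{thm:old-bp-bound} with a standard power-law integral computation. First I would raise the estimate of Lemma \ref{thm:old-bp-bound} to the $\etasize$-th power, obtaining, for $w = \bp_j^n e^{\sigma + i\varphi}$ with $|\varphi| \leq L_j^n$,
\[
 | \Phi_n'(w) |^{\etasize}
 \leq
 B^{n\etasize}
 \cparam^{\etasize(\frac{n-j}{4} + 1)}
 \cparam^{\frac{\etasize}{2}(1 - 2^{-j})}
 \cparam^{-\etasize \cdot 2^{n-j}}
 | w - \bp_j^n |^{-\etasize(1 - 2^{-j})}.
\]
Since $|\bp_j^n| = 1$, one has $| w - \bp_j^n | = | e^{\sigma + i\varphi} - 1 | \asymp (\sigma^2 + \varphi^2)^{1/2}$ uniformly in the range of integration, so we can substitute this directly into the integrand.

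Next I would pull the $\cparam$-dependent prefactor out of the integral and reduce the problem to evaluating
\[
 I := \int_{-L_j^n}^{L_j^n}
  (\sigma^2 + \varphi^2)^{-\frac{\etasize}{2}(1 - 2^{-j})}
 \,\d\varphi.
\]
The change of variables $x = \varphi/\sigma$ yields
\[
 I = \sigma^{1 - \etasize(1 - 2^{-j})}
 \int_{-L_j^n/\sigma}^{L_j^n/\sigma}
  (1 + x^2)^{-\frac{\etasize}{2}(1 - 2^{-j})}
 \,\d x,
\]
which is exactly the form of the $\sigma$-factor appearing in \eqref{eq:old-bp-probability-bound}. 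The crucial observation here is that $j \geq 1$ gives $1 - 2^{-j} \geq \tfrac{1}{2}$, and since $\etasize > 2$ we have $\etasize(1 - 2^{-j}) \geq \etasize/2 > 1$. Hence the one-dimensional integral over $\R$ converges, and so
\[
 \int_{-L_j^n/\sigma}^{L_j^n/\sigma}
  (1 + x^2)^{-\frac{\etasize}{2}(1 - 2^{-j})}
 \,\d x
 \leq
 \int_{\R} (1 + x^2)^{-\etasize/4} \,\d x
 =: C_{\etasize},
\]
a constant depending only on $\etasize$.

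Combining these pieces and absorbing $C_{\etasize}$ and $B^{\etasize}$ into a single per-step constant $B_{\etasize}$ gives the desired bound \eqref{eq:old-bp-probability-bound}. The only subtlety that is not purely mechanical is verifying convergence of the $x$-integral, which is where the hypothesis $\etasize > 2$ (equivalently $\eta < -2$) is used in this section; this is the same threshold that appears in Proposition \ref{thm:concentration} and is what distinguishes the phase $\eta < -2$ where singular contributions can be controlled from the intermediate phase $-2 \leq \eta < 0$. Everything else is bookkeeping of powers of $\cparam$, which matches the stated exponents once one identifies $\sigma^{1 - \etasize(1 - 2^{-j})}$ with $\sigma^{-[\etasize(1 - 2^{-j}) - 1]}$.
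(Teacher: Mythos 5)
Your proof is correct and matches the paper's own, which simply states that the bound follows from Lemma \ref{thm:old-bp-bound} together with the identification $|\bp_j^n e^{\sigma+i\varphi}-\bp_j^n| \asymp (\sigma^2+\varphi^2)^{1/2}$, via the same substitution $x=\varphi/\sigma$ you use. One small aside: your observation that $\nu>2$ is needed here to make the $x$-integral converge (since $j=1$ gives the exponent $\nu/4$) is accurate, and in fact slightly sharpens the paper's remark after Proposition \ref{thm:concentration} that $\eta<-2$ is used only there.
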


\begin{proof}
 As $| \bp_{j}^n e^{\sigma + i\varphi} - \bp_j^n |
 \asymp
 ( \sigma^2 + \varphi^2 )^{1/2}$,
 the bound follows immediately from Lemma \ref{thm:old-bp-bound}
 (in the same way as we obtained Proposition \ref{thm:pf-bound}
 from Lemma \ref{thm:deriv-estimate}).
\end{proof}

\section{Proof of main results}
\label{sec:final-proof}

With the results of the previous sections,
we are finally ready to prove our main scaling limit result,
that the cluster $K_N^\cparam$ converges in distribution,
as $\cparam \to 0$,
to an SLE$_4$ cluster.
To help picture the sets $S_{n,j}$ and $R_n$,
it may be useful to refer to \autoref{fig:categories}.

\begin{proof}[Proof of Theorem \ref{thm:step-convergence}]
 We want to show that
 $h_{n+1}(F_n) = \int_{F_n} h_{n+1}(\theta) \,\d\theta$ is small,
 and so we will decompose $F_n$ into several sets.

 Let $R_n = \{ \theta \in \T :
 | \Phi_n(e^{\sigma + i\theta}) - 1 |
 >
 \frac{L}{4} \}$,
 $S_n = F_n \setminus R_n$.
 We will further decompose $S_n$:
 first define $$T_n = \{ \theta \in S_n :
 D < \min_{\pm}| e^{\sigma + i\theta} - e^{i(\theta_n \pm \beta)} | \leq L \},$$
 and for $1 \leq j \leq n-1$
 define $$S_{n,j} = \{ \theta \in S_n :
 | e^{\sigma + i\theta} - \bp_{j}^{n} | \leq L_j^n \},$$
 where $L_j^n$ is the bound appearing in Corollary \ref{thm:point-locations},
 then Corollary \ref{thm:point-locations} tells us that
 $S_n = T_n \cup \left(\bigcup_{j=1}^{n-1} S_{n,j}\right)$.
 We can then split the integral as
 \begin{equation}
 \label{eq:integral-split}
  h_{n+1}(F_n)
  \leq
  h_{n+1}(R_n) + h_{n+1}(T_n) + \sum_{j=1}^{n-1} h_{n+1}(S_{n,j}).
 \end{equation}
 We showed in \autoref{sec:concentration} that $h_{n+1}(T_n) = o(\cparam^\gamma)$
 for any fixed $\gamma > 0$,
 and so we only need to bound $h_{n+1}(R_n)$ and each $h_{n+1}(S_{n,j})$.
 Bounding $h_{n+1}(R_n)$ is simple using Proposition \ref{thm:distant-points},
 as for any $\theta \in R_n$,
 we have
 \[|\Phi_n'(e^{\sigma + i\theta})|
 \leq
 A^n \beta^{n/2}
 \left(
  \frac{
   L
  }{
   32 \beta
  }
 \right)^{-\frac{1}{2}(2^n - 1)} \ll \cparam^4 Z_n,\]
 and so $h_{n+1}(R_n) = o(\cparam^4)$.
 Finally, we will bound $h_{n+1}(S_{n,j})$.
 Using the bounds from Proposition \ref{thm:pf-bound}
 and Corollary \ref{thm:old-bp-probability-bound}, we have
 \begin{align*}
  h_{n+1}(S_{n,j})
  &\asymp
  \frac{1}{Z_n}
  \int_{-L_j^n}^{L_j^n}
   |\Phi_n'( \bp_j^n e^{\sigma + i \varphi} )|^{\etasize}
  \,\d\varphi\\
  &\leq
  \frac{
   B_{\etasize}^{n} \frac{
    \cparam^{\etasize( \frac{n-j}{4} + 1 )}
   }
   {
    \cparam^{\etasize 2^{n-j}}
   }
   \cparam^{\frac{\etasize}{2}( 1 - 2^{-j} )}
   \sigma^{-\left[ \etasize(1 - 2^{-j}) - 1 \right]}
  }{
   A^n
   \cparam^{\frac{\etasize}{2}(1 - 2^{-n})}
   \sigma^{-[\etasize(1 - 2^{-n}) - 1]}
  }\\
  &=
  \left( \frac{B_{\etasize}}{A} \right)^{\mspace{-5mu}n}
  \underbrace{
   \cparam^{\etasize( \frac{n-j}{4} + 1 )}
   \cparam^{-\frac{\etasize}{2}( 2^{-j} - 2^{-n} )}
  }_{
   o(\cparam^5)
  }
  \cparam^{-\etasize 2^{n-j}}
  \sigma^{\etasize( 2^{-j} - 2^{-n} )}\\
  &\ll
  \cparam^5
  \left( \frac{B_{\etasize}}{A} \right)^{\mspace{-5mu}n}
  \cparam^{-\etasize 2^{n-j}}
  \sigma^{\etasize 2^{-n}},
 \end{align*}
 then as $\sigma \leq \cparam^{2^{2^{1/\cparam}}}$,
 we have
 $\cparam^{-\etasize 2^{n-j}} \sigma^{\etasize 2^{-n}}
 \leq
 \cparam^{\etasize \left( 2^{2^{1/\cparam} - n} - 2^{n-j} \right)}
 \leq
 \cparam^{\etasize \left( 2^{2^{1/\cparam} - N} - 2^{N} \right)}
 $
 which decays faster than exponentially in $N$.
 Therefore $h_{n+1}(S_{n,j}) = o_T(\cparam^5)$,
 and so $\sum_{j=1}^{n-1} h_{n+1}(S_{n,j}) = o_T(\cparam^4)$,
 establishing \eqref{eq:concentration-result}.
 The second bound, \eqref{eq:symmetry-result},
 comes immediately from Corollary \ref{thm:symmetry-integrated}.
\end{proof}

\begin{remark}
 We have now seen that $(\theta^\cparam_{n})_{n \leq \rdown{T/\cparam}}$
 is very close to a simple symmetric random walk
 with step length $\beta \sim 2\cparam^{1/2}$,
 and so we expect $(\xi^\cparam_t)_{t \in [0,T]}
 = (\theta_{\rdown{t/\cparam}})_{t \in [0,T]}$
 will converge in distribution to $(2B_t)_{t \in [0,T]}$,
 where $B$ is a standard Brownian motion.
 We now state a result by McLeish \cite{mcleish-near-mg}
 which gives conditions for
 near-martingales to converge to a diffusive limit.
\end{remark}

\begin{corollary}[Corollary 3.8 of \cite{mcleish-near-mg}]
\label{cor:mcleish}
	Let $(X_{n,i})_{n, i \in \N}$ be an array of random variables,
	$J = [0,T]$ for $T > 0$ or $[0,\infty)$,
	and $(k_n)_{n \in \N}$ a sequence of right-continuous functions $J \to \N \cup \{0\}$.
  Write $W_n(t) = \sum_{i=1}^{k_n(t)} X_{n,i}$ for $t \in J$,
  and assume the following three limits hold in probability
  as $n \to \infty$:
	\begin{align}
	\label{mcleish-original-i}
		\sum_{j=1}^{k_n(t)}
			\E\left[
				X_{n,j}^2
				1[ |X_{n,j}| > \eps ]
				|
				X_{n,1}, \dotsc, X_{n,j-1}
			\right]
			&\to 0
			\text{ for all }
			\eps > 0,
		\\
	\label{mcleish-original-ii}
		\sum_{j=1}^{k_n(t)}
			\E\left[
				X_{n,j}^2
				|
				X_{n,1}, \dotsc, X_{n,j-1}
			\right]
			&\to t,
		\\
	\label{mcleish-original-iii}
		\sum_{j=1}^{k_n(t)}
			\left|\E\left[
				X_{n,j}
				|
				X_{n,1}, \dotsc, X_{n,j-1}
			\right]\right|
			&\to 0,
	\end{align}
	for all $t \in J$.
	Then $W_n \to B$ weakly in $D(J)$ as $n \to \infty$,
	where $B$ is a standard Brownian motion.

\end{corollary}

\begin{proof}[Proof of Proposition \ref{thm:driver-limit}]
	The bound $\P[ \deadtime \leq \rdown{T/\cparam} ] = O_T(\cparam^3)$
	is obtained immediately from Theorem \ref{thm:step-convergence},
	by observing for $1 \leq j \leq \rdown{T/\cparam}$ that
	\begin{align*}
		\P[ \deadtime \leq j ] \leq A\cparam^4
		+ \P[\deadtime \leq j-1].
	\end{align*}
	For the convergence of the driving function,
	we will apply Corollary \ref{cor:mcleish},
	replacing $n \to \infty$ by $\cparam \to 0$
	(this can be justified by showing the limit holds
	for any sequence of capacities $\cparam_n$
	tending to zero as $n \to \infty$)
	and $k_n(t)$ by $\rdown{t/\cparam}$.
	Then $X_{\cparam, j} = \theta_j - \theta_{j-1}$.
	Note that we will have $4t$ rather than $t$ as the limit
	in \eqref{mcleish-original-ii}, corresponding to
	a limit of $2B$ instead of $B$.\\
	
	The expectation of the $j$th term in \eqref{mcleish-original-i} is
	\begin{align*}
		\E\int_{-\pi}^{\pi}
			\varphi^2 h_{j}(\theta_{j-1} + \varphi)
			1[|\varphi| > \eps]
		\,\mathrm{d}\varphi
		&\leq
		\pi^2
		\E(\P( |\theta_{j} - \theta_{j-1}| > \eps \,|\,\theta_1, \dotsc, \theta_{j-1} ))\\
		&\leq
		\pi^2
		\P( \deadtime \leq j )
	\end{align*}
	when $\cparam$ is sufficiently small so $\beta + D < \eps$.
	Using our bound on
	$\P[\deadtime \leq \rdown{T/\cparam}]$,
	we see \eqref{mcleish-original-i}
	tends to zero in $L^1$
	and hence also in probability.
	
	Next, since $h_j$ approximates
	$\frac{1}{2}(\delta_{\theta_{j-1} - \beta} + \delta_{\theta_{j-1} + \beta})$,
	the $j$th term in \eqref{mcleish-original-ii} is
	\begin{align*}
		\int_{-\pi}^{\pi}
			\varphi^2 h_j(\theta_{j-1} + \varphi)
		\,\d \varphi
		&=
		\int_{\beta-D}^{\beta+D} \varphi^2 h_{j}(\theta_{j-1} + \varphi) \,\d\varphi
    +
  	\int_{-\beta-D}^{-\beta+D} \varphi^2 h_{j}(\theta_{j-1} + \varphi) \,\d\varphi
 	 	+
  	E_j\\
  	&=
  	(\beta + O(D))^2
  	\int_{\T \setminus F_{j-1}}
  		h_j(\theta)
  	\,\d \theta
  	+ E_j\\
  	&=
  	\beta^2 + O(\beta D) + E_j'
	\end{align*}
	where $E_j'$ is the sum of two terms:
	\begin{align*}
	\int_{F_{j-1}} \theta^2 h_j(\theta)\,\d \theta 
	&\leq \pi^2 1[\deadtime \leq \rdown{t/\cparam}] + \pi^2 A\cparam^4
	\intertext{and}
	(\beta^2 + O(\beta D))
	\int_{F_{j-1}} h_j(\theta) \,\d\theta
	&\leq 2\beta^2 1[\deadtime \leq \rdown{t/\cparam}] + 4 A \cparam^5
	\end{align*}
	(both bounds come from Theorem \ref{thm:step-convergence}).
	Hence \eqref{mcleish-original-ii} is
	\begin{align*}
		\sum_{j=1}^{\rdown{t/\cparam}}
			\int_{-\pi}^{\pi}
				\varphi^2 h_j(\theta_{j-1} + \varphi)
			\,\d \varphi
		&=
		\rdown{t/\cparam} \beta^2
		+ O\left(\frac{\beta D}{\cparam}\right)
		+ \sum_{j=1}^{\rdown{t/\cparam}} E_j'.
	\end{align*}
	Then
	\begin{align*}
		\E\left[
			\sum_{j=1}^{\rdown{t/\cparam}}
				|E_j'|
		\right]
		&\leq
		\rdown{\frac{t}{\cparam}} \left( (\pi^2 + 2\beta^2)\P[ \deadtime \leq \rdown{t/\cparam} ]
		+ \pi^2 A \cparam^4 + 4 A \cparam^5
		\right)\\
		&=
		O_T(\cparam^2),
	\end{align*}
	so \eqref{mcleish-original-ii} converges
	in $L^1$ to
	$\lim_{\cparam \to 0}(\rdown{t/\cparam}\beta^2) = 4t$ for any $t \in [0,T]$
	as $\cparam \to 0$.
	
	Finally, for the symmetry condition we can combine
	\eqref{eq:concentration-result} and \eqref{eq:symmetry-result}
	to bound the $j$th term in \eqref{mcleish-original-iii}:
	\begin{align*}
		\left| \int_{-\pi}^{\pi}
   		\varphi h_{j}(\theta_{j-1} + \varphi)
  	\,\d\varphi \right|
  	&\leq
  	\left|
  		\int_{\beta-D}^{\beta+D} \varphi
  		(
  			h_{j}(\theta_{j-1} + \varphi)
  			-
  			h_{j}(\theta_{j-1} - \varphi)
			)  			
  		\,\d\varphi
  	\right|
  	+
 	  \pi h_{j}(F_{j-1})\\
 	  &\leq
 	  \pi 1[\deadtime \leq \rdown{t/\cparam}]
 	  +
 	  (\beta + O(D)) A\cparam^{11/4}
 	  +
 	  A\cparam^4,
	\end{align*}
	so as with \eqref{mcleish-original-i},
	taking expectations it is simple to show that
	\eqref{mcleish-original-iii}
	tends to zero in $L^1$ and hence in probability
	as $\cparam \to 0$.
\end{proof}

\section{Alternative particle shapes}
\label{sec:alt-particles}

We believe that the results obtained above when using particles
of the form $(1, 1+d]$ can be extended to a more general family of particles.
In this case, depending on the form of the particles chosen,
we believe an SLE$_\kappa$
cluster can be obtained as the limit of an ALE$(0,\eta)$
for $\eta < -2$
for any $\kappa \in [4,\infty]$
(where SLE$_{\infty}$ is the growing disc
$t \mapsto e^t \overline{\D}$).

We will present below a few definitions
and statements to make this conjecture precise,
and some sketch arguments 
to support our claims.

\begin{definition}
	Let $\mathcal{P}$ be a family of subsets of $\Delta$,
	with $P \in \mathcal{P}$ if and only if:
	\begin{enumerate}[label=(\roman*)]
		\item $P \cup \overline{\D}$ is closed and bounded,
		\item for all $z \in P$, we have $z^* \in P$,
		\item $\overline{P} \cap \overline{\D} = \{ 1 \}$, and
		\item $P$ is convex.
	\end{enumerate}
	
	Note that for every $P \in \mathcal{P}$,
	there is a unique map $f^P : \Delta \to \Delta \setminus P$
	of the form $f^P(z) = e^\cparam z + O(1)$ near $\infty$
	for some $\cparam = \cparam(P) > 0$.
	As with the case $P = (1, 1+d]$
	there is also a unique $0 < \beta(P) < \pi$
	such that $f^P(e^{\pm i\beta(P)}) = 1$.
\end{definition}

Condition (iii) is necessary to obtain an SLE scaling result.
If the particle has a non-trivial base,
then the basepoints no longer sit in
increasingly deep ``fjords''
of low harmonic measure,
so the most recent basepoints
are no longer signficiantly more attractive
than the older basepoints.

Condition (iv) ensures the basepoints of each particle
are the areas of lowest harmonic measure.
For example the particle
$P_{\theta,\ell} = (1, 1 + e^{i\theta}\ell] \cup (1, 1 + e^{-i\theta}\ell]$
satisfies (i), (ii) and (iii),
but $(f^P)'$
has an additional singularity at $1$
as well as at $e^{\pm i \beta}$
if $0 < \theta < \pi$.
For certain values of $\theta$ the singularity at $1$
is in fact stronger than those at $e^{\pm i \beta}$.\\

Aside from particles of the form $(1, 1+d]$,
examples of particles in this family are discs $D_r$
of radius $r > 0$ and centre $1 + r$,
and line segments tangent to $\T$,
of the form
$T_{\ell} = [1 - i\ell, 1 + i\ell]$
for $\ell > 0$.

\begin{definition}
	Given a family $(P_\cparam)_{\cparam > 0}$
	of particles from $\mathcal{P}$,
	indexed by capacity so that $\cparam(P_\cparam) = \cparam$,
	we will call the family $\kappa$-\emph{stable}
	for $\kappa \in [0, \infty]$
	if $\beta(P_\cparam)^2 / \cparam \to \kappa$
	as $\cparam \to 0$.
\end{definition}

We can compute the maps $f^{D_r}$ and $f^{T_\ell}$
by elementary methods,
and so establish that both families are stable
and compute their respective $\kappa$s.
We write both maps here so that the reader
can satisfy themselves that they
have the same important properties as the map
$f^{(1,1+d]}$.

For $r > 0$ we have $\beta_r = \frac{\pi r}{1+r}$ and define
$m_r : \Delta \to \H$ by
\begin{align}
	m_r(z) = e^{i \beta_r}
	\frac{z - e^{-i \beta_r}}{z - e^{i \beta_r}},
\end{align}
and $\phi_r : \H \to \Delta \setminus D_r$ by
\begin{align}
	\psi_r(w) = \frac{\log w + i\beta}{\log w - i \beta},
\end{align}
where the logarithm is defined by $0 < \arg w < \pi$.
Then we have $f^{D_r} \colon \Delta \to \Delta \setminus D_r$
given by $f^{D_r} = \psi_r \circ m_r$.
It is then relatively easy to compute that the capacity of $D_r$,
$\cparam( D_r ) \sim \frac{1}{6} \pi^2 r^2$
and so (suitably reparameterised),
$(f^{D_r})_{r > 0}$ is $6$-stable.

The map for $T_\ell$ is somewhat more complicated.
Following the Schwarz--Christoffel computations in
\cite{tangent-solutions} (adapted for a symmetric tangent),
the calculations give rise to two quantities
as $\ell \to 0$:
$e_\ell \sim \ell$ (closely related to $\beta_{T_\ell}$)
and
$y_\ell = 2 - \frac{1}{6\pi} e_\ell^3 + o(e_{\ell}^3)$
(related to the capacity).
Using these,
we can define maps
$m_\ell : \Delta \to \H$,
$\psi_{\ell} : \H \to \H \setminus (\text{two arcs})$,
and
$\varphi_{\ell} : \H \setminus (\text{two arcs}) \to \Delta \setminus T_\ell$,
given by
\begin{align}
	m_\ell(z) &= i y_{\ell} \frac{z - 1}{z + 1},\\
	\psi_\ell(w) &= \frac{1}{2\pi} \log \left(\frac{w - e_\ell}{w + e_\ell}\right)
	- \frac{1 - e_\ell / \pi}{w},\\
	\varphi(\zeta)
	&=
	\frac{2\zeta + i}{2\zeta - i}.
\end{align}
Then $f^{T_\ell} = \varphi \circ \psi_\ell \circ m_\ell$.
Some calculations then give
$\beta_\ell^2 / \cparam(T_\ell) \sim \frac{12\pi}{\ell}$
as $\ell \to 0$,
so (again reparameterised by capacity),
$(T_\ell)_{\ell > 0}$
is $\infty$-stable.

\begin{figure}[h]
	\centering
	\begin{tikzpicture}
		\draw node (big-tangents) at (-1,0) {
			\includegraphics[width=0.30\textwidth,
											 trim = 0 0 0 0,
											 clip]{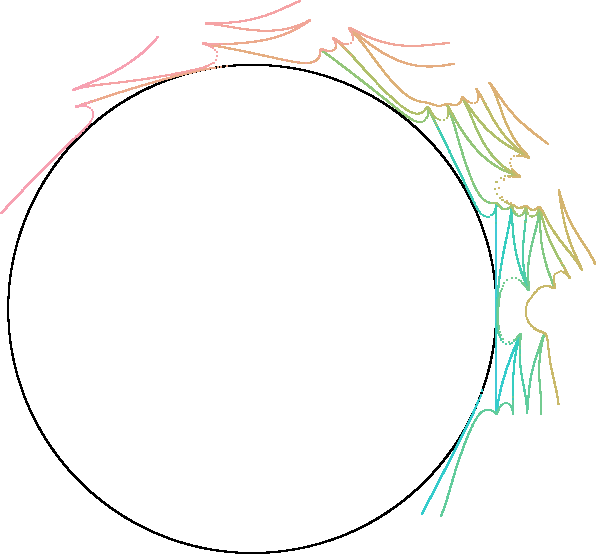}
		};
		\draw node (small-tangents) at (6.5,0) {
			\includegraphics[width=0.30\textwidth,
											 trim = 0 0 0 0,
											 clip]{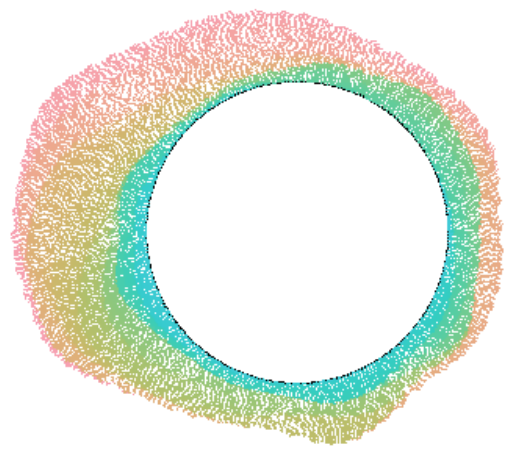}
		};
		\draw node (big-discs) at (0,-5) {
			\includegraphics[width=0.3\textwidth,
											 trim = 110 0 0 100,
											 clip,
											 angle=90]{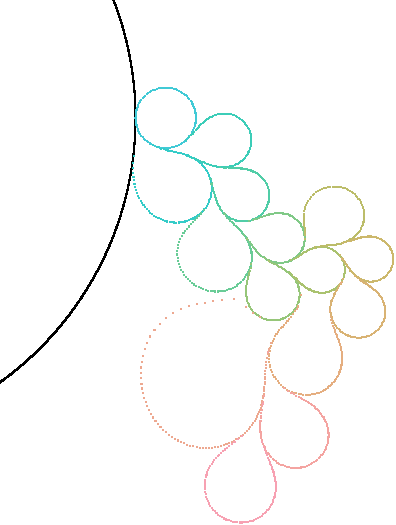}

		};
		\draw node (small-discs) at (6.5,-5) {
			\includegraphics[width=0.3\textwidth,
											 trim =130 0 0 80,
											 clip,
											 angle=90]{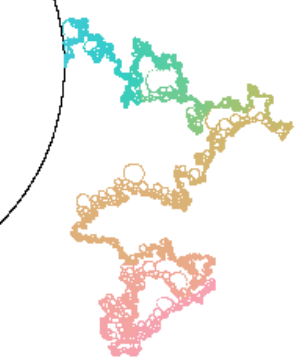}
		};	
	\end{tikzpicture}
	\caption{
	\label{fig:tangent}
		Clusters composed of
		tangent particles $T_\ell$ (top)
		and
		disc particles $D_r$ (bottom),
		generated with an angle sequence $\theta_k = \beta X_k$,
		for a simple symmetric random walk $X_k$,
		coloured according to the order of attachment
		(the earliest particles in blue and the latest in red).
    Note that these are \emph{not} simulations of an ALE process,
		but illustrations of what we conjecture their behaviour to be.
		For the tangent and disc particles (and even for the slit),
		the $\sigma$ necessary for convergence to an SLE
		is far too small
		to make simulating ALE practical
		in the regime this paper considers.
		The clusters on the right have 8,000 particles each
		and a total capacity around 0.2.
		The bottom-right cluster is close to an SLE$_6$,
		and the top-right cluster approximates
		an SLE$_\kappa$ with $\kappa$ around 377.
}
\end{figure}

Our main conjecture is that we have a version of
Proposition \ref{thm:driver-limit}
for every family of $\kappa$-stable particles,
and so the resulting cluster converges
in distribution
to an SLE$_{\kappa}$.

To grow most of the particles in $\mathcal{P}$
it is necessary to use Loewner's equation \eqref{eq:loewner} with
a driving \emph{measure} on $\T$ rather than a driving function.
We will not go into detail of this here,
but refer the reader to \cite{lawler-conformal-book}.
For a given particle $P$ with capacity $\cparam$,
we denote the driving probability measure
(evolving in time) by
$(\mu_t^P)_{0 \leq t \leq \cparam}$.

\begin{conjecture}
\label{driver-conjecture}
	Fix $T > 0$ and let $\eta < -2$.
	Suppose $(P_\cparam)_{\cparam > 0}$
	is a $\kappa$-stable family of particles
	from $\mathcal{P}$
	for $\kappa \in [4, \infty]$.
	Let $(\theta_n^\cparam)_{n \geq 1}$
	be the sequence of angles we obtain from the
	$\mathrm{ALE}(0,\eta)$
	process using particle $P_\cparam$
	and let $\sigma \leq c_0(P_{\cparam})$,
	some function which decays quickly as $\cparam \to 0$.
	
	Let $\deadtime
	= \inf\{ n \geq 2 : \min_{\pm}
	           |
	         \theta_n - (\theta_{n-1} \pm \beta_\cparam) > D \}$,
	where $D$ is a suitable function of $\sigma$ and $\cparam$.
	
	As $\cparam \to 0$,
	\[
		\P[ \deadtime \leq \rdown{T/\cparam} ] = O(\cparam^{\gamma})
	\]
	for some $\gamma > 1$.
	
	The driving measure for the whole cluster is
	$\d\xi_t^\cparam(\varphi)
	=
	\d\mu^{P_\cparam}_{t - \cparam\rdown{t/\cparam}}(
		\theta_{\rdown{t/\cparam}+1} + \varphi
	)$
	for $0 \leq t \leq T$.
	Then if $\kappa < \infty$,
	\[
		(\xi_t^{\cparam})_{t \in [0,T]}
		\to
		(\delta_{\sqrt{\kappa} B_t})_{t \in [0,T]}
		\text{ in distribution as } \cparam \to 0,
	\]
	as a random variable
	in the space of finite measures on $S = \T \times [0,T]$
	(equipped with the Wasserstein metric),
	and if $\kappa = \infty$
	then
	$(\xi_t^{\cparam})_{t \in [0,T]}$
	converges in the same sense
	to Lebesgue measure $\frac{1}{2\pi} \, \d\varphi \, \d t$
	on $S$.
\end{conjecture}


\begin{conjecture}[Generalisation of Theorem \ref{thm:main-result},
simple corollary of Conjecture \ref{driver-conjecture}]
	For $\eta, \sigma, \kappa$ and $(P_\cparam)_{\cparam > 0}$
	as in Conjecture \ref{driver-conjecture},
	let the $\mathrm{ALE}(0,\eta)$ cluster with $N = \rdown{T/\cparam}$
	particles of capacity $\cparam$ be $K_N^\cparam$.
	As $\cparam \to 0$, if $\kappa < \infty$
	then
	$K_N^\cparam$ converges in distribution
	as a random variable in $\mathcal{K}$
	to a radial SLE$_\kappa$ cluster of capacity $T$.
	If $\kappa = \infty$
	then $K_N^\cparam$ converges in $\mathcal{K}$
	to the disc $e^{T} \overline{\D}$.
\end{conjecture}

We believe the proof of Conjecture \ref{driver-conjecture}
is fairly straightforward for particles
where the map $f^{P_\cparam}$
is known explicitly,
such as $T_\ell$ and $D_r$.
As the support of $\mu_{t}^{P_\cparam}$
is $o(1)$ as $\cparam \to 0$,
proving convergence of the driving measure
is reduced to proving the angle sequence
approximates a symmetric random walk.
This follows quite simply if we can prove similar
bounds to those in
Theorem \ref{thm:step-convergence},
which we believe is simply a matter of
carefully verifying the type of explicit calculations
we were able to do for
$f^{(1,1+d]}$.

A proof for general $\kappa$-stable families
will require more generalised estimates of the maps
and their derivatives for particles in the class $\mathcal{P}$,
which we have not currently developed.

\begin{remark}
	One question which naturally arises
	is the significance of the $\kappa = 4$
	appearing in Theorem \ref{thm:main-result}
	for the slit particle.
	In fact we strongly believe
	that this is the minimal attainable $\kappa$
	for our ALE$(0,\eta < -2)$ models.
	Geometrically,
	slits $(1,1+d]$ are the only particles
	with ``zero width'',
	and $\kappa = 4$ marks a phase transition
	for SLE,
	since SLE$_4$ is a simple curve,
	and SLE$_\kappa$ for $\kappa > 4$
	is never a simple curve.
\end{remark}

\begin{proposition}
	For $0 \leq \kappa < 4$ there is no family of 
	$\kappa$-stable particles in $\mathcal{P}$.
\end{proposition}
\begin{proof}[Proof idea]
	First note that the family of slit particles
	$(Q_\cparam)_{\cparam > 0}
	= ((1,1+d(\cparam)])_{\cparam > 0}$
	is $4$-stable.
	For any particle $P \in \mathcal{P}$,
	we can express $(f^P)^{-1}$
	as the solution to the ``reverse'' Loewner equation
	with a symmetric driving measure,
	and then $e^{i\beta_P} = \lim_{\eps \downarrow 0} (f^P)^{-1}(e^{i\eps})$.
	An explicit calculation shows that
	if $P$ has capacity $\cparam$
	then $\beta_P \geq \beta_{Q_\cparam}$.
\end{proof}

\begin{remark}
	We are confident that an SLE$_\kappa$ can be realised
	as the limit of an ALE($0,\eta$) model
	for every $\kappa \in [4,\infty]$.
	For example, isoceles triangular particles joined
	to the circle at the apex, with vertex angle $\theta$,
	can interpolate between the slit particle
	$(1, 1+d]$ (the $\theta \to 0$ limit)
	and the tangent $T_\ell$ the $\theta \to \pi$ limit).
	We can therefore interpolate
	between $\kappa = 4$ and $\kappa = \infty$,
	realising every value in $(4,\infty)$
	as $\theta$ varies in $(0,\pi)$.
\end{remark}

\section*{Acknowledgements}
The author would like to thank Amanda Turner
for her guidance throughout the project,
and Vincent Beffara for his very useful comments
on an early version of the paper
about the $\eta = -\infty$ case.
I would also like to thank two anonymous referees
for detailed and helpful comments on the exposition,
and for the questions of one which led to
the inclusion of
\autoref{sec:alt-particles}.

\end{document}